\newtheorem*{Thm*}{Theorem}
\newtheorem{Thm}{Theorem}[section]
\newtheorem{Cor}[Thm]{Corollary}
\newtheorem{Prop}[Thm]{Proposition}
\newtheorem{Lemma}[Thm]{Lemma}
\theoremstyle{definition}
\newtheorem{Defn}[Thm]{Definition}
\newtheorem{Notation}[Thm]{Notation}
\newtheorem{Remark}[Thm]{Remark}
\newcommand{\mf}[1]{\mathbb{#1}}
\newcommand{\mc}[1]{\mathcal{#1}}
\DeclareMathOperator{\NC}{\mathrm{NC}}
\newcommand{\abs}[1]{\left\vert#1\right\vert}
\newcommand{\set}[1]{\left\{#1\right\}}
\newtheorem{theorem}{Theorem}[section]
\newtheorem{proposition}[theorem]{Proposition}
\newtheorem{corollary}[theorem]{Corollary}
\newcommand{\utimes}{\kern0.05em\buildrel{\times}\over{\rule{0em}{0.004em}}\kern-0.9em\cup \kern0.2em}
\newcommand{\sutimes}{\mathrel{\kern0em\buildrel{\mathsf{x}}\over{\rule{0em}{0.0em}} \kern-0.35em\cup\kern-0.0em}}
\title[Operator-Valued Monotone Convolution Semigroups]{Operator-Valued Monotone Convolution Semigroups and an Extension of the Bercovici-Pata Bijection.}
\author{Michael Anshelevich}
\thanks{The first author was supported in part by NSF grant DMS-1160849.}
\address{Department of Mathematics, Texas A\&M University, College Station, TX 77843-3368}
\email{manshel@math.tamu.edu}
\author{John D. Williams}
\address{Universit{\"a}t des Saarlandes, Fachrichtung Mathematik.  Postfach 151150, 66041. Saarbr{\"u}cken}
\email{williams@math.uni-sb.de}
\subjclass[2010]{Primary 46L53; Secondary 30D05, 47D06, 60E07}
\date{\today}
\begin{document}

\begin{abstract}
In a 1999 paper, Bercovici and Pata showed that a natural bijection between the classically, free and Boolean infinitely divisible measures held at the level of limit theorems of triangular arrays.  This result was extended to include monotone convolution by the authors in \cite{Ans-Williams-Chernoff}.  In recent years, operator-valued versions of free, Boolean and monotone probability have also been developed. Belinschi, Popa and Vinnikov showed that the Bercovici-Pata bijection holds for the operator-valued versions of free and Boolean probability.  In this article, we extend the bijection to include monotone probability theory even in the operator-valued case. To prove this result, we develop the general theory of composition semigroups of non-commutative functions and largely recapture Berkson and Porta's classical results on composition semigroups of complex functions in operator-valued setting. As a biproduct, we deduce that operator-valued monotonically infinitely divisible distributions belong to monotone convolution semigroups. Finally, in the appendix, we extend the result of the second author on the classification of Cauchy transforms for non-commutative distributions to the Cauchy transforms associated to more general completely positive maps.
\end{abstract}

\maketitle
\section{Introduction}
It is a remarkable fact that there are natural bijections between the classes of infinitely divisible measures in each of the four universal non-commutative probability theories, which not only arise from the L\`{e}vy-Hin\u{c}in representations of the measures, but are maintained at the level of limit theorems of triangular arrays.  This is made precise in the following theorem:
\begin{theorem}\label{scalartheorem}
Fix a finite positive Borel measure $\sigma$ on $\mathbb{R}$, a real number $\gamma$, a sequence of probability measures $\set{\mu_{n}}_{n \in \mf{N}}$, and a sequence of positive integers
$k_{1} < k_{2} < \cdots $. The following assertions are equivalent:
\begin{enumerate} \item \label{classical0}
(Classical / tensor) The sequence $\underbrace{\mu_{n} \ast \mu_{n} \ast \cdots \ast \mu_{n}}_{k_{n}}$ converges weakly to $\nu_{\ast}^{\gamma,\sigma}$;
 \item \label{free0}
(Free) The sequence $\underbrace{\mu_{n} \boxplus \mu_{n} \boxplus \cdots \boxplus \mu_{n}}_{k_{n}}$ converges weakly to $\nu_{\boxplus}^{\gamma,\sigma}$;
 \item \label{boolean0}
(Boolean) The sequence $\underbrace{\mu_{n} \uplus \mu_{n} \uplus \cdots \uplus \mu_{n}}_{k_{n}}$ converges weakly to $\nu_{\uplus}^{\gamma,\sigma}$;
 \item \label{monotone0}
(Monotone) The sequence $\underbrace{\mu_{n} \rhd \mu_{n} \rhd \cdots \rhd \mu_{n}}_{k_{n}}$ converges weakly to $\nu_{\rhd}^{\gamma,\sigma}$;
\item \label{integral0}
The measures
\[
k_{n}\frac{x^{2}}{x^{2}+1}d\mu_{n}(x) \rightarrow \sigma
\]
weakly, and
\[
\lim_{n \uparrow \infty} k_{n} \int_{\mathbb{R}} \frac{x}{x^{2} + 1}d\mu_{n}(x) = \gamma.
\]
\end{enumerate}
\end{theorem}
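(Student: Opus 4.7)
The plan is to use condition (e) as the central hub and establish each of (a)--(d) as equivalent to it. Three of these equivalences --- namely (a), (b), (c) --- constitute the original Bercovici-Pata theorem for classical, free and Boolean convolutions, which I would simply cite. The focus of the proposal is therefore on the equivalence (d) $\Leftrightarrow$ (e), which grafts monotone convolution onto this framework. The overall strategy for this piece is to linearize monotone convolution using the reciprocal Cauchy transform, then translate triangular-array convergence into the convergence of iterated self-compositions of holomorphic self-maps of the upper half-plane.

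Concretely, I would begin by invoking Muraki's formula $F_{\mu \rhd \nu}(z) = F_\mu(F_\nu(z))$, which implies $F_{\mu_n^{\rhd k_n}} = F_{\mu_n}^{\circ k_n}$. I would then identify the target $\nu_{\rhd}^{\gamma, \sigma}$ as the time-one map of a composition semigroup $\{F_t\}_{t \ge 0}$ of self-maps of the upper half-plane whose infinitesimal generator encodes the Nevanlinna data $(\gamma, \sigma)$; this is Muraki's monotone L\'evy-Hin\v cin representation. Writing $F_{\mu_n}(z) = z + H_n(z)$, I would show under hypothesis (e) that $k_n H_n$ converges, uniformly on compact subsets of the upper half-plane, to the infinitesimal generator of $\{F_t\}$. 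A Chernoff-type product formula for self-maps of the upper half-plane --- asserting that $(\mathrm{id} + H_n)^{\circ k_n}$ converges to $F_1$ whenever $k_n H_n$ converges to the generator --- then yields (d). For the converse (d) $\Rightarrow$ (e), I would use the Nevanlinna representation of $H_n$ (guaranteed by the fact that $z \mapsto -H_n(z)$ maps the upper half-plane to its closure), extract the asymptotic behavior of $k_n H_n$ from the convergence of the iterates, and read off the truncated moment conditions of (e) from the parameters of the limiting Nevanlinna kernel.

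The main obstacle is precisely the non-linear Chernoff product formula for $(\mathrm{id} + H_n)^{\circ k_n}$. Linearization is not available for monotone convolution as it is for the free, Boolean, or classical cases, so one cannot simply multiply a transform by $k_n$; instead, one must track how small perturbations of the identity accumulate under iteration. Passing from convergence of the generators $k_n H_n$ to convergence of the finite-time iterates requires uniform control on compact subsets of the upper half-plane together with estimates on the modulus of continuity of $F_{\mu_n}$ near the identity. The classical Berkson-Porta theory of composition semigroups on the unit disc provides the right analytic template, and translating those estimates into the half-plane picture via Nevanlinna-Herglotz representations is the technical heart of the argument.
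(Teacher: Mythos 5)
The paper does not actually reprove Theorem~\ref{scalartheorem}: it cites \cite{BerPatDomains} for the equivalence of \eqref{classical0}, \eqref{free0}, \eqref{boolean0}, \eqref{integral0}, and \cite{Ans-Williams-Chernoff} for the inclusion of \eqref{monotone0}. Your proposal is in substance a summary of the route taken in that second reference: linearize via $F_{\mu\rhd\nu}=F_\mu\circ F_\nu$, realize $\nu_\rhd^{\gamma,\sigma}$ as the time-one map of a Berkson--Porta composition semigroup whose generator encodes the Nevanlinna data $(\gamma,\sigma)$, and supply a Chernoff-type product formula to pass from convergence of $k_n(F_{\mu_n}-\mathrm{id})$ to convergence of the iterates $F_{\mu_n}^{\circ k_n}$. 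That is a correct strategy and it is the one the paper points to; no gap.

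What is worth flagging is that when the authors prove the operator-valued generalization of exactly this statement in Section~\ref{combinatorics}, they deliberately take a different route. Instead of composition semigroups and a Chernoff formula, they introduce the monotone cumulant functional $K^\mu$ with weights $o(\pi)/|\pi|!$ over non-crossing partitions, prove an order-statistics identity for these weights, and then show that for each of the four theories the triangular-array condition $k_i\mu_i|_{\mathcal B_0}\to\rho$ is equivalent to $k_i C^{\mu_i}\to\rho$ where $C$ is the corresponding cumulant functional (classical, free, Boolean, or monotone). This reduces all four limit theorems to the single moment condition \eqref{integral0} in one stroke, and completely sidesteps the uniform-on-compacta estimates, modulus-of-continuity bounds, and Nevanlinna-kernel extraction that your analytic argument needs. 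The trade-off is that the analytic route produces extra structure for free---the continuous semigroup and its generator $\Phi$, which is precisely what Section~\ref{analysis} of the paper is after---whereas the combinatorial route is shorter and transfers to the operator-valued setting without any additional analytic input. In the scalar case both work; if you intended eventually to handle the operator-valued case that is the paper's actual goal, the combinatorial method is the more economical one, and it is what the authors use there.
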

Here $\nu_\ast^{\gamma,\sigma}$, $\nu_\boxplus^{\gamma,\sigma}$, $\nu_\uplus^{\gamma,\sigma}$, $\nu_\rhd^{\gamma,\sigma}$ are probability measures defined explicitly through their complex-analytic transforms. The equivalence of \eqref{classical0}, \eqref{free0}, \eqref{boolean0} and \eqref{integral0} was proven in a by now classic paper due to Bercovici and Pata \cite{BerPatDomains}.  The inclusion of part \eqref{monotone0}  was proven in our recent paper \cite{Ans-Williams-Chernoff}.

Voiculescu developed operator-valued notions of non-commutative probability \cite{Voiculescu_Mult} where probability measures are replaced by certain completely positive maps from the ring of non-commutative polynomials over a C$^{\ast}$-algebra.
An analogous theorem in this more general setting, namely the equivalence of
parts \eqref{free0} and \eqref{boolean0}, was proven in \cite{Belinschi-Popa-Vinnikov-ID}.  The first main result in this paper is the inclusion of \eqref{monotone0}
at this level of generality.

In order to study monotone infinitely divisible $\mathcal{B}$-valued distributions, we must first develop the theory of composition semigroups of non-commutative functions in a manner analogous to Berkson and Porta's study of these semigroups at the level of complex functions \cite{BP78}.  This stems from the fact that the convolution operation for monotone probability theory satisfies the following relation for the associated $F$-transforms,
\[
F_{\mu \rhd \nu} = F_{\mu} \circ F_{\nu},
\]
so that  infinitely-divisible distributions form such a composition semigroup. In the second main result of the paper, we prove that any monotone infinitely-divisible distribution can be included in such a semigroup. Note that even in the scalar-valued case, this is a recent result, proved by Serban Belinschi in his thesis. Finally, we characterize generators of such composition semigroups, and a smaller set of generators of composition semigroups of $F$-transforms.

In Section \ref{prelim}, we provide background and  preliminary results.  In section \ref{analysis}, we study composition semigroups of vector-valued and non-commutative analytic functions.  The main results of this section are Proposition \ref{differentiate0}, which shows that there is a natural notion of a time derivative for semigroups of vector-valued analytic functions $\{f_{t}\}_{t\geq 0}$, and Theorem \ref{generation},  which proves that, in the case of $F$-transforms and more general self-maps of the complex upper half plane, these semi-groups are in bijection with certain classes of functions defined through their analytic and asymptotic properties.  This bijection provides a  L\`{e}vy-Hin\u{c}in  representation for these infinitely divisible distributions.
In section \ref{combinatorics} we prove the main result of the paper, namely the extension of Theorem \ref{scalartheorem} to the operator-valued case.  In contrast to the previous section, this is achieved through a combinatorial methodology.  We close the paper with the Appendix, which is primarily concerned with the extension of the main result in \cite{Williams-Analytic}, namely the classification of the Cauchy transforms associated to $\mathcal{B}$-valued distributions, to a more general class of functions including the Cauchy transforms associated to more general CP maps.

\section{Preliminaries}\label{prelim}
Let $\mathcal{B}$ denote a unital C$^{\ast}$-algebra and $X$ a self-adjoint symbol.  We will define the ring of noncommutative polynomials $\mathcal{B} \langle X \rangle$ as the algebraic free product of $\mathcal{B}$ and $X$. $\mathcal{B}_0 \langle X \rangle$ are polynomials in $\mathcal{B} \langle X \rangle$ with zero constant term.

\begin{Defn}
Let $\mu: \mathcal{B} \langle X \rangle \rightarrow \mathcal{B}$ denote a linear map.
We say that $\mu$ is \emph{exponentially bounded} with constant $M$  if
\begin{equation}\label{bound}
\| \mu(b_{1}Xb_{2} \cdots X b_{n+1} ) \| \leq M^{n} \|b_{1} \|  \|b_{2} \|  \cdots \|b_{n+ 1} \|
\end{equation}
We abuse terminology and say that the map $\mu$ is \textit{completely positive} (CP) if
\begin{equation}\label{CP}
(\mu \otimes 1_{n}) \left( \left[ P_{i}(X)P_{j}^{\ast}(X) \right]_{i,j = 1}^{n} \right) \geq 0
\end{equation}
for every family $P_{i}(X) \in \mathcal{B}\langle X \rangle$.

We define a set $\Sigma_{0}$ to be those $\mathcal{B}$-bimodular linear maps $\mu$ satisfying \eqref{bound} and \eqref{CP}.
\end{Defn}

For a general introduction to non-commutative functions, we refer to \cite{KVV}.
Throughout, $\mathcal{B} , \mathcal{A}$ shall denote unital C$^{\ast}$-algebras.
Let $M_{n}(\mathcal{B})$ denote the $n\times n$ matrices with entries in $\mathcal{B}$.
We define the \textit{noncommutative space over $\mathcal{B}$} to be the set $\mathcal{B}_{nc} = \{ M_{n}(\mathcal{B}) \}_{n=1}^{\infty}$.
A \textit{non-commutative set} is a subset $\Omega \subset \mathcal{B}_{nc}$
that respects direct sums.  That is, for $X \in \Omega \cap M_{n}(\mathcal{B})$ and $Y \in \Omega \cap M_{p}(\mathcal{B})$ we have that $X \oplus Y \in \Omega \cap M_{n+p}(\mathcal{B})$.
We note that these definitions apply to the more general case of $\mathcal{B}$ being any unital, commutative ring,  but we focus on the $C^{\ast}$-algebraic setting.  Given $b \in M_{n}(\mathcal{B})$, the \textit{non-commutative ball} of radius $\delta$ about $b$ is the set
$B_{\delta}^{nc}(b):= \sqcup_{k=1}^{\infty} B_{\delta}(\oplus^{k} b)$ where $ B_{\delta}(\oplus^{k} b) \subset M_{nk}(\mathcal{B})$ is the standard ball of radius $\delta$.

A \textit{non-commutative} function is a map $f : \Omega \rightarrow \mathcal{A}_{nc}$ with the following properties:
\begin{enumerate}
 \item $f(\Omega_{n}) \subset M_{n}(\mathcal{A})$
\item  $f$ respects direct sums : $f(X \oplus Y) = f(X) \oplus f(Y)$
\item $f$ respects similarities: For $X \in \Omega_{n}$ and $S \in M_{n}(\mathbb{C})$ invertible we have that
$$f(SXS^{-1}) = Sf(X)S^{-1} $$
provided that $SXS^{-1} \in \Omega_{n}$.
\end{enumerate}

A non-commutative function is said to be \textit{locally bounded in slices} if, for every $n$ and element $x \in \Omega_{n}$,
$f|_{\Omega_{n}}$ is bounded on some neighborhood of $x$ in the norm topology.  It is a remarkable fact originally due to Taylor (\cite{Taylor1}, \cite{Taylor2}) that a
non-commutative function that is G\^{a}teaux differentiable and locally bounded in slices is in fact analytic.
A non-commutative function is \textit{uniformly analytic} at $b \in M_{n}(\mathcal{B})$ if it is analytic and bounded on $B_{r}^{nc}(b)$ for some $r > 0$.

Let $M_{n}^{+,\epsilon}(\mathcal{B}) \subset M_{n}(\mathcal{B})$ denote those element $b\in M_{n}(\mathcal{B})$ with
$\Im{(b)} > \epsilon 1_{n}$ and $M_{n}^{+}(\mathcal{B}) = \cup_{\epsilon > 0} M_{n}^{+,\epsilon} $.
We form a non-commutative set
$$H^{+}(\mathcal{B}) = \sqcup_{n=1}^{\infty} M_{n}^{+}(\mathcal{B}) $$
and refer to this set as the \textit{non-commutative upper half plane}.

We define a family of sets in $H^{+}(\mathcal{B})$.
For $\alpha, \epsilon  > 0$ define a \textit{ non-commutative Stolz angle } to be
$$ \Gamma_{\alpha, \epsilon}^{(n)} := \{ b \in M_{n}^{+,\epsilon}(\mathcal{B}) : \Im{(b)} > \alpha \Re{(b)} \}. $$

Let $\mu \in \Sigma_{0}$.  We define the \textit{Cauchy transform} of $\mu$ to be the analytic, non-commutative function $G_{\mu} = \{G_{\mu}^{(n)} \}_{n=1}^{\infty}$ such that
$$ G_{\mu}^{(n)}(b):= (\mu \otimes 1_{n}) ((b-X\otimes 1_{n})^{-1}) : H^{+}(\mathcal{B}) \mapsto H^{-}(\mathcal{B}).$$
From this map, we may construct the \textit{moment generating function}, the \textit{F-transform}, the \textit{Voiculescu transform} and the $\mathcal{R}$-\textit{transform} respectively through the following equalities:
$$H^{(n)}(b) :=  G^{(n)}(b^{-1}) : H^{-}(\mathcal{B}) \mapsto H^{-}(\mathcal{B}) $$
$$ F^{(n)}(b) := G^{(n)}(b)^{-1} : H^{+}(\mathcal{B}) \mapsto H^{+}(\mathcal{B})$$
$$ \varphi_{\mu}^{(n)}(b) :=( F_{\mu}^{(n)})^{\langle -1 \rangle}(b) - b $$
$$ \mathcal{R}_{\mu}^{(n)}(b) := \varphi_{\mu}^{(n)}(b^{-1})$$
where the superscript $\langle -1 \rangle$ refers to the composition inverse.  We also note that the moment generating function extend to a neighborhood of $0$ for $\mu \in \Sigma_{0}$ and that the Voiculescu-transform is only defined on a subset of $H^{+}(\mathcal{B})$.
The following result, proven in \cite{Williams-Analytic} and \cite{Popa-Vinnikov-NC-functions}, classifies the $F$-transforms in terms of their analytic and asymptotic properties.

\begin{theorem}\label{nevanlinna}
Let $f = (f^{(n)}): H^{+}(\mathcal{B}) \rightarrow H^{+}(\mathcal{B})$ denote an analytic, noncommutative function.
The following conditions are equivalent.
\begin{enumerate}[(a)]
\item\label{nev1}  $f = F_{\mu}$ for some $\mu \in \Sigma_{0}$.
\item\label{nev2}  The noncommutative function $k = (k^{(n)})_{n=1}^{\infty}$ defined by $k^{(n)}(b) := (f^{(n)}(b^{-1}))^{-1}$ has
uniformly analytic extension to a neighborhood of $0$. Moreover, for any sequence $\{ b_{k} \}_{k \in \mathbb{N}}$ with $\|b_{k}^{-1} \| \downarrow 0$, $b_{k}^{-1}f^{(n)}(b_{k}) \rightarrow 1_{n}$ in norm.
\item\label{nev3} There exists an $\alpha \in \mathcal{B}$ and a $\sigma : \mathcal{B}\langle X \rangle \rightarrow \mathcal{B}$ which satisfies \eqref{bound} and \eqref{CP} such that, for all $n \in \mathbb{N}$,
$$f^{(n)}(b)  = \alpha 1_{n} + b - (\sigma \otimes 1_{n}) (b(1-Xb)^{-1}).$$
\end{enumerate}
Moreover, the map $\sigma$ in \eqref{nev3} is of the form $\sigma(P(X)) = \rho(XP(X)X)$ for $\rho$ such that its restriction to $\mathcal{B}_{0}\langle X \rangle$ is positive.
\end{theorem}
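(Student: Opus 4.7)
The plan is to establish the two equivalences (\ref{nev1}) $\Leftrightarrow$ (\ref{nev3}) and (\ref{nev1}) $\Leftrightarrow$ (\ref{nev2}), treating (\ref{nev3}) as the structural representation that links the analytic data on $f$ to the combinatorial data encoded by $\mu$.

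First I would establish (\ref{nev1}) $\Rightarrow$ (\ref{nev3}) by a direct moment computation. Starting from $G_\mu^{(n)}(b) = (\mu \otimes 1_n)((b - X \otimes 1_n)^{-1})$ and expanding the resolvent as $\sum_{k \geq 0} b^{-1}(Xb^{-1})^k$ in a non-commutative ball at infinity (valid by exponential boundedness of $\mu$), I extract $\alpha := \mu(X)$ as the linear coefficient. Setting $\sigma(P(X)) := \mu(XP(X)X)$ and inverting the series for $G_\mu$ yields the formula of (\ref{nev3}). Complete positivity of $\sigma$ follows from that of $\mu$ because $[XP_i(X)P_j^*(X)X]_{i,j}$ is a Gram-type matrix in $\mathcal{B}\langle X\rangle$, so $\rho := \mu$ is positive on $\mathcal{B}_0\langle X\rangle$ and $\sigma(P) = \rho(XPX)$; the bound (\ref{bound}) transfers from $\mu$ to $\sigma$ with the same constant.

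Next I would prove (\ref{nev3}) $\Rightarrow$ (\ref{nev1}) by reversing this construction. Given $\alpha \in \mathcal{B}$ and $\sigma$ satisfying (\ref{bound}) and (\ref{CP}), define $\mu: \mathcal{B}\langle X\rangle \to \mathcal{B}$ on monomials by $\mu(1) = 1_\mathcal{B}$, $\mu(X) = \alpha$, and $\mu(b_0 X b_1 X \cdots X b_k) = b_0 \sigma(b_1 X b_2 \cdots X b_{k-1}) b_k$ for $k \geq 2$, extended $\mathcal{B}$-bimodularly. Exponential boundedness of $\mu$ is inherited from that of $\sigma$. Complete positivity of $\mu$ reduces, via a Schur-complement argument on the matrix $[\mu(P_i P_j^*)]$, to positivity of the corresponding $\sigma$-matrix on those $P_i$ of positive degree, together with the scalar normalization on the constants. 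A direct manipulation of power series then verifies $F_\mu = f$.

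For (\ref{nev1}) $\Leftrightarrow$ (\ref{nev2}), observe that $k^{(n)}(b) = G_\mu^{(n)}(b^{-1})$ is the non-commutative moment generating function of $\mu$; exponential boundedness gives uniform convergence of its Taylor series on some $B_r^{nc}(0)$, and the asymptotic $b^{-1}f^{(n)}(b) \to 1_n$ reflects $\mu(1) = 1_\mathcal{B}$, yielding (\ref{nev2}). Conversely, uniform analyticity of $k$ at $0$ supplies Taylor coefficients which, combined with the normalization $b_k^{-1} f^{(n)}(b_k) \to 1_n$, determine a unique $\mathcal{B}$-bimodular linear map $\mu$. The main obstacle is establishing complete positivity of $\mu$ from the sole analytic hypothesis that $f$ maps $H^+(\mathcal{B})$ into itself: this is handled by testing $f$ on perturbations of the form $b - \varepsilon [P_i(X) P_j^*(X)]$ and reading off positivity of the requisite matrix moments from $\Im f(b) \geq 0$, a non-commutative Hamburger-type argument that leverages the direct-sum and similarity compatibility of non-commutative functions to lift positivity from scalar test vectors to every matrix level.
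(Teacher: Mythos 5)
The paper does not prove Theorem~\ref{nevanlinna}; it cites it from \cite{Williams-Analytic} and \cite{Popa-Vinnikov-NC-functions}, so there is no in-paper proof to compare against. Evaluating your proposal on its own merits, there are concrete errors.

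The identification $\alpha := \mu(X)$ and $\sigma(P) := \mu(XPX)$ in your step (\ref{nev1}) $\Rightarrow$ (\ref{nev3}) is incorrect, both in sign and in substance. Test it on the scalar point mass $\mu = \delta_a$: here $F_\mu(z) = z - a$, and the Nevanlinna form forces $\alpha = -a = -\mu(X)$ with $\sigma \equiv 0$, whereas your formulas give $\alpha = a$ and a nonzero $\sigma$ (integration against $a^2\delta_a$). More generally, the coefficient data in the representation $f(b) = \alpha + b - G_\sigma(b)$ are the \emph{Boolean cumulants} of $\mu$, not the shifted moments $\mu(X \cdot X)$: in the scalar case $\alpha = -m_1$, $\sigma(1) = m_2 - m_1^2 = \mathrm{Var}(\mu)$, $\sigma(X) = m_3 - 2m_1m_2 + m_1^3$, and so on. Consequently, the complete positivity of $\sigma$ is \emph{not} an immediate consequence of positivity of $\mu$ via a Gram-matrix argument; it is exactly the operator-valued Herglotz/Nevanlinna content of the theorem, which is nontrivial even for $n = 1$. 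The same confusion propagates into your (\ref{nev3}) $\Rightarrow$ (\ref{nev1}): the recipe $\mu(b_0 X b_1 \cdots X b_k) = b_0 \sigma(b_1 X \cdots b_{k-1}) b_k$ prescribes the Boolean cumulant functional of the target $\mu$, not its moment functional, so the $\mu$ you define is not the right one (and a check against the $F$-transform would already fail at the $1/z$ coefficient when $\alpha \neq 0$).

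The remaining step, (\ref{nev2}) $\Rightarrow$ (\ref{nev1}), is where the real work lies, and your sketch does not supply it. Reading off a $\mathcal{B}$-bimodular linear functional from the Taylor coefficients of $k^{(n)}$ is fine; the hard part is proving that this $\mu$ is \emph{completely positive} and exponentially bounded. Your proposal to ``test $f$ on perturbations $b - \varepsilon[P_i P_j^*]$ and read off positivity from $\Im f \geq 0$'' is not an argument — it is not clear what it means to test an analytic noncommutative map this way, and no mechanism for extracting matrix positivity from the upper-half-plane mapping property is given. For a sense of what is actually required, compare with the closely analogous Theorem~\ref{generalization} in the appendix of this paper (and Theorem~4.1 of \cite{Williams-Analytic}): one constructs $\sigma$ via noncommutative divided differences, proves the exponential bound via a Cauchy-estimate on a carefully chosen nilpotent matrix $B$, and establishes complete positivity via a reduction to scalar Cauchy transforms using states $f_{1,1}$, special matrix amplifications $C + \epsilon E_0$, and a Stieltjes/Hamburger argument on the resulting one-variable Cauchy transforms. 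None of this is captured by the proposal.
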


We will require several classical results in complex function theory to prove our results.
 Theorem 3.16.3 in \cite{Hille-Phillips} is a useful analogue of the classical
Cauchy estimates in complex analysis.  We also refer to this reference for an overview of the differential structure of vector valued functions, including the higher order derivative $\delta^{n}$ utilized below.
\begin{theorem}\label{Cauchy}
Let $f$ be G\^{a}teaux differentiable in $\mathcal{U}$ and assume that $\|f(x) \| \leq M$ for $x \in \mathcal{U}$.
Then $$\| \delta^{n}f(a;h) \| \leq Mn!$$
for $a + h \in \mathcal{U}$.
\end{theorem}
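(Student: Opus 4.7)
The plan is to reduce this vector-valued Cauchy estimate to the classical scalar estimate via the Hahn--Banach theorem, which is the standard strategy for transferring complex-analytic facts to the Banach-space setting. Fix $a \in \mathcal{U}$ and $h$ with $a + h \in \mathcal{U}$, and (as is implicit in the Hille--Phillips formulation) take $\mathcal{U}$ large enough that $a + \lambda h \in \mathcal{U}$ for all $\lambda$ in a neighborhood of the closed complex unit disk. Define the one-parameter slice $\phi(\lambda) := f(a + \lambda h)$. By hypothesis $\|\phi(\lambda)\| \leq M$ throughout this domain, and the G\^ateaux differentiability of $f$ translates directly into complex differentiability of $\phi$ in the scalar variable $\lambda$, so $\phi$ is a Banach-space-valued holomorphic function. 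Moreover, iterating the definition of $\delta$ gives the identification $\phi^{(n)}(0) = \delta^{n} f(a; h)$.

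Next, for each continuous linear functional $\ell$ on the target Banach space with $\|\ell\| \leq 1$, the composition $\ell \circ \phi$ is a scalar holomorphic function satisfying $|(\ell \circ \phi)(\lambda)| \leq M$ on the closed unit disk. The classical scalar Cauchy estimate at radius $1$ then yields
\[
\bigl|(\ell \circ \phi)^{(n)}(0)\bigr| \leq Mn!,
\]
and continuity of $\ell$ permits interchanging $\ell$ with $\tfrac{d^{n}}{d\lambda^{n}}$, so $|\ell(\delta^{n} f(a; h))| \leq Mn!$. Finally, the Hahn--Banach theorem supplies
\[
\|\delta^{n} f(a; h)\| = \sup_{\|\ell\| \leq 1} |\ell(\delta^{n} f(a; h))|,
\]
and combining with the previous display produces the desired bound $\|\delta^{n} f(a; h)\| \leq Mn!$.

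The only place that requires more than routine bookkeeping is the identification of the vector-valued holomorphic derivative $\phi^{(n)}(0)$ with the iterated G\^ateaux derivative $\delta^{n} f(a; h)$ and, in turn, the verification that G\^ateaux differentiability of $f$ on $\mathcal{U}$ really does produce complex differentiability of $\lambda \mapsto f(a + \lambda h)$. Both rest on the vector-valued analyticity theory in Hille--Phillips (in the spirit of Dunford's theorem: weak holomorphy implies strong holomorphy); once these are in hand, the Hahn--Banach reduction makes the vector estimate a one-line consequence of its scalar counterpart, so there is no genuine obstacle to the proof.
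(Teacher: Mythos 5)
The paper does not give a proof of this statement at all: it is quoted verbatim as Theorem~3.16.3 of Hille--Phillips and used as a black box, so there is no ``paper proof'' to compare against. Your argument is correct and is essentially the standard one, and it is also in the spirit of the Hille--Phillips development of vector-valued holomorphy (where the Cauchy integral formula is established via the weak-to-strong holomorphy machinery, and estimates are extracted through continuous functionals together with Hahn--Banach exactly as you do).

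Two small points worth being explicit about. First, your reduction needs the one-complex-dimensional disk $\set{a + \lambda h : \abs{\lambda} < 1}$ to lie inside $\mathcal{U}$, not merely the two points $a$ and $a+h$; as stated, the theorem in the paper is slightly imprecise, but you correctly identify this as implicit in the Hille--Phillips hypothesis, and in the paper's actual applications $\mathcal{U}$ is a norm ball centered at $a$ (or a set closed under such scalar contractions), so $a + h \in \mathcal{U}$ does imply $a + \lambda h \in \mathcal{U}$ for $\abs{\lambda} \le 1$. Second, one can work on the open unit disk only: the scalar estimate at radius $r < 1$ gives $\abs{(\ell \circ \phi)^{(n)}(0)} \le M n! / r^{n}$, and letting $r \uparrow 1$ yields the constant $Mn!$ without needing $\phi$ to extend past the closed disk. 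With these remarks your proof is complete and correct.
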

Further, theorem 3.17.17 in \cite{Hille-Phillips} provides Lipschitz estimates for analytic functions.
Indeed, for an analytic function $f$ that is locally bounded by $M(a)$ in a neighborhood of radius $r_{a}$, we have that
\begin{equation}\label{lipschitz}
\| f(y) - f(x) \| \leq \frac{2M(a) \| x - y \|}{r_{a} - 2 \| x - y \|}
\end{equation}

\begin{Notation}
We define a family $\Lambda$ of functions $\Phi : H^{+}(\mathcal{B}) \rightarrow \overline{ H^{-}(\mathcal{B}) }$
through the following properties:
\begin{enumerate}[(i)]
\item\label{omega1} The map $\mathcal{R}(b) := \Phi(b^{-1})$ has uniformly analytic continuation to a non-commutative ball about $0$ with $\mathcal{R}(b)^{\ast} = \mathcal{R}(b)$
\item\label{omega2}  For any sequence $\{ b_{k} \}_{k \in \mathbb{N}} \in \mathcal{B}$ with $\| b_{k} ^{-1}\| \downarrow 0$, we have that $b_{k}^{-1} \Phi(b_{k}) \rightarrow 0$.
\end{enumerate}

We also define a larger family of functions $\tilde{\Lambda}$ by replacing \eqref{omega1} and \eqref{omega2} with the following weaker conditions
\begin{enumerate}[(I)]
\item For any $\epsilon > 0$, $\Phi$ is uniformly bounded on $\sqcup_{n=1}^{\infty} M_{n}^{+,\epsilon}(\mathcal{B})$.
\item For any $\alpha , \epsilon > 0$ and a sequence $\{ b_{k} \}_{k \in \mathbb{N}} \in \Gamma^{(n)}_{\alpha , \epsilon} $ with $\| b_{k} ^{-1}\| \downarrow 0$, we have that $b_{k}^{-1} \Phi(b_{k}) \rightarrow 0$.
\end{enumerate}
\end{Notation}

\begin{Defn}
\label{Defn:Monotone-convolution}
Let $\mu , \nu \in \Sigma_{0}$.
We define the \textit{monotone convolution} to be the non-commutative operation $ (\mu , \nu) \mapsto  \mu \rhd \nu \in \Sigma_{0}$
defined implicitly though the equality
$$ F_{\mu \rhd \nu} := F_{\mu} \circ F_{\nu}.$$
Note that this definition uses Theorem~\ref{nevanlinna} in an essential way, to show that a composition of $F$-transforms is an $F$-transform. See Section~\ref{combinatorics} and references \cite{Popa-Monotonic-Cstar,Hasebe-Saigo-Monotone-cumulants,Popa-Conditionally-monotone,Hasebe-Saigo-Operator-monotone} for the relation between this definition and monotone independence of Muraki.
\end{Defn}

\begin{Defn}
\label{Defn:infinitely-divisible}
We say that $\mu$ is a  $\rhd$-infinitely divisible distribution if, for every $n$, there exists a distribution $\mu_{n} \in \Sigma_{0}$ such that
\begin{equation}\label{IDdef}
\mu = \underbrace{\mu_{n} \rhd \mu_{n} \rhd \cdots \rhd \mu_{n}}_{n \ times}
\end{equation}

We define a composition semigroup of $F$-transforms $\{ F_{t} \}_{t \in \mathbb{Q}^{+}}$ by letting
$F_{p/q} := F_{\mu_{q}}^{\circ  p}$ where $\mu = \mu_{q}^{\rhd q}$ for all $p,q \in \mathbb{N}$.
We will show in Theorem \ref{generation} that this semigroup extends to an $\mathbb{R}^{+}$ semigroup, which moreover is generated by a function $\Phi \in \Lambda$ in a sense that will be made specific.  Moreover, one of the main results in \cite{Williams-Analytic} is that the set $\Lambda$ is exactly the set of Voiculescu transforms associated to $\boxplus$-infinitely divisible distributions.  This is not a coincidence and will drive the main result of this paper.
\end{Defn}

\section{L\'{e}vy-Hin\u{c}in Representations for Semigroups of Non-Commutative Functions.}\label{analysis}

We begin this section with a result showing that the divisors of $\rhd$-infinitely divisible distributions maintain the same exponential bound.  A similar result can be proven in the combinatorial setting of Section~\ref{combinatorics} in an easier manner, but the bound is less sharp.

\begin{proposition}\label{exp}
Let $\mu$ denote a $\rhd$-infinitely divisible distribution with exponential bound $M$.  Then, for each $k$, the distribution $\mu_{k}$ satisfying
$\mu = \mu_{k}^{\rhd k}$ has exponential bound $M$.
\end{proposition}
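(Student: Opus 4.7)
The plan is to exploit the composition identity $H_\mu = H_{\mu_k}^{\circ k}$, where $H(b) := F(b^{-1})^{-1}$ is the moment generating function. This identity follows from the hypothesis $F_\mu = F_{\mu_k}^{\circ k}$ by a short induction using $H_{\mu_k}^{\circ j}(b) = F_{\mu_k}^{\circ j}(b^{-1})^{-1}$, and reduces the proposition to an analytic question about the Taylor coefficients of $H_{\mu_k}$ near $b = 0$.

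First I would set up the dictionary between exponential bound and coefficient bounds on $H$. The power series of $H_\mu^{(n)}$ at the origin is
$$H_\mu^{(n)}(b) = \sum_{j \geq 0} (\mu \otimes 1_n)(b(Xb)^j).$$
Using the matrix-unit construction $b = \sum_{i=1}^{n+1} e_{i,i+1} \otimes b_i \in M_{n+2}(\mathcal{B})$, a direct computation shows that $(\mu \otimes 1_{n+2})(b(Xb)^n)$ has a single nonzero entry, sitting in position $(1, n+2)$ and equal precisely to $\mu(b_1 X b_2 \cdots X b_{n+1})$. This makes the translation transparent: exponential bound on $\mu$ with constant $M$ is equivalent to the multilinear coefficient estimates $\|(\mu \otimes 1_n)(b(Xb)^j)\| \leq M^j \|b\|^{j+1}$ for all $n, j$ and matrix-valued $b$.

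The main step is then to propagate these coefficient bounds from $H_\mu$ to $H_{\mu_k}$. Since $\mu_k \in \Sigma_0$ by the definition of $\rhd$-infinite divisibility, $H_{\mu_k}$ is a priori analytic on some noncommutative ball about the origin with $H_{\mu_k}(0) = 0$ and derivative $1_n$ at zero. I would bootstrap on the radius of analyticity: let $R$ be the supremum of $r \leq 1/M$ on which the target bound $\|(\mu_k \otimes 1_n)(b(Xb)^j)\| \leq M^j \|b\|^{j+1}$ holds uniformly in $n$ and $j$, and suppose for contradiction that $R < 1/M$. For $b$ with $\|b\|$ slightly less than $R$, track the finite orbit $b, H_{\mu_k}(b), \ldots, H_{\mu_k}^{\circ k}(b) = H_\mu(b)$; the bound on $H_{\mu_k}$ inside radius $R$ combined with the Lipschitz estimate \eqref{lipschitz} keeps the iterates in a controlled region, and the bound on $H_\mu$ inherited from the hypothesis propagates backward through the chain to yield the required bound on $H_{\mu_k}$ just past $R$, contradicting maximality.

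The main obstacle is this final bootstrap, which is an inverse problem: recovering a bound on a single composition factor from a bound on its $k$-fold iterate. In the continuous semigroup setting (Theorem \ref{generation} later in this section) the infinitesimal generator sees each time slice directly and such a statement is almost automatic; here, however, we only have the discrete identity $H_{\mu_k}^{\circ k} = H_\mu$. The argument must therefore rely delicately on the normalization $H_{\mu_k}(0) = 0$ and $H_{\mu_k}'(0) = \text{id}$, ensuring iterates contract toward the origin where bounds are sharpest, together with the structural description of $F$-transforms provided by Theorem \ref{nevanlinna}.
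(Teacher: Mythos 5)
The proposal correctly reformulates exponential boundedness in terms of coefficient estimates for the moment generating function $H$ and correctly identifies the composition relation $H_\mu = H_{\mu_k}^{\circ k}$ as the central identity. However, the argument has a genuine gap at precisely the point you flag as ``the main obstacle'': the claimed bootstrap on the radius of analyticity is never actually carried out, and I do not believe it can be made to work with only the tools you invoke. Two concrete problems. First, the bound you want is the \emph{sharp} constant $M$, not $M + \varepsilon$ or $CM$, and nothing in a radius-of-convergence argument for formal power series produces a sharp constant: knowing $H_{\mu_k}^{\circ k}$ is analytic with good coefficient estimates on $B_{1/M}(0)$ does not on its own constrain the radius of analyticity of an individual factor $H_{\mu_k}$. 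Singularities of a composition $h^{\circ k}$ are located at preimages under $h^{\circ j}$ of singularities of $h$, and these preimages may well all lie outside $B_{1/M}(0)$ even when $h$ itself has a singularity inside. Second, the heuristic that ``iterates contract toward the origin'' is false near $0$: since $H_{\mu_k}(0)=0$ with derivative $\mathrm{id}$, iterates are nearly stationary near $0$, not contracting, so there is no mechanism pushing the orbit into the region where your bounds are assumed.

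What makes the proposition true is structural information that your proposal does not use. The paper's proof reduces to scalar Cauchy transforms by applying a state $\phi$ after conjugating by a carefully built matrix $B_{\delta,\varepsilon}$ (chosen so that the $(1,1)$-entry of a high power picks out a specified monomial up to $O(\max(\delta,\varepsilon))$), and then invokes Stieltjes inversion: if $\mu_k$ violated the bound, the resulting scalar measure would have mass outside $[-M,M]$, so $\Im G^{\delta,\varepsilon}_{\mu_k}(x+it)$ would not vanish as $t\downarrow 0$ at some $x>M$. On the other hand, because $F_\mu = F_{\mu_k}^{\circ (k-1)}\circ F_{\mu_k}$ and $F$-transforms increase the imaginary part, one gets the inequality $\Im F^{(n)}_{\mu_k}(b) \leq \Im F^{(n)}_{\mu}(b)$, and the right-hand side has boundary imaginary part tending to $0$ outside $[-M,M]$ because $\mu$ does satisfy the bound. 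That Pick-function monotonicity, $\Im F_{\mu_k} \leq \Im F_\mu$, is the engine of the proof, and it is exactly the piece your proposal lacks. Your final sentence gestures at Theorem \ref{nevanlinna} and the normalization of $H$, but without the half-plane self-map inequality and the Stieltjes inversion step there is no way to conclude. If you wish to pursue a proof along your lines, you would at minimum need to import the inequality $\Im F_{\mu_k} \leq \Im F_\mu$ explicitly and translate it into a statement about boundary values, which is in effect the paper's argument.
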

\begin{proof}
Let $Xb_{1}Xb_{2} \cdots b_{n-1} X = Q(X) \in \mathcal{B}\langle X \rangle$ such that $\|b_{1} \| = \|b_{2} \| = \cdots \| b_{n-1} \| = 1$ and assume, for the sake of contradiction, that $\| \mu_{k}(Q(X))\|  > M^{n}$.  Then, using the Schwarz inequality for $2$-positive maps, we have that
\begin{align*}
\| \mu_{k}(Q^{\ast}(X)Q(X) )\| \| \mu_{k}(1) \|  & \geq \| \mu_{k}(Q(X)) \mu_{k}(Q^{\ast}(X) ) \| \\ &
 = \| \mu_{k}(Q(X)) \|^{2}  > M^{2n}
\end{align*}
Since $\mu_{k}(1) = 1$, we may assume that our monomial  $P(X) = X b_{1}Xb_{2} \cdots b_{n-1} X^{2}  b_{n -1}^{\ast}  X \cdots b_{1}^{\ast}X$ has the property that $\mu_{k}(P(X)) > M^{2n}$.
% I am arguing through contradiction.
Define an element $B \in M_{2n}(\mathcal{B})$ by
$$ B = \left( \begin{array} {ccccccccc}
0 &1 & 0 & 0 & 0 & 0 & 0 & \cdots & 0 \\
1 & 0 & b_{1} & 0 & 0 & 0 & 0 & \cdots & 0 \\
0 & b_{1}^{\ast} & 0 & 1 & 0 & 0 & 0 & \cdots & 0 \\
0 & 0 & 1 & 0 & b_{2}  & 0 & 0 & \cdots & 0 \\
0 & 0 & 0 & b_{2}^{\ast} & 0  & 1 & 0 & \cdots & 0 \\
0 & 0 & 0 & 0 & 1 & 0 & b_{3}  & \cdots & 0 \\
\ & \vdots & \ & \ & \vdots & \ & \ & \ & \vdots \\
0 & 0 & 0 & 0 &   0 & \cdots & b^{\ast}_{n-1} & 0 &1\\
0 & 0 & 0 & 0 & 0 & \cdots & 0  & 1 & 0
\end{array} \right) .$$
That is, the superdiagonal alternates between $1$ and $b_{i}$, the subdiagonal alternates between  $1$ and $b_{i}^{\ast}$.
Now, let $0 < \epsilon , \delta$ and
$$ B_{\delta , \epsilon} = \delta B + \epsilon \left( \sum_{i=1}^{2n -1} e_{i,i} \right) + \frac{e_{2n  , 2n}}{\delta^{n -1}}$$
where $\epsilon$ is arbitrarily small and $\delta$ is chosen so that $B_{\delta , \epsilon}$ is a strictly positive element.
Moreover, we have that
\begin{align}\label{powereq}
e_{1,1} ( B_{\delta , \epsilon}  (X \otimes 1_{2n}) B_{\delta, \epsilon})^{2n}e_{1,1} & = e_{1,1}  B_{\delta , \epsilon}  [(X \otimes 1_{2n}) B_{\delta, \epsilon}^{2}]^{2n - 1}  (X \otimes 1_{2n}) B_{\delta, \epsilon} e_{1,1} \\  & = P(X) + O(\max{(\delta,\epsilon} )). \nonumber \end{align}
To see this, note that a non-trivial contribution to  \eqref{powereq} must be of the form
$$ b_{1,2} X b_{2,j_{3}} b_{j_{3}, j_{4}} X b_{j_{4} , j_{5}} X \cdots b_{j_{4n - 2} , j_{4n - 1} } b_{j_{4n - 1}, 2} X b_{2,1}  $$
where $b_{i,j}$ denotes the $i,j$ entry of $B_{\delta , \epsilon}$.
Now, such a non-zero term is \textit{not} $O(\max{(\delta,\epsilon} ))$
means that $b_{j_{\ell} ,j_{\ell + 1}}$ must equal $b_{2n,2n}$ for two distinct $\ell$.  However, the only possible way for this to occur is if $j_{k} = k $ for $k = 2 , \ldots , 2n $, $j_{2n} = j_{2n + 1} = j_{2n +2} = 2n$ and $j_{p} = 4n + 2- p $ for $p = 2n + 2 , \ldots , 4n - 1$.

By assumption, there exists a state $\phi \in \mathcal{B}^{\ast}$ such that $\phi(\mu_k(P(X))) > M^{2k}$. Thus, for $\epsilon$ small enough, we have that
\begin{equation}
\phi_{1,1} \circ (\mu_{k}\otimes 1_{2n}) ((  B_{\delta , \epsilon} (X \otimes 1_{2n}) B_{\delta, \epsilon})^{2n}) > M^{2n}
\end{equation}
(here $\phi \otimes e_{1,1} = \phi_{1,1}$).
 This implies that  the scalar valued Cauchy transform associated to this random variable,
$$G_{\mu_{k}}^{\delta, \epsilon}(z) = \phi_{1,1} \circ (\mu_{k}\otimes 1_{2n}) ((z1_{2n} - B_{\delta,\epsilon}  (X \otimes 1_{2n})B_{\delta, \epsilon})^{-1})$$
arises from a measure whose support has non-trivial intersection with $\mathbb{R} \setminus [-M,M]$, whereas the (similarly defined)
$G_{\mu}^{\delta , \epsilon}$ has support contained in $[-M, M]$ (since its moments have growth rate smaller than powers of $M$).
Using Stieltjes inversion, this implies that
\begin{equation}\label{continue}
\lim_{t \downarrow 0} -\Im{ G_{\mu_{k}}^{\delta, \epsilon}(x + it)} > 0
\end{equation}
for some $x > M$ (or the limit simply does not exist in the atomic case).

Calculating the imaginary part of this Cauchy transform, we have
\begin{align}
\Im( [\mu_{k} ((z1_{2n} - B_{\delta, \epsilon}XB_{\delta, \epsilon})^{-1})]^{-1} ) & = B_{\delta, \epsilon}^{-1} \Im([\mu_{k}(B_{\delta, \epsilon}^{-2}z - X)^{-1}]^{-1})B_{\delta, \epsilon}^{-1} \nonumber \\
& =  B_{\delta, \epsilon}^{-1} \Im F^{(n)}_{\mu_{k}}(zB_{\delta, \epsilon}^{-2})   B_{\delta, \epsilon}^{-1} \nonumber \\
& \leq B_{\delta, \epsilon}^{-1} \Im F^{(n)}_{\mu}(zB_{\delta, \epsilon}^{-2})   B_{\delta, \epsilon}^{-1} \nonumber \\ \label{imag}
& = \Im( [\mu ((z1_{2n} - B_{\delta, \epsilon}XB_{\delta, \epsilon})^{-1})]^{-1}  )
\end{align}
where the inequality follows from the fact that $F_{\mu} = F_{\mu_{k}}^{\circ k-1} \circ F_{\mu_{k}}$ and $F$-transforms increase the imaginary part.

Rewriting the right hand side of \eqref{imag}, we have that
\begin{align}
\Im( &  [\mu ((z1_{2n} - B_{\delta, \epsilon}XB_{\delta, \epsilon})^{-1})]^{-1}) \nonumber \\  \nonumber
& = [\mu ((z1_{2n} - B_{\delta, \epsilon}XB_{\delta, \epsilon})^{-1}) ^{\ast}]^{-1}
\Im( \mu ((z1_{2n} - B_{\delta, \epsilon}XB_{\delta, \epsilon})^{-1}))
  [\mu ((z1_{2n} - B_{\delta, \epsilon}XB_{\delta, \epsilon})^{-1})]^{-1} \\
&= F_{\mu}^{\delta, \epsilon}(z)^{\ast} \Im{ ( F_{\mu}^{\delta, \epsilon}(z) )}  F_{\mu}^{\delta, \epsilon}(z)
\end{align}
We conclude that
\begin{equation}\label{Ftraneq}
\Im( [\mu ((z1_{2n} - B_{\delta, \epsilon}XB_{\delta, \epsilon})^{-1})]^{-1}) \leq
F_{\mu}^{\delta, \epsilon}(z)^{\ast} \Im{ ( F_{\mu}^{\delta, \epsilon}(z) )} F_{\mu}^{\delta, \epsilon}(z).
\end{equation}
Since $ F_{\mu}^{\delta, \epsilon}$ extends to $\mathbb{R} \setminus [-M,M]$
$$ \lim_{t \downarrow 0}  G_{\mu}^{\delta, \epsilon}(x + it)$$
converges to a positive element in $\mathcal{B}$ and
$$ \lim_{t \downarrow 0} \Im{ ( F_{\mu}^{\delta, \epsilon}(x + it) )} \rightarrow 0$$
it follows that the right hand side or \eqref{Ftraneq} converges to $0$ in norm, contradicting \eqref{continue}.  This completes our proof.
\end{proof}

\begin{proposition}\label{convergence}
Let $\mu, \mu_k$ be as in the preceding proposition.
We have that $F_{\mu_{k}} \rightarrow Id$ in norm as $k\uparrow \infty$ uniformly on $M_{n}^{+,\epsilon}(\mathcal{B})$, and this convergence is also uniform over $n$ .
Moreover,
the functions $F^{(n)}_{\mu_{k}}(b^{-1}) - b^{-1}$ and $F^{(n)}_{\mu_{k}}(b^{-1}) ^{-1}$
extend analytically to $B^{nc}_{r}(0)$, where the radius $r$ is dependent only on $M$ from Proposition~\ref{exp},  and satisfy
\begin{equation}\label{nbhdinf1}
F^{(n)}_{\mu_{k}}(b^{-1}) - b^{-1} \rightarrow 0_{n}
\end{equation}
\begin{equation}\label{nbhdinf2}
F^{(n)}_{\mu_{k}}(b^{-1}) ^{-1} = H^{(n)}_{\mu_{k}}(b) \rightarrow b
\end{equation}%\label{omegaH}
where this convergence is uniform  on $B^{nc}_{r}(0)$.
\end{proposition}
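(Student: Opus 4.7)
The plan is to combine Proposition~\ref{exp} (uniform exponential bound $M$ for every $\mu_k$) with the functional equation $F_\mu = F_{\mu_k}^{\circ k}$, which via $H^{(n)}(b)=F^{(n)}(b^{-1})^{-1}$ translates to $H_\mu = H_{\mu_k}^{\circ k}$ (since $F_{\mu\rhd\nu}=F_\mu\circ F_\nu$ yields $H_{\mu\rhd\nu}=H_\mu\circ H_\nu$ by a direct computation). First I would establish the analytic extensions. The exponential bound yields the series expansion
$$H_{\mu_k}^{(n)}(b)=\sum_{m=0}^\infty (\mu_k\otimes 1_n)\bigl(b(Xb)^m\bigr),$$
whose $m$-th term has norm at most $M^m\|b\|^{m+1}$, so the series converges absolutely and uniformly on $B_r^{nc}(0)$ for any $r<1/M$, with
$$\bigl\|H_{\mu_k}^{(n)}(b)-b\bigr\|\le\frac{M\|b\|^2}{1-M\|b\|},$$
a bound uniform in both $k$ and $n$. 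This handles the extension in \eqref{nbhdinf2}. Writing $F_{\mu_k}^{(n)}(b^{-1})-b^{-1}=H_{\mu_k}^{(n)}(b)^{-1}\bigl(b-H_{\mu_k}^{(n)}(b)\bigr)b^{-1}$ and inverting $b^{-1}H_{\mu_k}^{(n)}(b)$ by Neumann series on a slightly smaller ball gives the analytic extension in \eqref{nbhdinf1}, with radius depending only on $M$.

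To obtain $H_{\mu_k}^{(n)}\to\mathrm{Id}$ uniformly on $B_{r'}^{nc}(0)$ for $r'<r$ (and hence $F_{\mu_k}^{(n)}(b^{-1})\to b^{-1}$), I would show that every noncommutative moment $\mu_k(b_0Xb_1\cdots Xb_m)$ with $m\ge 1$ tends to $0$ as $k\to\infty$. This follows by matching Taylor coefficients in $H_\mu = H_{\mu_k}^{\circ k}$: the scalar prototype is
$$H_{\mu_k}^{\circ k}(z)=z+k\,\mu_k(X)\,z^2+\bigl(k\,\mu_k(X^2)+k(k-1)\mu_k(X)^2\bigr)z^3+\cdots,$$
forcing $\mu_k(X)=\mu(X)/k\to 0$ and then $\mu_k(X^m)\to 0$ by induction on $m$. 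The same recursion runs in the $\mathcal{B}$-valued setting, yielding vanishing of every noncommutative moment of order $\ge 1$. Combined with the majorant $M^m\|b\|^{m+1}$ controlling the tail of the series, a Weierstrass $M$-test argument gives uniform convergence on $B_{r'}^{nc}(0)$.

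For the uniform convergence $F_{\mu_k}\to\mathrm{Id}$ on $M_n^{+,\epsilon}(\mathcal{B})$, the previous step already covers the region $\{b\in M_n^{+,\epsilon}(\mathcal{B}):\|b^{-1}\|<r\}$. To propagate to the rest of $M_n^{+,\epsilon}(\mathcal{B})$, I would use that each $F_{\mu_k}-\mathrm{Id}$ has non-negative imaginary part on $H^+(\mathcal{B})$ (by Theorem~\ref{nevanlinna}) and is locally uniformly bounded on $M_n^{+,\epsilon}(\mathcal{B})$; the family $\{F_{\mu_k}^{(n)}-\mathrm{Id}\}$ is therefore normal, and any subsequential limit that vanishes on an open subset must vanish identically by the noncommutative identity theorem, giving locally uniform convergence of the whole sequence to $0$. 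The bounds depend only on $\epsilon$ and $M$, hence are $n$-independent.

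The main obstacle is the operator-valued coefficient-matching in the second step: the composition $H_{\mu_k}^{\circ k}$ generates many mixed noncommutative monomials, and tracking how the $1/k$-scaling cascades through the $\mathcal{B}$-valued moment structure requires care. Proposition~\ref{exp} supplies exactly the uniform control on higher-order remainders needed to close the induction on moment order.
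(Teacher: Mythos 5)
Your plan breaks into three pieces. On the first (analytic extension of $H_{\mu_k}^{(n)}$ and $F_{\mu_k}^{(n)}(b^{-1})-b^{-1}$ to a ball of radius depending only on $M$), you are in essence doing what the paper does: expand $H_{\mu_k}^{(n)}(b)=\sum_m(\mu_k\otimes 1_n)(b(Xb)^m)$, use Proposition~\ref{exp} to majorize term-by-term, and invert by a Neumann series. That part is fine.

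On the second piece (showing the moments of $\mu_k$ of degree $\geq 1$ tend to $0$), you take a genuinely different route from the paper. You propose to match coefficients in $H_\mu=H_{\mu_k}^{\circ k}$ and push through an induction; in the operator-valued setting this is essentially the statement that $K^\mu = k\,K^{\mu_k}$ for the monotone cumulants, together with the continuity Lemma~\ref{Lemma:Continuity}, and it does close once you have the uniform exponential bound to control remainders. The paper instead uses the Nevanlinna representation $F_{\mu_k}^{(n)}(b)=\alpha_k\otimes 1_n+b-G_{\rho_k}^{(n)}(b)$, derives $\rho_k(1)=\mu_k(X^2)=\rho(1)/k$ by a derivative-at-zero argument via Theorem~\ref{generalization}, and then kills all higher moments in one blow with the Schwarz inequality for $2$-positive maps. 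The paper's route is analytically slicker and gives the explicit $O(1/k)$ rate used later; yours is more self-contained but heavier combinatorially (it effectively rediscovers the cumulant machinery of Section~\ref{combinatorics}).

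The third piece is where there is a genuine gap. You want to pass from convergence on a small noncommutative ball to uniform convergence on $M_n^{+,\epsilon}(\mathcal{B})$ (uniformly in $n$) by a normal-families-plus-identity-theorem argument. There is no Montel theorem for analytic maps between infinite-dimensional Banach spaces: a uniformly bounded family of $\mathcal{B}$-valued analytic functions need not have any locally uniformly convergent subsequence, because bounded subsets of $\mathcal{B}$ are not precompact. So the asserted normality and the extraction of a subsequential limit that you then kill by the identity theorem simply do not go through, and nothing in your argument supplies the uniformity in $n$. The paper avoids this entirely: the Nevanlinna representation gives the pointwise bound
\[
\|F_{\mu_k}^{(n)}(b)-b\|\leq \|\alpha_k\|+\|(\rho_k\otimes 1_n)((b-X\otimes 1_n)^{-1})\|\leq \frac{\|\alpha\|}{k}+\frac{\|\rho_k(1)\|}{\epsilon}=\frac{\|\alpha\|+\rho(1)/\epsilon}{k},
\]
valid on all of $M_n^{+,\epsilon}(\mathcal{B})$ and manifestly uniform in $n$. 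To repair your proof you would need either this representation (or an equivalent global estimate), or to explicitly chain Cauchy estimates along paths inside $M_n^{+,\epsilon}(\mathcal{B})$ while keeping careful track of how the bounds scale with $n$; the normal-family shortcut is not available here.
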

\begin{proof}
Consider the Nevanlinna representations of each of these functions
\begin{equation}\label{revision}  F_{\mu_{k} }^{(n)}(b) = \alpha_{k} \otimes 1_{n} + b - G_{\rho_{k}}^{(n)}(b)  \end{equation}
defined in  Theorem~\ref{nevanlinna} , where we have adopted the notation that $\mu = \mu_{1}$.  We claim that the distributions  $\rho_{k}$ share a common exponential bound $N$ for all $k \in \mathbb{N}$.

To prove this claim, first observe that, by Theorem 4.1 in \cite{Williams-Analytic}, there exist distributions  $\nu_{k}$ such that
$$ b - F_{\mu_{k}}^{(n)}(b) = \varphi^{(n)}_{\nu_{k}}(b) = - \alpha_{k} \otimes 1_{n} + G_{\rho_{k}}^{(n)}(b).$$
Moreover, it was shown in \cite{Popa-Vinnikov-NC-functions} that if the $\nu$ and the $\nu_{k}$ have a common exponential bound $N$ then the distributions $\rho$ and $\rho_{k}$ have a common exponential bound $N^{2} + 1$.  Focusing on the $\nu_{k}$, we may manipulate equations \ref{revision} to conclude that
\begin{equation}\label{RTrans}
\mathcal{R}_{\nu_{k}}(b^{-1} ) = \varphi_{\nu_{k}}(b) = b^{-1} - F_{\mu_{k}}(b^{-1}).
\end{equation}

Now, expand the moment series
\begin{equation}\label{momentexpansion}
F^{(n)}_{\mu_{k}}(b^{-1})^{-1}  =  H^{(n)}_{\mu_{k}}(b) = \sum_{p=0}^{\infty} \mu_{k}((bX)^{p}b). \end{equation}
Note that Proposition~\ref{exp} implies that this function is convergent and uniformly bounded for $b \in B^{nc}_{r}(0)$, independent of $k$.

Observe that the moment generating function satisfies
\begin{equation}\label{momentexpansion2} [ H^{(n)}_{\mu_{k}}(b)]^{-1} = b^{-1} - \mu_{k}(X) + \mu_{k}(X)b\mu_{k}(X) - \mu_{k}(XbX) + \cdots = b^{-1} + f^{(n)}(b,X)\end{equation}
where $ f^{(n)}(b,X)$ is analytic in $b$ and converges for $\|b \|$ small, where the radius of convergence is only dependent on $M$.
Thus, $[ H^{(n)}_{\mu_{k}}(b)]^{-1} - b^{-1}$ extends to a neighborhood of $0$ whose radius is independent of $n$ and $k$ and agrees with $F^{(n)}_{\mu_{k}}(b^{-1}) - b^{-1}$ when $b$ is invertible.
Moreover, these observations, combined with \eqref{RTrans} imply that the functions $\mathcal{R}_{\nu_{k}}$ have a common $R , C > 0$ such that the functions extend to a common domain
$B^{nc}_{R}(0)$ with a common bound $C$.  Now a careful look at the Kantorovich argument in part \textrm{II} of the proof of Theorem $4.1$ in \cite{Williams-Analytic} allows us to conclude that the exponential bound on the distributions $\nu_{k}$ depend only on $R$, proving our claim.

Recall that $F_{\mu_{k}} \circ \cdots \circ F_{\mu_{k}} = F_{\mu}$
we have that
\begin{equation}
G_{\rho}^{(n)}(b) =  G_{\rho_{k}}^{(n)}(b) +  G_{\rho_{k}}^{(n)} \circ F_{\mu_{k} }^{(n)}(b)+ \cdots +  G_{\rho_{k}}^{(n)} \circ \underbrace{ F_{\mu_{k} }^{(n)}\circ \cdots \circ F_{\mu_{k} }^{(n)}}_{k - 1 \ times} (b)
\end{equation}
Letting $b = z1_{n}$ for $z \in \mathbb{C}$, we have that
\begin{align*}
\lim_{|z| \uparrow \infty} zH_{\rho}^{(n)} \left(\frac{1}{z}1_{n} \right) & = \lim_{|z| \uparrow \infty} zG_{\rho}^{(n)}(z1_{n})  \\
& = \lim_{|z| \uparrow \infty}  \sum_{\ell = 1}^{k-1} z G_{\rho_{k}}^{(n)} \circ (F_{\mu_{k} }^{(n)})^{\circ \ell} (z1_{n}) \\
& = \lim_{|z| \uparrow \infty} \sum_{\ell = 1}^{k-1} z H_{\rho_{k}}^{(n)} \left( [(F_{\mu_{k} }^{(n)})^{\circ \ell} (z1_{n})]^{-1} \right) \\
& = \lim_{|z| \uparrow \infty} \sum_{\ell = 1}^{k-1} z H_{\rho_{k}}^{(n)} \circ G_{\nu_{\ell}}^{(n)}\left( z 1_{n} \right) \\
& = \lim_{|w| \downarrow 0} \sum_{\ell = 1}^{k-1} \frac{1}{w} H_{\rho_{k}}^{(n)} \circ G_{\nu_{\ell}}^{(n)}\left( \frac{1}{w} 1_{n} \right) \\
& = \lim_{|w| \downarrow 0} \sum_{\ell = 1}^{k-1} \frac{1}{w} H_{\rho_{k}}^{(n)} \circ H_{\nu_{\ell}}^{(n)}\left(w1_{n} \right)
\end{align*}
where $[(F_{\mu_{k} }^{(n)})^{\circ \ell}]^{-1} = G_{\nu_{\ell}} $ is the Cauchy transform of a distribution $\nu_{\ell} \in \Sigma_{0}$ (this follows from  Theorem~\ref{nevanlinna}).
Moreover, we have that
$$\lim_{|w| \downarrow 0}  \frac{1}{w} H_{\nu_{\ell}}^{(n)}\left(w1_{n} \right) = 1_{n} $$
so that, passing to limits and utilizing the chain rule and the fact that  $H_{\nu_{\ell}}^{(n)}(0_{n}) = 0_{n}$ , we have that
 $$ \delta H_{\rho}^{(n)} (0_{n} ; 1_{n}) =  k \delta H_{\rho_{k}}^{(n)} (0_{n} ; 1_{n}) $$
Utilizing the main result in our appendix, Theorem~\ref{generalization}, we conclude that
\begin{equation}
\rho(1) = \mu(X^{2}) = k\mu_{k}(X^{2}) = k \rho_{k}(1).
\end{equation}
so that $\rho_{k}(1) = O(1/k)$.

Now, assume that $b \in M_{n}^{+,\epsilon}(\mathcal{B})$.  We claim that $\| b^{-1} \| \leq 1/\epsilon$.
Indeed, observe that, for $b = x + iy$ with $y > \epsilon 1_{n}$,
\begin{equation}\label{binverse}
b = \sqrt{y}(i + (\sqrt{y})^{-1} x (\sqrt{y})^{-1} ) \sqrt{y}
\end{equation}
(it follows easily from this equation that $b$ is invertible, but this is known).  Thus,
\begin{equation}\label{binverse2}
b^{-1} = (\sqrt{y})^{-1}(i + (\sqrt{y})^{-1} x (\sqrt{y})^{-1} )^{-1} (\sqrt{y})^{-1}.
\end{equation}
Now, utilizing  the spectral mapping theorem and the fact that the spectral radius agrees with the norm for normal operators, we have that  $\| (\sqrt{y})^{-1} \| \leq (\sqrt{\epsilon})^{-1}$.  Moreover, since $i + (\sqrt{y})^{-1} x (\sqrt{y})^{-1} $
is normal and has spectrum with imaginary part larger than $1$, we have that
$(i + (\sqrt{y})^{-1} x (\sqrt{y})^{-1})^{-1}$ is normal and, by the same spectral considerations, has norm bounded by $1$.  These observations, combined with \eqref{binverse} imply our claim.

Thus, for $b \in M_{n}^{+}(\mathcal{B})$, we have
\begin{align*}
\| F^{(n)}_{\mu_{k}}(b) - b \| & \leq \| \alpha_{k} \| + \| (\rho_{k}\otimes 1_{n})((b-X)^{-1} \| \\
& \leq \| \alpha \|/k + \| (b-X)^{-1} \| \|(\rho_{k}\otimes 1_{n}) (1_{n}) \| \\
& \leq  \frac{\| \alpha \|}{k} +  \frac{\|\rho_{k}(1) \|}{\epsilon} = \frac{\| \alpha \| + \rho(1) / \epsilon }{k}
\end{align*}
and the right hand side converges to zero uniformly over $M_{n}^{+,\epsilon}(\mathcal{B})$, independent of $n$.

Regarding the second part of our Proposition, we first observe that each of the moments  of $\mu_k$ converges to $0$.
  Indeed, utilizing the Schwarz inequality for $2$-positive maps as well as Proposition~\ref{exp}, we have that
\begin{align*}
 \| \mu_{k} (X b_{1} X b_{2}X \cdots b_{\ell}X) \|^{2}  & \leq \| \mu_{k}(X^{2}) \| \|  \mu_{k}(Xb_{\ell}^{\ast} X \cdots b_{2}^{\ast}X b_{1}^{\ast} b_{1} X b_{2} X\cdots  b_{\ell}X) \| \\
& \leq  \frac{\| \mu(X^{2} ) \| M^{2\ell} \|b_{1} \|^{2} \| b_{2}\|^{2} \cdots \| b_{\ell} \|^{2}}{k}
\end{align*}

Moreover, the tail of the series expansion of  $f^{(n)}(b,X) $  is bounded in norm independent of $n$ and $k$ .
 the individual entries all go to $0$ so the we conclude that  $f^{(n)}(b,X) \rightarrow 0 $ uniformly on $b \in B^{nc}_{r}(0)$ as $k\uparrow \infty$ so that we can immediately conclude that \eqref{nbhdinf2} holds.
This completes our proof.
\end{proof}

We next prove a differentiation result for vector valued functions.  We adapt a proof found in \cite{BP78} of a similar result for complex functions.

\begin{proposition}\label{differentiate0}
Let $\mathcal{A}$ and $\mathcal{B}$ denote unital Banach algebras.  Consider an open subset $\Omega \subset \mathcal{A}$.
Let $f_{t} : \Omega \mapsto \mathcal{B}$ for all $t \geq 0$ be a composition semigroup of analytic functions.
Assume that for every $b' \in \Omega$, there exists a $\delta > 0$ such that
\begin{enumerate}[(a)]
\item\label{Banach1}  $ \lim_{t \downarrow 0} f_{t}(b) - b \rightarrow 0$ uniformly over $b \in B_{\delta}(b')$
\item\label{Banach2}  For any $T > 0$, we have that $f_{t}(b) - b$ is uniformly bounded over $b \in B_{\delta}(b')$   and $t \in  [0,T]$.
\end{enumerate}
Then, there exists an analytic $\Phi : \Omega \mapsto \mathcal{B}$ such that
\begin{equation}\label{diff}
\frac{d f_{t}(b)}{dt} = - \Phi(f_{t}(b)).
\end{equation}
\end{proposition}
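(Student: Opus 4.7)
My plan is to adapt the Berkson--Porta argument of \cite{BP78} to this vector-valued setting, substituting Theorem \ref{Cauchy} and the Lipschitz estimate \eqref{lipschitz} for the scalar complex-analytic tools used there. Fix $b' \in \Omega$, pick $T > 0$ small enough (to be determined), and on a ball $B_\delta(b')$ where assumption (b) applies, define the auxiliary map
$$g(b) := \int_0^T \bigl(f_s(b) - b\bigr)\,ds,$$
which is analytic $\Omega \to \mathcal{B}$ by the usual interchange of analyticity and the bounded-interval integral. The semigroup identity $f_{s+h} = f_s \circ f_h$ together with the change of variables $u = s+h$ then yields the key functional equation
$$g(f_h(b)) - g(b) + T\bigl(f_h(b) - b\bigr) \;=\; \int_T^{T+h} f_u(b)\,du \;-\; \int_0^h f_u(b)\,du.$$
Dividing by $h$, the right-hand side tends to $f_T(b) - b$ as $h \downarrow 0$: the $\frac{1}{h}\int_0^h$ term goes to $b$ by assumption (a), while the $\frac{1}{h}\int_T^{T+h}$ term goes to $f_T(b)$ by continuity of $u \mapsto f_u(b)$ at $u = T$, which itself follows from $f_{T+\eta}(b) = f_T(f_\eta(b))$, assumption (a), and the Lipschitz continuity of $f_T$.

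The main obstacle is extracting the limit of $(f_h(b) - b)/h$ from this relation, which first demands a uniform-in-$h$ bound. By assumption (a) and the Cauchy estimate of Theorem \ref{Cauchy}, on a smaller subball the Lipschitz constant $L_g$ of $g$ is controlled by a quantity of order $T \cdot \sup_{s \in [0,T],\, b \in B_\delta(b')}\|f_s(b) - b\|$, which is $o(T)$ as $T \downarrow 0$. Choosing $T$ small enough that $L_g < T$ both makes $TI + Dg(b)$ invertible and, combined with $\|g(f_h(b)) - g(b)\| \leq L_g \|f_h(b)-b\|$, yields from the functional equation the a priori estimate
$$\bigl(T - L_g\bigr)\Bigl\|\, \tfrac{f_h(b) - b}{h} \,\Bigr\| \;\leq\; \Bigl\|\, \tfrac{1}{h}\!\int_T^{T+h}\! f_u(b)\,du \;-\; \tfrac{1}{h}\!\int_0^h\! f_u(b)\,du \,\Bigr\|,$$
whose right-hand side stays bounded as $h \downarrow 0$. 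With the difference quotient bounded, G\^{a}teaux differentiability of $g$ gives $[g(f_h(b)) - g(b)]/h = Dg(b)[(f_h(b)-b)/h] + o(1)$, and the functional equation passes to the limit as
$$\bigl(T I + Dg(b)\bigr)\,\Phi(b) \;=\; -\bigl(f_T(b) - b\bigr), \qquad \Phi(b) := -\lim_{h \downarrow 0} \frac{f_h(b) - b}{h}.$$
Hence $\Phi(b) = -(TI + Dg(b))^{-1}(f_T(b) - b)$ exists and is analytic in $b$ as a composition of analytic operations.

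Finally, the ODE \eqref{diff} at arbitrary $t \geq 0$ is immediate from the semigroup property: writing $f_{t+h}(b) = f_h(f_t(b))$ gives the right derivative
$$\tfrac{d^+}{dt}f_t(b) \;=\; \lim_{h \downarrow 0}\frac{f_h(f_t(b)) - f_t(b)}{h} \;=\; -\Phi(f_t(b)),$$
and since $t \mapsto \Phi(f_t(b))$ is continuous by continuity of the orbit and analyticity of $\Phi$, this right derivative automatically coincides with the two-sided derivative.
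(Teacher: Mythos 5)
Your proof is correct but takes a genuinely different route from the paper's. You adopt a Hille--Yosida--style ``resolvent'' argument: integrate the orbit to form $g(b)=\int_0^T (f_s(b)-b)\,ds$, derive the functional identity $g(f_h(b))-g(b)+T(f_h(b)-b)=\int_T^{T+h}f_u(b)\,du-\int_0^h f_u(b)\,du$, use the smallness of the Lipschitz constant of $g$ (for $T$ small, forced by hypothesis (a) together with the Cauchy/Lipschitz estimates) to get an a priori bound on the difference quotient, and then invert $TI+Dg(b)$ to produce the explicit formula $\Phi(b)=-(TI+Dg(b))^{-1}(f_T(b)-b)$. Analyticity of $\Phi$ then comes for free. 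The paper instead follows Berkson--Porta's dyadic doubling scheme: it first proves $\|f_{2t}(b)-2f_t(b)+b\|\le \tfrac{1}{10}\|f_t(b)-b\|$ via a path integral and the Cauchy estimate, bootstraps to $\|f_t(b)-b\|=O(t^{2/3})$, shows the dyadic limit $\lim_k 2^k(f_{2^{-k}}(b)-b)$ converges with an explicit geometric rate, and obtains analyticity of $\Phi$ via Dunford's theorem. Each approach has something to recommend it: yours gives a closed-form expression for the generator and makes the analyticity immediate, while the paper's produces quantitative uniform estimates -- in particular the inequality labelled \eqref{eq6}, $\|2^p(f_{1/2^p}(b)-b)+\Phi(b)\|\le MC(p)$ with $C(p)\to 0$ explicitly -- which the paper then \emph{reuses} in the proofs of Corollary~\ref{differentiate} and Theorem~\ref{generation} to establish uniform analyticity of $\Phi$ on noncommutative balls. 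If one were to substitute your proof wholesale, those downstream applications would need to extract comparable uniform estimates from your formula (which is doable, since $Dg$ and $f_T$ enjoy uniform bounds under the strengthened hypotheses, but is an extra step).

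Two small points to tighten in your write-up: (1) you should record that $s\mapsto f_s(b)$ is (right-)continuous and locally bounded, so that the Bochner integral defining $g$ is well posed and $g$ is analytic as an integral of analytic functions; this follows from the semigroup law, hypothesis (a), and the uniform Lipschitz bound on $\{f_s\}_{s\le T}$ coming from (b) and the Cauchy estimates. (2) The passage from a continuous right derivative to a two-sided derivative for $t\mapsto f_t(b)$ should be cited or spelled out (a continuous Banach-valued function whose right Dini derivative exists everywhere and is continuous is $C^1$); the paper sidesteps this by integrating the Riemann sums directly.
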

\begin{proof}

%First note that if $f_{t}(b) = b$ for arbitrarily small $t$ then assumption \eqref{Banach1} and the fact this is a composition semigroup implies that $f_{t}(b) \equiv b$ for all $t > 0$.  In what follows, we will differentiate $f_{t}$ with respect to the $t$ parameter.  Thus, in this special case, the derivative exists at $b$ and is equal to $0$ and we assume, henceforth, that $f_{t}(b) \neq b$ for $t$ small enough.

Fix $b' \in \Omega$.
We first claim that there exists an $\alpha> 0$ such that
\begin{equation}\label{eq2}
\| f_{2t}(b) - 2f_{t}(b) + b \| \leq \frac{1}{10}\| f_{t}(b) - b \|.
\end{equation}
for all $t \in [0,\alpha]$ and $b \in B_{\delta/2}(b')$ where the value of $\delta$ comes from the statement .
%(this is trivially true for those $b$ where $f_{t}(b) = b$ for $t$ arbitrarily small).

Indeed, fix  $b \in B_{\delta/2}(b')$. We first consider the simple case when there exists a sequence $t_n \downarrow 0$ such that $f_{t_n}(b) = b$. Since $\set{f_t}$ form a composition semigroup, this property then holds for a dense set of $t$'s, and by continuity assumption in part (a), for all $t > 0$. So \eqref{eq2} holds trivially.

Thus, suppose that $f_t(b) \neq b$ for $t \in [0, \alpha]$. Define a family of complex functions $g_{t}$ through the following equalities:
$$ h_{t} := \frac{f_{t}(b) - b}{\| f_{t}(b) - b\|} ; \ g_{t}(\zeta) := f_{t}(b + \zeta h_{t}) - b : B_{\delta/2}(0) \mapsto \mathcal{B}.$$
where $B_{\delta/2}(0)$ refers to the neighborhood of zero in the complex plane. Note that, since we are taking a ball of radius $\delta/2$, we may define $h_{t}$ for all such $b$ provided that our choice of $\alpha$ is small enough.

Consider the vector valued complex integral
\begin{equation}\label{integrand} \int_{0}^{\| f_{t}(b) - b \|} \frac{d}{d\zeta} [g_{t}(\zeta) - \zeta h_{t}] d\zeta .\end{equation}
By \eqref{Banach1} and the Cauchy estimates in Theorem~\ref{Cauchy}, the integrand can be made arbitrarily small for $t$ small.  By the fundamental theorem, this integral is equal to
$$ g_{t}(\| f_{t}(b) - b \|)  - g_{t}(0) - (f_t(b) - b) = f_{t}( b + (f_{t}(b) - b)) - b - 2 (f_{t}(b) - b) = f_{2t}(b) - 2f_{t}(b) + b.$$
Using our bound on the integrand, equation \eqref{eq2} follows immediately.

We now use \eqref{eq2} to prove that for $\alpha > 0$  there exists an $M > 0$ such that
\begin{equation}\label{eq3}
\| f_{t}(b) - b \| \leq Mt^{2/3}
\end{equation}
for all $t \in [0,\alpha] $ and  $b \in B_{\delta/2}(b')$.
Indeed, pick $t \in [0 , \alpha] $ and $m \in \mathbb{N}$ such that $2^{m}t \leq \alpha < 2^{m+1}t$.
Note that inequality \eqref{eq2} and the triangle inequality imply that
$$2\|f_{t}(b) - b \| - \| f_{2t}(b) - b \|  \leq \|f_{2t}(b) - 2f_{t}(b) + b \| \leq \frac{1}{10}\| f_{t}(b) - b \| $$
so that
\begin{equation}
\| f_{t}(b) - b \| \leq \frac{10}{19}  \|f_{2t}(b) - b \|   \leq 2^{-2/3} \| f_{2t}(b) - b \|
\end{equation}
Using this estimate inductively, we have
$$ \| f_{t}(b) - b \| \leq  2^{-2/3} \| f_{2t}(b) - b \|  \leq \cdots \leq 2^{-2m/3} \| f_{2^{m}t}(b) - b \|
= t^{2/3} \left( \frac{1}{2^{m}t}\right)^{2/3} M'$$
where $M'$ is a  bound on $\|f_{s}(b) - b \| $ for $s \leq 2$ which exists by \eqref{Banach2}.
Equation \eqref{eq3} follows with $M = 2^{2/3}M'/\alpha$.

Now, revisiting the argument for \eqref{eq2}, inequality \eqref{eq3} implies that the integrand in \eqref{integrand} has bound equal to  $$2M t^{2/3} $$
as a result of the Cauchy estimates.  Thus,  we have the following:
\begin{equation}\label{eq4}
\| f_{2t}(b) - 2f_{t}(b) + b \| \leq 2t^{2/3} \|f_{t}(b) - b\| \leq 2 M t^{4/3}.
\end{equation}
We may further conclude that
\begin{equation}\label{eq5}
\left\| \frac{f_{2t}(b) - b}{2t} - \frac{f_{t}(b) - b}{t}  \right\|  \leq Mt^{1/3}
\end{equation}
Thus, we have  that \begin{equation}\label{limit} \lim_{k \uparrow \infty} 2^{k}(f_{2^{-k}}(b) - b) \end{equation}
converges uniformly on $ B_{\delta/2}(b')$ and we refer to this limit as $-\Phi(b)$.

Using \eqref{eq5}, we note that $\Phi$ is locally bounded.  Indeed, we have that
\begin{align}\nonumber
\|  2^{p}(f_{1/2^{p}}(b) - b) + \Phi(b) \| \leq & \sum_{k=p}^{\infty} \| 2^{k}(f_{1/2^{k}}(b) - b) - 2^{k + 1}(f_{1/2^{k + 1}}(b) - b) \| \\ \label{eq6}
&  \leq  \frac{M}{2}\sum_{k=p}^{\infty} \left( \frac{1}{2^{1/3} } \right)^{k} =MC(p).
\end{align}
for all $b\in B_{\delta/2}(b')$.  Local boundedness of $\Phi$ follows since $(f_{1/2^{p}}(b) - b)$ is locally bounded.  Also note that $C(p) \rightarrow 0$ as $p \uparrow \infty$.

 Regarding analyticity of $\Phi$, consider a state $\varphi \in \mathcal{B}^{\ast}$ , $b \in B_{\delta/2}(b')$, and an element $h \in \mathcal{B}$ with $\| h \| \leq 1$.
We define  complex maps $$H_{m}(z): B_{\delta/2}(0) \subset \mathbb{C} \rightarrow \mathbb{C}$$
for $m \geq 0$  through the equalities:
$$ H_{0}(z) := \varphi \circ \Phi ( b + zh) ; \ \ H_{m}(z) := 2^{m} \varphi \circ (f_{ 2^{-m}}(b + zh) - (b + zh)). $$
By \eqref{limit}, $H_{m} \rightarrow H_{0}$  for $z \in B_{\delta/2}(0)$, and by \eqref{eq6}, the limit is bounded on this set.
Thus, $H_{0}$ is analytic in $z$.  By Dunford's theorem (\cite{Dun}), it follows that
$ \Phi(b + zh) $ is analytic in $z$ and, therefore, G\^{a}teaux differentiable.
As this function is locally bounded, it is analytic.

Regarding \eqref{diff},
observe that $\{ f_{t}(b) \}_{t \geq 0}$ is compact since it is the continuous image of $[0,t]$.
As \eqref{Banach1} and \eqref{Banach2} hold on neighborhoods of every point in this set, taking a finite cover, we have that \eqref{Banach1} and \eqref{Banach2} holds uniformly on a neighborhood of this set and, after a close look at the relevant constants, \eqref{eq6} is also maintained on this set.
Now, fix $t \geq 0$ and let $\ell_{p}/2^{p} \rightarrow t$ as $p \uparrow \infty$.
\begin{align*}
f_{t}(b) - b & = (f_{t}(b) - f_{t - \ell_{p}/2^{p}}(b)) + \sum_{j = 1}^{\ell_{p}} (f_{j/2^{p}}(b) - f_{(j-1)/2^{p}}(b)) \\
&= (f_{t}(b) - f_{t - \ell_{p}/2^{p}}(b)) + \sum_{j = 1}^{\ell_{p}} \frac{1}{2^{p}}(2^{p}[ f_{j/2^{p}}(b) - f_{(j-1)/2^{p}}(b) ] )
\end{align*}
As $p \uparrow \infty$, $$f_{t}(b) - f_{t - \ell_{p}/2^{p}}(b) = f_{\ell_{p}/2^{p}} \circ f_{t - \ell_{p}/2^{p}}(b)  -  f_{t - \ell_{p}/2^{p}}(b) \rightarrow 0$$ since \eqref{Banach1} holds on the entire path.  Moreover, the remaining summand is simply a Riemann sum approximation of a sequence of functions converging uniformly to $-\Phi \circ f_{s}(b)$ for $s \in [0,t]$.
The following equation follows immediately:
$$f_{t}(b) = b - \int_{0}^{t} \Phi \circ f_{s}(b) ds. $$
We conclude that \eqref{diff} holds, completing our proof.
\end{proof}

\begin{corollary}\label{differentiate}
Let $\mathcal{A}$ and $\mathcal{B}$ denote Banach algebras and $\Omega \subset \sqcup_{n=1}^{\infty} M_{n}(\mathcal{A})$
a non-commutative set.
Let $F_{t} : \Omega \mapsto \sqcup_{n=1}^{\infty} M_{n}(\mathcal{B})$  for all $t\geq 0$ and assume that they form a composition semigroup of analytic non-commutative functions.  Assume that, for each $n$, the composition semigroup of vector valued analytic functions $\{ F_{t}^{(n)} \}_{t \geq 0}$ satisfies the hypotheses of Proposition~\ref{differentiate0}.
Then there exists an analytic, noncommutative map $\Phi :\Omega \mapsto \sqcup_{n=1}^{\infty} M_{n}(\mathcal{B})$
such that
\begin{equation}\label{diff2}
\frac{d F^{(n)}_{t}(b)}{dt} = - \Phi^{(n)}(F^{(n)}_{t}(b))
\end{equation}
for all $n\in \mathbb{N}$, $b \in \Omega_{n}$.

Moreover, if we strengthen these assumptions so that, for any $n$ and  $b \in M_{n}(\mathcal{B})$, there exists a $\delta > 0$ with
\begin{enumerate}
\item\label{Banach21}  $ \lim_{t \downarrow 0} F_{t} - Id \rightarrow 0$ uniformly over $ B_{\delta}^{nc}(b)$.
\item\label{Banach22}  For any $T > 0$, we have that $f_{t}(b) - b$ is uniformly bounded on $ B_{\delta}^{nc}(b)$  and $t \in  [0,T]$.
\end{enumerate}
then $\Phi$ is uniformly analytic.
\end{corollary}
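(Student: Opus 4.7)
The plan is to apply Proposition~\ref{differentiate0} levelwise in $n$ to produce the components $\Phi^{(n)}$, then assemble these into a non-commutative function by checking compatibility with direct sums and similarities, and finally upgrade analyticity to uniform analyticity under the strengthened hypotheses by tracking the estimates in the proof of Proposition~\ref{differentiate0} uniformly over non-commutative balls.

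First, for each fixed $n \in \mathbb{N}$, I would apply Proposition~\ref{differentiate0} to the composition semigroup $\{F_t^{(n)}\}_{t \geq 0}$ on the open subset $\Omega_n \subset M_n(\mathcal{A})$ with target Banach algebra $M_n(\mathcal{B})$. Hypotheses (a) and (b) of Proposition~\ref{differentiate0} hold by assumption, yielding an analytic map $\Phi^{(n)}: \Omega_n \to M_n(\mathcal{B})$ satisfying \eqref{diff}. The key point is that the proof of Proposition~\ref{differentiate0} produces $\Phi^{(n)}$ via the explicit pointwise limit
\[
-\Phi^{(n)}(b) = \lim_{k \to \infty} 2^k\bigl(F^{(n)}_{1/2^k}(b) - b\bigr),
\]
and I would exploit this formula to verify the non-commutative function axioms. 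Since each $F_t$ respects direct sums, for $X \in \Omega_n$ and $Y \in \Omega_p$ we have $2^k(F_{1/2^k}(X \oplus Y) - (X \oplus Y)) = 2^k(F^{(n)}_{1/2^k}(X) - X) \oplus 2^k(F^{(p)}_{1/2^k}(Y) - Y)$, and the direct sum being continuous allows passage to the limit, giving $\Phi(X \oplus Y) = \Phi(X) \oplus \Phi(Y)$. The similarity axiom follows identically from $F_t(SXS^{-1}) = S F_t(X) S^{-1}$. This establishes the first assertion.

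For the uniform analyticity part, I would revisit the proof of Proposition~\ref{differentiate0} and observe that the estimates \eqref{eq2}--\eqref{eq6} depend only on (i) uniform smallness of $f_t - \mathrm{id}$ as $t \downarrow 0$ on a ball around the reference point and (ii) uniform boundedness of the family on such a ball. Under the strengthened hypotheses \eqref{Banach21} and \eqref{Banach22}, both properties hold uniformly over the full non-commutative ball $B_\delta^{nc}(b)$. Consequently, the Cauchy estimate argument yielding \eqref{eq2} and the bootstrap yielding \eqref{eq3}--\eqref{eq5} go through with constants depending only on the uniform bounds, and the telescoping inequality \eqref{eq6} then shows that $2^p(F_{1/2^p} - \mathrm{Id}) \to -\Phi$ uniformly on $B_\delta^{nc}(b)$. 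Uniform boundedness of $\Phi$ on $B_\delta^{nc}(b)$ follows immediately, and since each approximant is non-commutative analytic while the convergence is uniform, the limit is G\^{a}teaux differentiable and locally bounded, hence uniformly analytic at $b$.

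The main technical obstacle will be bookkeeping in the last step: one must confirm that the constants $M, M', C(p)$ appearing in \eqref{eq3}--\eqref{eq6} can be chosen uniformly over the non-commutative ball, i.e., across all matrix amplifications $\oplus^k b$ simultaneously. This is precisely the content of hypotheses \eqref{Banach21} and \eqref{Banach22}: the bounds are imposed on the full non-commutative ball $B_\delta^{nc}(b) = \sqcup_{k=1}^\infty B_\delta(\oplus^k b)$, so the constants extracted from these bounds are genuinely independent of the amplification level and the argument transfers cleanly.
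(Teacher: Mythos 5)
Your proposal is correct and follows essentially the same route as the paper: both apply Proposition~\ref{differentiate0} levelwise, verify the non-commutative function axioms via the explicit limit formula $-\Phi(b) = \lim_{k} 2^{k}(F_{2^{-k}}(b)-b)$ together with continuity of direct sums and similarities, and obtain uniform analyticity by tracking the constants $\alpha, M, M'$ in the proof of Proposition~\ref{differentiate0} and observing that the strengthened hypotheses make \eqref{eq6} hold uniformly over $B_{\delta}^{nc}(b)$. No gaps; your closing bookkeeping remark about constants being uniform over all amplifications $\oplus^k b$ is exactly the point the paper's proof emphasizes.
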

\begin{proof}
We showed in Proposition~\ref{differentiate0} this map $\Phi$ exists.  We must show that it is a non-commutative function.
However, this is immediate since, for $b_{1} \in M_{n}(\mathcal{B})$ and $b_{2} \in M_{p}(\mathcal{B})$, we have
\begin{align*}
\Phi^{(n+p)}(b_{1} \oplus b_{2}) & = \lim_{k \uparrow \infty} 2^{k}(F^{(n+p)}_{2^{-k}}(b_{1} \oplus b_{2}) - b_{1} \oplus b_{2} )\\ & = \lim_{k \uparrow \infty} 2^{k}( [F^{(n)}_{2^{-k}}(b_{1} ) - b_{1}] \oplus [ F^{(n)}_{2^{-k}}(b_{2}) -  b_{2}]) \\
&= \Phi^{(n)}(b_{1}) \oplus  \Phi^{(p)}(b_{2}) .
\end{align*}
A similar proof shows that it also satisfies the defining invariance property so that our first claim holds.

With respect to the uniform analyticity, we refer to the proof of Proposition~\ref{differentiate0}.  Observe that inequality \eqref{eq2} holds for $\alpha$ small enough.  This $\alpha$ is only
dependent on the convergence of the integrand in \eqref{integrand}.  This converges to $0$ uniformly on $B_{\delta}^{nc}(b)$ by assumption \eqref{Banach21} and the same Cauchy estimate
so that the choice of $\alpha$ is also uniform on this set.  Moreover, the constant $M$ in \eqref{eq3} is equal to $2^{2/3}M'/\alpha$
where $M'$ is the upper bound on $F_{s} - Id$ for $s \leq \alpha$.  Assumption \eqref{Banach22} implies that this bound is uniform on
$B_{\delta}^{nc}(b)$.  Thus, inequality \eqref{eq6} holds on all of this set, implying uniform analyticity.
%Indeed, consider the $\mathcal{B}$-valued function
%$$f_{t}(\zeta) : B_{\delta}(0) \subset \mathbb{C} \rightarrow  M_{n}(\mathcal{B})$$
%defined through the following equalities:
%$$ h_{t} = \frac{F^{(n)}_{t}(b) - b}{\| F^{(n)}_{t}(b) - b \|} ; \ f_{t}(\zeta) := F^{(n)}_{t}( b + \zeta h_{t}) - \zeta h_{t}. $$
%Define $C$ so that $F_{t}^{(n)}$ is defined on $ \{ b \in H^{+}(\mathcal{B}) : \| b^{-1} \| < 1/C \}$ for all $t \in [0,1]$ and $n \in \mathbb{N}$ and has image in $\{ b \in H^{+}(\mathcal{B}) : \| b^{-1} \| < 1/R \}$
%Consider the vector valued complex integral
%\begin{equation}\label{integrand} \int_{0}^{\| F^{(n)}_{t}(b) - b \|} \frac{d}{d\zeta} [f_{t}(\zeta) - \zeta h_{t}] d\zeta .%\end{equation}
%Using \ref{D2}, $F^{(n)}_{t}( b + \zeta h_{t})$ lies in $M_{n}^{+,\epsilon}(\mathcal{B})$ for all $b \in B_{\gamma}(b')$ when %$\gamma , \delta$ are small and $t\in [0,\alpha]$.
%By the Cauchy estimates in \cite{Hille-Phillips} and the uniform convergence in \ref{D3}, we have that the integrand can be made %arbitrarily small for $t$ small enough.
%Moreover, the fundamental theorem implies that the integral is equal to
%$$ f_{t}( \| F_{t}(b) - b \|) - f_{t}(0) = F_{2t}(b) - (F_{t}(b) - b) -  F_{t}(b) = F_{2t}(b) -  2F_{t}(b) + b. $$
\end{proof}

\begin{theorem}\label{generation}
Let $\{ F_{t} \}_{t \in \mathbb{Q}^+}$ denote a composition semigroup of non-commutative functions $F_{t}: H^{+}(\mathcal{B}) \mapsto  H^{+}(\mathcal{B})$ such that
\begin{enumerate}[(i)]
\item\label{Ftran1} $\|F_{t}^{(n)}(b) - b \| \rightarrow 0$ uniformly on $M_{n}^{+,\epsilon}(\mathcal{B})$ for all $\epsilon > 0$, independent of $n$ as $t \downarrow 0$.
\item\label{Ftran2}  For any $\alpha , \epsilon > 0$ and sequence $b_{k} \in \Gamma^{(n)}_{\alpha , \epsilon} $ with $\| b_{k} ^{-1}\| \downarrow 0$, we have that $b_{k}^{-1} F_{t}^{(n)}(b_{k}) \rightarrow 1_{n}$ as $k \uparrow \infty$
\item\label{Ftran3} $\Im{F^{(n)}_{t}(b)} \geq \Im{b}$ for all $b \in M_{n}^{+}(\mathcal{B})$ and $t \geq 0$.
\end{enumerate}
Then $\{ F_{t} \}_{t \in \mathbb{Q}^+}$ extends to a semigroup $\{ F_{t} \}_{t \geq 0}$ and the map $\Phi$ from Proposition~\ref{differentiate} is an element of $\tilde{\Lambda}$.

Since, by Proposition~\ref{convergence}, the conditions above are satisfied by $F$-transforms, this implies that a $\rhd$-infinitely divisible distribution $\mu$ as in Definition~\ref{Defn:infinitely-divisible} can be realized as $\mu = \mu_1$ for a monotone convolution semigroup $\set{\mu_t}_{t \geq 0}$. For such a semigroup, $\Phi \in \Lambda$.

Conversely, given a map $\Phi \in \tilde{\Lambda}$ we may construct a semigroup of non-commutative functions satisfying the hypotheses above as well as the differential equation
\begin{equation}\label{differentiate2}
\frac{d F_{t}(b)}{dt} = -\Phi(F_{t}(b))
\end{equation}
  If $\Phi \in \Lambda$ then the semigroup arises from a $\rhd$-infinitely divisible distribution.
\end{theorem}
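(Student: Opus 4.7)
The proof splits into three parts. For the forward direction, I would first extend $\{F_t\}_{t \in \mathbb{Q}^+}$ to $\{F_t\}_{t \geq 0}$ as follows: hypothesis (iii) guarantees that $F_t$ preserves each slice $M_n^{+,\epsilon}(\mathcal{B})$, so iterating the short-time uniform convergence from (i) gives a uniform bound on $\|F_t^{(n)}(b) - b\|$ for $t$ in any compact interval and $b$ in any such slice. Combined with the Lipschitz estimate \eqref{lipschitz}, this shows that $t \mapsto F_t(b)$ is uniformly Cauchy on rationals and extends by density to all $t \geq 0$, preserving (i)--(iii) and the semigroup law. Applying Corollary \ref{differentiate} with $\Omega = H^+(\mathcal{B})$---whose hypotheses follow from (i) and the iteration bound---produces the analytic non-commutative generator $\Phi(b) = -\lim_k 2^k(F_{2^{-k}}(b) - b)$; since (iii) forces $\Im[2^k(F_{2^{-k}}(b) - b)] \geq 0$, $\Phi$ is valued in $\overline{H^-(\mathcal{B})}$. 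To see $\Phi \in \tilde{\Lambda}$, condition (I) follows from the telescoping estimate \eqref{eq6} in the proof of Proposition \ref{differentiate0} together with the uniformity over $n$ from (i); condition (II) follows from hypothesis (ii) by writing $b_k^{-1}\Phi(b_k) = -\lim_m 2^m b_k^{-1}(F_{2^{-m}}(b_k) - b_k)$ and exchanging limits, justified by uniform control on Stolz angles.

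For the monotone infinitely-divisible specialization, Proposition \ref{convergence} verifies hypotheses (i)--(iii) for the rational semigroup $F_{p/q} = F_{\mu_q}^{\circ p}$ with $\mu = \mu_q^{\rhd q}$, so the first part yields a monotone convolution semigroup extending this rational family with generator $\Phi$. The upgrade $\Phi \in \Lambda$ (not merely $\tilde{\Lambda}$) comes from \eqref{nbhdinf2}: the functions $H^{(n)}_{\mu_k}(b) - b$ admit a uniform analytic extension to a fixed non-commutative ball $B^{nc}_r(0)$ with $r$ independent of $k$ and $n$, and the scaled limit $k(H^{(n)}_{\mu_k}(b) - b)$ therefore extends uniformly analytically as well. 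Proposition \ref{differentiate0} identifies this limit with $-\Phi(b^{-1})$, and self-adjointness of $\mathcal{R}(b) = \Phi(b^{-1})$ on self-adjoint $b$ is inherited from that of each $H^{(n)}_{\mu_k}$.

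For the converse direction, given $\Phi \in \tilde{\Lambda}$, a Picard--Lindel\"of argument in each slice $M_n(\mathcal{B})$ produces a local solution to \eqref{differentiate2} with $F_0 = \mathrm{Id}$, using that $\Phi$ is locally Lipschitz from its analyticity and local boundedness. Because $\Im \Phi \leq 0$, solutions cannot escape $M_n^{+}(\mathcal{B})$, and in fact $\Im F_t^{(n)}(b) \geq \Im b$; together with the global boundedness of $\Phi$ on each slice from (I), this gives global existence. Uniqueness enforces both the semigroup law and the noncommutative function structure of each $F_t$, and hypotheses (i)--(iii) follow. When $\Phi \in \Lambda$, each $F_t$ must further be shown to be an $F$-transform: using Theorem \ref{nevanlinna} together with the explicit L\'{e}vy--Hin\u{c}in form $\Phi(b) = -\gamma + (\sigma \otimes 1_n)(b(1-Xb)^{-1})$ supplied by $\Phi \in \Lambda$, I would substitute a formal series ansatz for $F_t(b^{-1})^{-1}$ into the ODE, derive coefficient-wise ODEs for the resulting moments, and verify that the uniform analytic extension of $\Phi(b^{-1})$ at $0$ gives uniform bounds on these coefficients on a fixed non-commutative ball. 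The main obstacle will be this last step: propagating uniform analyticity at $0$ through the nonlinear composition structure of the ODE---equivalently, showing that the Cauchy-type expansion of $F_t(b^{-1})^{-1}$ converges uniformly in $t$, $n$, and on a fixed ball---is the heart of why monotone infinite divisibility connects to L\'{e}vy--Hin\u{c}in composition semigroups, and requires careful interplay of the exponential bounds of Proposition \ref{exp} with the ODE.
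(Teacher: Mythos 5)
Your proposal tracks the paper's argument closely in both structure and substance: the extension from rational to real time via (i) and (iii), the invocation of Corollary~\ref{differentiate} to extract $\Phi$, the telescoping estimate \eqref{eq6} to get membership in $\tilde{\Lambda}$, the upgrade to $\Lambda$ via the uniform extension on $B^{nc}_r(0)$ from Proposition~\ref{convergence}, and the Picard-type iteration $f_{k+1}(t,b) = b - \int_0^t \Phi(f_k(s,b))\,ds$ for the converse. The minor wrinkle that you work with $k\bigl(H^{(n)}_{\mu_k}(b) - b\bigr)$ rather than $k\bigl(F^{(n)}_{\mu_k}(b^{-1}) - b^{-1}\bigr)$ is harmless; both quantities extend to the same ball by \eqref{nbhdinf1}--\eqref{nbhdinf2} and give equivalent information. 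Your observation that the noncommutative function property of $F_t$ is inherited (by uniqueness, or simply because each iterate is a noncommutative function when $\Phi$ is) is also consistent with the paper.

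The one place where you genuinely diverge from the paper, and where you correctly flag a gap, is the final step: showing that when $\Phi \in \Lambda$ the constructed $F_t$ are $F$-transforms. You propose a formal series ansatz for $F_t(b^{-1})^{-1}$ and coefficient-wise ODEs, which is essentially the combinatorial route taken for Proposition~\ref{Prop:Evolution} in Section~\ref{combinatorics}, but you do not complete the uniform-convergence argument. The paper avoids this entirely by staying inside the analytic iteration: it passes to the inverted picture $f_k^{(n)}(t,b^{-1})^{-1}$ directly, uses the fact that $\Phi \in \Lambda$ supplies $\mathcal{R}(b) = \Phi(b^{-1})$ with a fixed bound $M$ and Lipschitz constant $L$ on $B^{nc}_\delta(0)$, and then proves by induction the estimate \eqref{cauchydiff}
\[
\bigl\|f^{(n)}_{k+1}(t,b^{-1})^{-1} - f^{(n)}_{k}(t,b^{-1})^{-1}\bigr\|
\leq \Bigl(\tfrac{1}{1-\gamma M \alpha}\Bigr)^{2}(\gamma^2 L \alpha)
\bigl\|f^{(n)}_{k}(t,b^{-1})^{-1} - f^{(n)}_{k-1}(t,b^{-1})^{-1}\bigr\|,
\]
which shows the inverted iterates stay in $B_\delta(0_n)$ and are Cauchy on a fixed ball $B_\gamma(0_n)$, uniformly over $n$, $k$, and $t \in [0,\alpha]$. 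The limit then agrees with $f(t,b^{-1})^{-1}$ by analytic continuation, and Theorem~\ref{nevanlinna}\eqref{nev2} certifies $f(t,\cdot)$ as an $F$-transform. The advantage of this route over yours is that it never has to reconcile a formal power series with an actual analytic function and requires no separate uniformity-in-$n$ argument for the coefficients; the exponential bound of Proposition~\ref{exp} is replaced by the single uniform-analyticity datum that defines $\Lambda$. If you pursue your combinatorial route instead, you would in effect be re-deriving Proposition~\ref{Prop:Evolution} and then still need the analytic ingredient (Theorem~2.10 of \cite{Belinschi-Popa-Vinnikov-ID}, or the argument above) to pass from formal to actual convergence. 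You should also say a little more to justify that hypothesis (ii) holds for the constructed semigroup, which the paper does by the inductive estimate on $b_\ell^{-1} f^{(n)}_{k}(t,b_\ell)$ in \eqref{convergeto1}; it does not simply "follow."
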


We shall refer to this element $\Phi$ as the \textit{generator} or the semigroup $\{ F_{t} \}_{t \geq 0}$.
\begin{proof}

 First, let $\Phi \in \tilde{\Lambda}$.
We will produce the semigroup it generates by the method of successive approximations.

Consider a sequence of non-commutative functions $\{ f_{k}(t,\cdot) \}_{t \geq 0 , \ k \in \mathbb{N}}$ defined as follows:
\begin{equation}\label{successive}
 f^{(n)}_{1}(t,b) = b ; \ \ f^{(n)}_{k+1}(t,b) = b - \int_{0}^{t} \Phi(  f^{(n)}_{k}(s,b)) ds.
\end{equation}
We claim that
$f_{k}(t,\cdot)$ is convergent and satisfies the semigroup property with generator $\Phi$.

Observe that since $\Phi$ is uniformly bounded by a constant $M$ on set $M_{n}^{+,\epsilon/2}(\mathcal{B})$
and $f_{k}(t,\cdot)$ maps the set $M_{n}^{+,\epsilon}(\mathcal{B})$ to itself since $$\Phi : H^{+}(\mathcal{B}) \mapsto  H^{-}(\mathcal{B})$$ we have that \begin{equation}\label{imaginary}\Im{f^{(n)}_{k}(t,b)}\geq \Im{(b)}.\end{equation}
By \eqref{lipschitz}, this implies that  $f^{(n)}_{k}(t,\cdot)$ is Lipschitz on the set $B_{\epsilon/2}(b) \subset M_{n}^{+,\epsilon/2}(\mathcal{B})$ for all $b \in  M_{n}^{+,\epsilon}(\mathcal{B})$, and the Lipschitz constant $L$ is uniform over both $k$, $b$ and bounded $t$.  Moreover, we may extend the Lipschitz inequality
$$ \| f_{k}(t,b) - f_{k}(t,b') \| \leq L \| b - b' \| $$
to all $b , b' \in M_{n}^{+,\epsilon}(\mathcal{B})$ by taking a path $b + s(b'-b)$ for $s \in [0,1]$ and using the Lipschitz estimate on intervals of distance $\epsilon/2$ since the distances are additive on this path.
Using this Lipschitz estimate in the integrand of \eqref{successive},
we conclude that
\begin{equation}
\|  f^{(n)}_{2}(t,b) -  f^{(n)}_{1}(t,b) \| = t \| \Phi(b) \| \leq tML
\end{equation}
and we may conclude that
\begin{align*}
\|  f^{(n)}_{3}(t,b) -  f^{(n)}_{2}(t,b) \| & = \left\| \int_{0}^{t}  [\Phi( f^{(n)}_{2}(s,b)) - \Phi( f^{(n)}_{1}(s,b))]ds \right\| \\
& \leq L \left\| \int_{0}^{t} [ f^{(n)}_{2}(s,b) -  f^{(n)}_{1}(s,b) ] ds \right\|  \\
& \leq L \int_{0}^{t} [ LMs ] ds  \leq \frac{t^{2}L^{2}M}{2}
\end{align*}
Continuing inductively, we have that
\begin{equation}\label{boundsum}
\|  f^{(n)}_{k+1}(t,b) -  f^{( n)}_{k}(t,b) \| \leq \frac{M(Lt)^{k+1}}{L(k+1)!}.
\end{equation}
For any choice of $t \in [0 , \alpha]$, we have that
\begin{equation}\label{boundsum2}
f^{(n)}_{N+1}(t, b) - b =  \sum_{k=0}^{N} \left( f^{(n)}_{k+1}(t,b) -  f^{( n)}_{k}(t,b) \right)
\end{equation}
is a convergent series as $N \uparrow \infty$ and we may conclude that  $f_{N}(t, \cdot ) $ converges to a function $  f (t,\cdot )$ uniformly on $M_{n}^{+,\epsilon}(\mathcal{B})$, independent of $n$.

It is clear that $f(t,\cdot)$ satisfies \eqref{differentiate2}.
Regarding the asymptotics, let $\alpha , \epsilon > 0$ and fix a sequence $b_{\ell} \in \Gamma^{(n)}_{\alpha , \epsilon} $ with $\| b_{\ell} ^{-1}\| \downarrow 0$.
Note that $b^{-1}_{\ell}f_{1}^{(n)}(t,b_{\ell}) \equiv 1_{n}$ and satisfies $\|f_{1}^{(n)}(t,b_{\ell})  \|^{-1} \downarrow 0$ as $\| b^{-1}_{\ell} \| \downarrow 0$.
We claim $b^{-1}_{\ell}f_{k}^{(n)}(t,b_{\ell}) \rightarrow 1_{n}$ and satisfies $\|f_{k}^{(n)}(t,b_{\ell})  \|^{-1} \downarrow 0$  as $\| b^{-1}_{\ell} \| \downarrow 0$ for all $k$,  uniformly over $t \in [0,\alpha]$.

Proceeding by induction, we have that for fixed $k$
\begin{equation}\label{convergeto1} b_{\ell}^{-1} f^{(n)}_{k+1}(t,b_{\ell}) = 1_{n} -   \int_{0}^{t} [ b_{\ell}^{-1}f^{(n)}_{k}(s,b_{\ell})] (f^{(n)}_{k}(s,b_{\ell}))^{-1}\Phi(  f^{(n)}_{k}(s,b_{\ell})) ds.\end{equation}
We bound the integrand by
$$ \| [ b_{\ell}^{-1}f^{(n)}_{k}(s,b_{\ell})]\| \| (f^{(n)}_{k}(s,b_{\ell}))^{-1}\Phi(  f^{(n)}_{k}(s,b_{\ell}))\|$$
which converges to $0$  uniformly over $s \in [0,\alpha]$ by induction, so that \eqref{convergeto1} converges to $1_{n}$.
Moreover,
$$ \| [f^{(n)}_{k+1}(t,b_{\ell})]^{-1} \| \leq \| b_{\ell}^{-1} \| \| b_{\ell} [f^{(n)}_{k+1}(t,b_{\ell})]^{-1} \| \rightarrow 0.$$
Thus, each $f_{k}(t,\cdot)$ has the appropriate asymptotics and, since $f(t,\cdot)$ is a uniform limit of these functions on $M_{n}^{+,\epsilon}$, our claim holds  Condition (iii) follows from \eqref{imaginary}.

In order to complete our proof, we further assume that $\Phi \in \Lambda$ and prove that the functions $f(t, \cdot)$ are in fact the
$F$-transforms of noncommutative distributions $\mu_{t} \in \Sigma_{0}$.  To do so we must show that
the function $f(t,b^{-1})^{-1}$ has a uniformly analytic extension to a neighborhood of $0$ for all $t \geq 0$.
Note that, since $\Phi \in \Lambda$, there exists a $\delta > 0$ and constants $M , L > 0$ such that $\Phi^{(n)}(b^{-1})$
extends to $B^{nc}_{\delta}(0) $ with upper bound $M$ and Lipschitz constant $L$.

Now fix $\alpha >0$. We claim that, for $\gamma > 0 $ small enough we have that $f_{k}^{(n)}(t,b^{-1})^{-1}$ extends to $B_{\gamma}(0_{n}) \subset M_{n}(\mathcal{B})$ for all $n$ and satisfies $f_{k}^{(n)}(t,b^{-1})^{-1} \in B_{\delta}(0_{n})$ for all $b \in B_{\gamma}(0_{n}) $.
Choose any $t \in [0,\alpha]$ and $b \in B_{\gamma}(0_{n}) $ where $\gamma < \delta$ is yet unspecified.  We have
\begin{align*}
\| f^{(n)}_{2}(t , b^{-1})^{-1} -  f^{(n)}_{1}(t , b^{-1})^{-1} \| & = \left\|  \left[ \left(1_{n} - \int^{t}_{0} b\Phi(b^{-1}) ds \right)^{-1} - 1_{n} \right]b  \right\|  \\
& \leq \sum_{n=1}^{\infty}  \left\| \int^{t}_{0} b\Phi(b^{-1}) ds \right\|^{n} \|b \| \\
& \leq \gamma \sum_{n=1}^{\infty} (\gamma M\alpha)^{n}\\
& = \frac{\gamma^{2}M \alpha }{1 - \gamma M \alpha}
\end{align*}
Deriving a similar inequality for general $k$, we have that
\begin{equation}
\begin{split}
\|& f^{(n)}_{k+1}(t , b^{-1})^{-1} -  f^{(n)}_{k}(t , b^{-1})^{-1} \| \label{cauchydiff} \\
&= \left\| \left( b^{-1} - \int_{0}^{t} \Phi \circ f^{(n)}_{k}(s , b^{-1})ds  \right)^{-1} - \left(  b^{-1} - \int_{0}^{t} \Phi \circ f^{(n)}_{k-1}(s , b^{-1})ds  \right)^{-1}  \right\|  \\
&= \left\| \left( 1_{n} -\int^{t}_{0} b\Phi(f^{(n)}_{k}(t , b^{-1}) ) \right)^{-1}  \left(  b\int^{t}_{0} \Phi(f^{(n)}_{k-1}(t , b^{-1}) ) - \Phi(f^{(n)}_{k}(t , b^{-1}) ) \right) \right. \\
&\qquad \left. \left( 1_{n} -\int^{t}_{0} b\Phi(f^{(n)}_{k-1}(t , b^{-1}) ) \right)^{-1}b \right\|  \\
& \leq \left( \frac{1}{1 - \gamma M \alpha}\right)^{2}  (\gamma^{2} L \alpha) \|  f^{(n)}_{k}(t , b^{-1})^{-1} -  f^{(n)}_{k-1}(t , b^{-1})^{-1}\| \\
\end{split}
\end{equation}
By induction, we have that
$$ \| f^{(n)}_{k+1}(t , b^{-1})^{-1}  - b \| =  \sum_{\ell =1}^{k} \frac{M \gamma^{2\ell}L^{\ell-1} \alpha^{\ell} }{(1 - \gamma M \alpha)^{2\ell - 1} }$$
This is convergent as $k \uparrow \infty$ for $\gamma$ small and converges to $0$ as $\gamma \downarrow 0$.  Thus, for $\gamma$ small enough,
we have that  $f^{(n)}_{k+1}(t , b^{-1}) \in B_{\delta}(0_{n})$ for all $k$ and $n$  and, therefore,  converges to a limit function on $B_{\gamma}(0_{n})$  (since the differences in \eqref{cauchydiff} are Cauchy).
This limit function must agree with $f(t,\cdot)$ by analytic continuation.   This completes our proof that  $f(t,\cdot)$ is an $F$-transform for all $t$.

%If we additionally assume that $\Phi \in \Lambda$, the same proof, coupled with the fact that
%$\Phi$ extends to $\mathcal{O}_{C,\epsilon}$, allows us to show that $f(t,\cdot)$ extends to such a set for bounded $t$.  The only caveat to this claim is %that we must show that there exists a fixed $C' > 0$
%such that \begin{equation}\label{preservation}
%f^{(n)}_{k+1}(t,\cdot) :  \mathcal{O}_{C',\epsilon,n} \mapsto \mathcal{O}_{C,\epsilon,n}
%\end{equation}
%so that we may utilized a fixed Lipschitz constant $L$.
%This is easily done by letting $L$ denote the Lipschitz constant for $\Phi$ on $\mathcal{O}_{C,\epsilon,n}$ and letting $$ C' = C + \sum_{k=0}^{\infty} %\frac{M(Lt)^{k+1}}{L(k+1)!}$$
%so that \ref{boundsum} and \ref{boundsum2} implies that \ref{preservation} holds.

%It remains to show that

To address the converse, consider a semigroup $\{F_{t} \}_{t \in \mathbb{Q}^{+}}$ satisfying the \eqref{Ftran1} and \eqref{Ftran2} in the statement of the theorem. First note that this easily extends to an $\mathbb{R}^{+}$ composition semigroup.
Indeed, define $F_{t}(b) = \lim_{p/q \rightarrow t} F_{p/q}(b)$.
To see that this is well defined, note that, as $p/q , p'/q' \rightarrow t$, we have $$\|F^{(n)}_{p/q}(b) - F^{(n)}_{p'/q'}(b) \|
= \|F^{(n)}_{p/q - p'/q'} \circ  F^{(n)}_{p'/q'}(b) - F^{(n)}_{p'/q'}(b) \| \rightarrow 0$$
uniformly on $M_{n}^{+,\epsilon}(\mathcal{B})$
by property \eqref{Ftran1} and \eqref{Ftran3} . It is immediate that this is a composition semigroup over $\mathbb{R}^{+}$ satisfying \eqref{Ftran1}, \eqref{Ftran2} and \eqref{Ftran3}.

By  Corollary \ref{differentiate}, this semigroup may be differentiated to produce a non-commutative function $\Phi$ .
Regarding the asymptotics of $\Phi$, consider the inequality
\begin{equation}\label{asymptote}
 \|b^{-1} \Phi^{(n)}(b) \| \leq \left\| \frac{b^{-1}(F^{(n)}_{t}(b) - b)}{t}  \right\| + \|b^{-1}\| \left\|\frac{ (F^{(n)}_{t}(b) - b)}{t} - \Phi^{(n)}(b) \right\| .
\end{equation}
Utilizing inequality \eqref{eq6} in the proof of Proposition \ref{differentiate0} produces
\begin{equation}\label{uniformity}
 \left\|\frac{ (F^{(n)}_{2^{N}}(b) - b)}{2^{N}} - \Phi^{(n)}(b) \right\| \leq M\sum_{k=N +1}^{\infty}\left( \frac{1}{2^{1/3} } \right)^{k}
\end{equation}
where this $M = 2M'/\alpha$ .  As was noted in the proof of Corollary \ref{differentiate}, uniform convergence in the sense of \eqref{Ftran1} and \eqref{Ftran2} implies a uniform bound on $M$.  Thus, \eqref{uniformity} converges to $0$ uniformly on
$M_{n}^{+,\epsilon}(\mathcal{B})$ so that, for fixed $t$ small enough, second term on the right hand side of \eqref{asymptote} is smaller than any $\delta > 0$ for $b \in M_{n}^{+,\epsilon}(\mathcal{B})$.  Letting $b_{k} \in \Gamma^{(n)}_{\alpha , \epsilon} $ satisfy
$\| b_{k} ^{-1}\| \downarrow 0$, the first term on the right hand side of \eqref{asymptote} converges to $0$ by assumption \eqref{Ftran2}, and it follows that $\Phi \in \tilde{\Lambda}$.

If $\{F_{t} \}_{t \geq 0}$ arises from a $\rhd$-infinitely divisible measure, then it follows from Proposition~\ref{exp} and Theorem~\ref{nevanlinna} that $b_{k}^{-1} F_{\mu_{t}}^{(n)}(b_{k}) \rightarrow 1_{n}$ for any sequence $b_{k} \in M_{n}(\mathcal{B})$ with $\| b_{k} ^{-1}\| \downarrow 0$ and a similar proof allows one to conclude that $\Phi$ satisfies condition \eqref{omega2} in the definition of $\Lambda$.

It remains to show that $\Phi$ satisfies \eqref{omega1}.
However, Proposition \ref{convergence} implies that
there exists a fixed $r > 0$ such that each function $F_{\mu_{t}}^{(n)}(b^{-1}) - b^{-1}$ extends to $B_{r}(\{ 0\})$ and converges to $0$ uniformly on this set.
Thus, the strengthened hypotheses in Corollary \ref{differentiate0} hold so that the non-commutative function defined by the equalities
$$\mathcal{R}^{(n)}(b) = \lim_{t\downarrow 0} \frac{ F^{(n)}_{\mu_{t}}(b^{-1}) - b^{-1}}{t} $$
is uniformly analytic at  $0$ and, by continuation, is an extension of $\Phi^{(n)}(b^{-1})$ for each $n$.
Thus, $\Phi \in \Lambda$, completing our proof.
\end{proof}

The following proposition establishes continuity in generating the semigroups, and may be useful in future applications.

\begin{proposition}
Assume that $\Phi_{1} , \Phi_{2} \in \tilde{\Lambda}$ generate the semigroups of noncommutative functions $\{ F_{1}(t,\cdot) \}_{t \geq 0}$ and $\{ F_{2}(t,\cdot) \}_{t \geq 0}$.
If we assume that $\| \Phi^{(n)}_{1}(b) - \Phi^{(n)}_{2}(b) \| < \epsilon$ for all $b \in B_{\delta}(b') \subset M_{n}(\mathcal{B})$, a ball of radius $\delta$ where $\Im{(b')} > \delta1_{n}$,
then we may conclude that $\| F^{(n)}_{1}(1,b) - F^{(n)}_{2}(1,b)\| < C\epsilon$  for all $b \in B_{\delta}(b')$  where $C$ depends only on $\Phi_{1}$.
\end{proposition}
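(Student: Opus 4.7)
The plan is to apply Gr\"onwall's inequality to the integral equation satisfied by the two semigroups. By Proposition~\ref{differentiate0} applied to each $\Phi_i$, the semigroups satisfy
\begin{equation*}
F_i^{(n)}(t, b) = b - \int_0^t \Phi_i^{(n)}\bigl(F_i^{(n)}(s, b)\bigr) \, ds, \qquad i = 1,2.
\end{equation*}
Setting $E(t) := \|F_1^{(n)}(t,b) - F_2^{(n)}(t,b)\|$, I would subtract these equations and insert $\pm \Phi_1^{(n)}(F_2^{(n)}(s,b))$ to split the integrand into two pieces: one measuring the spreading of the flow under the single map $\Phi_1$, and one measuring the discrepancy between $\Phi_1$ and $\Phi_2$.

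Next I would control each piece. For the first piece, since $\Phi_1 \in \tilde{\Lambda}$ is uniformly bounded on $\sqcup_n M_n^{+,\epsilon'}(\mathcal{B})$ for any $\epsilon' > 0$, and since condition (iii) of Theorem~\ref{generation} gives $\Im F_i^{(n)}(s,b) \geq \Im b$ throughout the flow, the trajectories $F_i^{(n)}(s,b)$ remain in a fixed slice $M_n^{+,\delta_0}(\mathcal{B})$ on which the Cauchy estimate (Theorem~\ref{Cauchy}) provides a Lipschitz constant $L$ for $\Phi_1^{(n)}$ depending only on $\Phi_1$ and on $\delta_0$. Hence
\begin{equation*}
\|\Phi_1^{(n)}(F_1^{(n)}(s,b)) - \Phi_1^{(n)}(F_2^{(n)}(s,b))\| \leq L\, E(s).
\end{equation*}
For the second piece, provided the trajectory $F_2^{(n)}(s,b)$ stays in the ball $B_\delta(b')$ for $s \in [0,1]$, the hypothesis gives
\begin{equation*}
\|\Phi_1^{(n)}(F_2^{(n)}(s,b)) - \Phi_2^{(n)}(F_2^{(n)}(s,b))\| < \epsilon.
\end{equation*}
Putting these estimates together yields the integral inequality
\begin{equation*}
E(t) \leq \epsilon\, t + L \int_0^t E(s)\,ds,
\end{equation*}
and Gr\"onwall's lemma gives $E(1) \leq \epsilon L^{-1}(e^L - 1) =: C\epsilon$, with $C$ determined entirely by the Lipschitz constant $L$ of $\Phi_1$.

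The main obstacle is confirming that the trajectory $F_2^{(n)}(s,b)$ remains in the ball $B_\delta(b')$ for $s \in [0,1]$, which is the regime where the $\epsilon$-closeness hypothesis is valid. This is delicate because the flow moves upward in the imaginary direction by condition (iii), but is not confined in the real direction a priori. The argument can be closed either by shrinking the initial ball and invoking the uniform convergence $F_2^{(n)}(s,b) \to b$ as $s \downarrow 0$ from condition (i) of Theorem~\ref{generation} to guarantee the trajectory stays in a prescribed neighborhood on a short time interval, and then iterating (at the cost of enlarging $C$), or by strengthening the hypothesis so that the $\epsilon$-closeness of $\Phi_1$ and $\Phi_2$ holds on the slightly larger tube $\{F_2^{(n)}(s,b): b \in B_\delta(b'),\ s \in [0,1]\}$. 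Either way, the exponential factor in $C$ comes solely from the Lipschitz constant of $\Phi_1$ on the fixed slice into which both flows are trapped.
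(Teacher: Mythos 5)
Your approach is sound and arrives at the same constant $C = (e^{L}-1)/L$ as the paper, but you get there by a genuinely different route. The paper establishes that each $F_i(t,b)$ is twice differentiable in $t$ with a uniform bound $M_2$ on the second derivative (via the vector-valued chain rule and uniform boundedness of $\delta\Phi$), then runs a discrete Euler iteration with step size $1/N$, using the second-order Taylor remainder to bound the local error by $M_2/N^2$, and finally lets $N \uparrow \infty$. You instead subtract the two integral equations, insert the cross term $\pm\Phi_1(F_2(s,b))$, and invoke Gr\"onwall. The continuous argument is shorter and avoids needing to establish second-order smoothness of the flow; the paper's discrete argument is more self-contained in that it only appeals to facts already in hand (Taylor remainder and the Cauchy estimate from Theorem~\ref{Cauchy}). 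Both approaches use the same two ingredients: the uniform Lipschitz constant of $\Phi_1$ on the slice $M_n^{+,\delta'}(\mathcal{B})$ containing the flows, and the $\epsilon$-closeness hypothesis evaluated along the $\Phi_2$-trajectory.

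The one issue you raise — that the hypothesis gives $\epsilon$-closeness only on $B_\delta(b')$, while $F_2^{(n)}(s,b)$ may leave this ball as $s$ ranges over $[0,1]$ — is a real one, and to your credit you flag it explicitly. Note that the paper's own proof has the same gap: the displayed chain of inequalities silently applies the hypothesis at $F_2^{(n)}(b,t_0)$ without verifying that this point lies in $B_\delta(b')$. The most natural reading of the proposition is that $\epsilon$-closeness is assumed to hold on a set large enough to contain both trajectories up to time $1$ (your second proposed fix), in which case both arguments close. So your proof is correct under the same implicit reading as the paper's, and your acknowledgment of the issue is an improvement over the paper's treatment.
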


\begin{proof}
To prove our claim, we first note that, by the vector-valued chain rule,
$$ \frac{\delta^{2}F^{(n)}_{i}(t,b)}{\delta t^{2}} =  \delta \Phi^{(n)} \left( F^{(n)}_{i}(t,b) , \frac{\delta}{\delta t} F^{(n)}(b,t)  \right)$$
so that $F_{i}(t,b)$ is twice differentiable in $t$ and has uniformly bounded derivative for $b \in H^{+,\epsilon}(\mathcal{B})$ and $t \in [0,1]$.
We refer to the maximum of this bound over $i=1,2$ as $M_{2}$.

Using the remainder estimates for the Taylor series associated to $F_{i}$, we have the following:
\begin{equation}\label{gamma}
\| F_{i}(b , t + \gamma) -  F_{i}(b , t)  - \gamma \Phi(F_{i}(b,t)) \| \leq \frac{M_{2} \gamma^2}{2}
\end{equation}

Let $M_{1} = \sup_{b \in M_{n}^{+,\epsilon}(\mathcal{B}) , \ n \in \mathbb{N} } \| \delta \Phi^{(n)}(b, \cdot) \|$.
Utilizing the estimate \eqref{gamma} with $\gamma = 1/N$, we produce the following inequalities:
\begin{align*}
 \| &  F_{1}^{(n)}(b, t_{0} + 1/N)  -  F_{2}^{(n)}(b, t_{0} + 1/N)\| \\
&  \leq \frac{M_{2}}{N^{2}}  + \frac{1}{N} \| \Phi_{1}^{(n)}(F_{1}^{(n)}(b,t_{0})) - \Phi_{2}^{(n)}(F_{2}^{(n)}(b,t_{0})) \| + \| F_{1}^{(n)}(b,t_{0}) - F_{2}^{(n)}(b,t_{0})\| \\
& \leq \frac{M_{2}}{N^{2}} + \frac{1}{N} \| \Phi_{1}^{(n)}(F_{1}^{(n)}(b,t_{0})) - \Phi_{1}^{(n)}(F_{2}^{(n)}(b,t_{0})) \| \\
& \quad + \frac{1}{N} \| \Phi_{1}^{(n)}(F_{2}^{(n)}(b,t_{0})) - \Phi_{2}^{(n)}(F_{2}^{(n)}(b,t_{0})) \|  +  \| F_{1}^{(n)}(b,t_{0}) - F_{2}^{(n)}(b,t_{0})\| \\
& \leq \frac{M_{2}}{N^{2}} + \frac{\epsilon}{N} + \left(1 +  \frac{M_{1}}{N} \right)\| F_{1}^{(n)}(b,t_{0}) - F_{2}^{(n)}(b,t_{0})\|
\end{align*}
Using this estimate inductively, we have that
\begin{align*}
 \| &  F_{1}^{(n)}(b,1)  -  F_{2}^{(n)}(b,1)\| \leq  \left(  \frac{\epsilon}{N} + \frac{M_{2}}{N^{2}} \right) \sum_{k=0}^{N-1}  \left( 1 + \frac{M_{1}}{N} \right)^{k} \rightarrow \frac{e^{M_{1}} - 1}{M_{1}} \epsilon
\end{align*}
where the convergence occurs as $N \uparrow \infty$.  This implies our result.

\end{proof}

\section{The Bercovici-Pata Bijection.}\label{combinatorics}

\begin{Defn}
Let $(S, \prec)$ be a poset. An \emph{order} on $S$ is an order-preserving bijection
\[
f : (S, \prec) \rightarrow \left(\set{1, 2, \ldots, \abs{S}}, < \right).
\]
Denote by $o(S)$ the number of different orders on $S$.
\end{Defn}

\begin{Lemma}
\label{Lemma:Order}
Let $(S, \prec)$ be a poset, and $S = U \sqcup V$ a partition of $S$. $U$ and $V$ are posets with the induced order.
\begin{enumerate}
\item
Suppose for all $u \in U$ and $v \in V$, $u \prec v$. Then
\[
o(S) = o(U) o(V).
\]
\item
Suppose that all $u \in U$ and $v \in V$, $u$ and $v$ are unrelated to each other. Then
\[
\frac{o(S)}{\abs{S}!} = \frac{o(U)}{\abs{U}!} \frac{o(V)}{\abs{V}!}.
\]
\end{enumerate}
\end{Lemma}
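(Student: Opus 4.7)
The plan is to argue both parts by setting up natural bijections between the set of orders on $S$ and certain products of orders on $U$ and $V$.

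For part (1), I would observe that if every element of $U$ is less than every element of $V$ in $\prec$, then any order-preserving bijection $f : S \to \{1, \ldots, \abs{S}\}$ must send $U$ bijectively onto $\{1, \ldots, \abs{U}\}$ and $V$ bijectively onto $\{\abs{U}+1, \ldots, \abs{S}\}$; conversely, any pair consisting of an order on $U$ and an order on $V$ (the latter shifted by $\abs{U}$) glues together to an order on $S$, because no additional comparisons between $U$ and $V$ can be violated. Thus the map $f \mapsto (f|_U, f|_V - \abs{U})$ is a bijection between orders on $S$ and pairs of orders on $U$ and $V$, giving $o(S) = o(U)\,o(V)$.

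For part (2), an order $f$ on $S$ no longer separates $U$ and $V$, but the restriction $f|_U$ is an order-preserving injection $U \hookrightarrow \{1,\ldots,\abs{S}\}$; composing with the unique order-preserving bijection from its image to $\{1,\ldots,\abs{U}\}$ produces an order on $U$, and similarly for $V$. Conversely, given a choice of an $\abs{U}$-element subset $T \subset \{1,\ldots,\abs{S}\}$, an order on $U$, and an order on $V$, one reconstructs a unique map $f : S \to \{1,\ldots,\abs{S}\}$; since $U$ and $V$ have no relations between them in $\prec$, this $f$ is automatically an order on $S$. This gives the bijection
\[
\{\text{orders on } S\} \longleftrightarrow \binom{\{1,\ldots,\abs{S}\}}{\abs{U}} \times \{\text{orders on } U\} \times \{\text{orders on } V\},
\]
so $o(S) = \binom{\abs{S}}{\abs{U}} o(U)\,o(V)$, and dividing by $\abs{S}!$ yields the claimed identity.

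The main subtlety — really the only one — is checking that the reconstruction in part (2) actually produces an order on $S$, that is, that no comparison in $\prec$ is violated. This uses the hypothesis of complete non-relatedness between $U$ and $V$ in an essential way: the only comparisons to check are within $U$ or within $V$, and those are preserved by construction. I would write out this verification carefully and leave the bookkeeping of the counting as routine.
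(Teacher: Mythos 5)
Your proposal is correct and takes essentially the same approach as the paper: part (1) by the observation that the order must send $U$ to the initial segment, and part (2) by the bijection with triples (subset, order on $U$, order on $V$). The paper states this more tersely (it calls part (a) obvious and gives the part (b) bijection in one line), but the underlying argument is identical; your explicit check that the reconstructed map in part (2) is order-preserving is the right thing to verify.
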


\begin{proof}
Part (a) is obvious. It is also clear that there is a bijection between the orders on $S$ and triples
\[
\set{\text{order on $U$, order on $V$, a subset of $\set{1, 2, \ldots, \abs{S}}$ of cardinality $\abs{U}$}}.
\]
Therefore
\[
o(S) = \binom{\abs{S}}{\abs{U}} o(U) o(V).
\]
This implies part (b).
\end{proof}

\begin{Defn}
For a non-crossing partition $\pi = \set{V_1, V_2, \ldots, V_k}$, define a partial order on it as follows: for $U, V \in \pi$,
\[
U \prec V \text{ if } \exists i, j \in U \ \forall v \in V : i < v < j.
\]
In this case we say that $U$ \emph{covers} $V$. Minimal elements with respect to this order are called the \emph{outer} blocks of $\pi$; the rest are the \emph{inner} blocks.
\end{Defn}

See \cite{Hasebe-Saigo-Monotone-cumulants,Hasebe-Saigo-Operator-monotone} for more on orders on non-crossing partitions.

\begin{Defn}
Let $\mu : \mc{B} \langle X \rangle \rightarrow \mc{B}$ be a $\mc{B}$-bimodule map; at this point no positivity assumptions are made. Its \emph{monotone cumulant functional} is the $\mc{B}$-bimodule map $K^\mu : \mc{B}_0 \langle X \rangle \rightarrow \mc{B}$ defined implicitly by
\[
\mu[b_0 X  b_1 X \ldots b_{n-1} X b_n]
= \sum_{\pi \in \NC(n)} \frac{o(\pi)}{\abs{\pi}!} K^\mu_\pi[b_0 X  b_1 X \ldots b_{n-1} X b_n].
\]
Here $K_\pi$ is defined in the usual way as in \cite{SpeHab}, see Section~3 of \cite{Ans-Bel-Fev-Nica} for a detailed discussion.
\end{Defn}

\begin{Remark}
For $n \in \mf{N}$, we note that
\[
K^{1_n \otimes \mu} = 1_n \otimes K^\mu.
\]
The proof of this fact is identical to that of Proposition~6.3 of \cite{Popa-Vinnikov-NC-functions}.

It follows that the generating function arguments in the rest of this section work equally well for each $1_n \otimes \mu$, and so the corresponding generating functions completely determine the states.
\end{Remark}

\begin{Lemma}\label{MntCnt}
\label{Lemma:Continuity}
For $\mc{B}$-bimodule maps, $\mu_i \rightarrow \mu$ if and only if $K^{\mu_i} \rightarrow K^\mu$.
\end{Lemma}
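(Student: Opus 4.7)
The plan is to exploit the triangularity of the moment--cumulant relation and argue by induction on the degree $n$ of the monomial $b_0 X b_1 X \cdots X b_n$. Here convergence $\mu_i \to \mu$ is understood in the sense of $\mc{B}$-bimodule maps, i.e.\ pointwise on each monomial. First I would isolate the top-degree term in the moment--cumulant formula. Among all $\pi \in \NC(n)$, only the one-block partition $\hat{1}_n$ contributes a summand of the form $K^\mu[b_0 X b_1 \cdots X b_n]$ itself, and this summand comes with coefficient $o(\hat{1}_n)/|\hat{1}_n|! = 1/1 = 1$, since a one-element poset has a unique order. Every other $\pi$ has $|\pi| \geq 2$ blocks, and hence each block of $\pi$ has cardinality strictly less than $n$, so $K^\mu_\pi[b_0 X b_1 \cdots X b_n]$ is a product/nested expression in values of $K^\mu$ on monomials with fewer than $n$ copies of $X$.

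Rearranging the defining equation gives
\[
K^\mu[b_0 X b_1 \cdots X b_n]
= \mu[b_0 X b_1 \cdots X b_n] - \sum_{\substack{\pi \in \NC(n) \\ \pi \neq \hat{1}_n}} \frac{o(\pi)}{\abs{\pi}!} K^\mu_\pi[b_0 X b_1 \cdots X b_n],
\]
a triangular inversion formula expressing $K^\mu$ on degree-$n$ monomials in terms of $\mu$ on degree-$n$ monomials and $K^\mu$ on strictly lower-degree monomials.

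For the forward direction, assume $\mu_i \to \mu$. The base case $n = 1$ is immediate since $K^\mu[b_0 X b_1] = \mu[b_0 X b_1]$. Assuming inductively that $K^{\mu_i} \to K^\mu$ on all monomials of degree less than $n$, the inversion formula above together with continuity of the algebra operations in $\mc{B}$ yields $K^{\mu_i}[b_0 X b_1 \cdots X b_n] \to K^\mu[b_0 X b_1 \cdots X b_n]$, completing the induction. For the reverse direction, the original moment--cumulant formula writes $\mu[b_0 X b_1 \cdots X b_n]$ as a finite sum, with fixed numerical coefficients $o(\pi)/\abs{\pi}!$, of products of values of $K^\mu$; so convergence of $K^{\mu_i}$ on all monomials of degree $\leq n$ immediately implies convergence of $\mu_i$ on degree-$n$ monomials.

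There is no serious obstacle; the entire argument is bookkeeping around the fact that $\hat{1}_n$ is the unique partition whose single block has size $n$. The only thing that needs care is the induction hypothesis: one must track that for $\pi \neq \hat{1}_n$ every block $V \in \pi$ has $\abs{V} < n$, so that $K^\mu_\pi$ really is a polynomial expression in cumulant values on monomials of strictly smaller degree, to which the inductive hypothesis applies.
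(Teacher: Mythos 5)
Your proof is correct, and it is exactly the argument the paper intends: the paper's proof is the one-line remark that "one implication is clear, and the other follows by induction on $n$," and your write-up is the natural elaboration of that sketch, making explicit the triangular inversion via the coefficient $o(\hat{1}_n)/\abs{\hat{1}_n}! = 1$ of the one-block partition and the fact that all other $\pi \in \NC(n)$ involve only cumulants of degree strictly less than $n$.
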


\begin{proof}
By assumption, $\mu_n[b] = b = \mu[b]$. For $n \geq 1$, one implication is clear, and the other follows by induction on $n$.
\end{proof}

\begin{Defn}
For $\mu$ as above and $\eta : \mc{B} \rightarrow \mc{B}$ a linear map, define $\mu^{\rhd \eta}$ via
\[
K^{\mu^{\rhd \eta}}[b_0 X  b_1 X \ldots b_{n-1} X b_n] = b_0 \eta \left(K^\mu[X  b_1 X \ldots b_{n-1} X]\right) b_n.
\]
\end{Defn}

%\begin{Prop}
%For $\eta \in \mf{N}$, the definition above coincides with that in \cite{Popa-Monotonic-Cstar} and \cite{Hasebe-Saigo-Operator-monotone}.
%\end{Prop}

Define the formal generating functions
\[
H^\mu(b) = \sum_{n=0}^\infty \mu[b (X b)^n]
\]
and
\[
K^\mu(b) = \sum_{n=1}^\infty K^\mu[b (X b)^n].
\]
Note that as formal series,
\[
H^\mu(b) = G^\mu(b^{-1}),
\]
so our notation is consistent with the analytic function notation in the rest of the article, except that we use superscripts for formal series. Note also that these generating functions differ by a factor of $b$ from the more standard ones, and are more appropriate for the computations with monotone convolution.

The following results may be contained in \cite{Popa-Monotonic-Cstar}, and are closely related to Proposition~3.5 in \cite{Hasebe-Saigo-Operator-monotone}. We provide a purely combinatorial direct proof.

\begin{Prop}
\label{Prop:Evolution}
Let $\mu : \mc{B} \langle X \rangle \rightarrow \mc{B}$ be an exponentially bounded $\mc{B}$-bimodule map. Then for each $d$
\[
\frac{d H^{(1_d \otimes \mu)^{\rhd t}}(b)}{d t}  = K^{1_d \otimes \mu}(H^{(1_d \otimes \mu)^{\rhd t}}(b)).
\]
\end{Prop}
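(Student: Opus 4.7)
The plan is to verify the identity by expanding both sides as formal power series in $t$ via the moment-cumulant formula and matching coefficients combinatorially. The Remark preceding Lemma~\ref{Lemma:Continuity} allows me to work with $\mu$ itself and reinstate $1_d \otimes \mu$ at the end. From the definition of $\mu^{\rhd \eta}$ applied with $\eta = t \cdot \mathrm{id}_{\mathcal{B}}$, one has $K^{\mu^{\rhd t}} = t K^\mu$ and hence $K^{\mu^{\rhd t}}_\pi = t^{|\pi|} K^\mu_\pi$ for every NC partition $\pi$. The moment-cumulant formula then gives
\[
H^{\mu^{\rhd t}}(b) = b + \sum_{n \geq 1}\sum_{\pi \in \NC(n)} \frac{o(\pi)\, t^{|\pi|}}{|\pi|!}\, K^\mu_\pi[b(Xb)^n],
\]
whose termwise derivative in $t$ yields the LHS
\[
\frac{dH^{\mu^{\rhd t}}(b)}{dt} = \sum_{n \geq 1}\sum_{\pi \in \NC(n)} \frac{o(\pi)\, t^{|\pi|-1}}{(|\pi|-1)!}\, K^\mu_\pi[b(Xb)^n].
\]

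For the RHS I substitute the same moment-cumulant series into every occurrence of $H^{\mu^{\rhd t}}(b)$ in
\[
K^\mu(H^{\mu^{\rhd t}}(b)) = \sum_{m \geq 1} K^\mu\!\left[H^{\mu^{\rhd t}}(b)\bigl(X H^{\mu^{\rhd t}}(b)\bigr)^m\right]
\]
and use the $\mathcal{B}$-bimodularity of $K^\mu$ to pull the resulting $\mathcal{B}$-valued sub-cumulants inside the bracket. The expansion is indexed by tuples $(m, k_0, \ldots, k_m, \sigma_0, \ldots, \sigma_m)$ with $m \geq 1$, $k_i \geq 0$, and $\sigma_i \in \NC(k_i)$; each such tuple corresponds bijectively to a pair $(\pi, V)$ where $\pi \in \NC(n)$ for $n = m + \sum_i k_i$ and $V = \{p_1 < \cdots < p_m\}$ is an outer block of $\pi$, with $\sigma_i$ filling the $i$-th of the $m+1$ gaps cut out by $V$ ($\sigma_0$ to the left of $V$, $\sigma_m$ to the right, and $\sigma_1, \ldots, \sigma_{m-1}$ strictly inside $V$). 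A direct check against the nested-cumulant contraction algorithm (already illustrated by the two-singleton case $\pi = \{\{1\},\{2\}\}$) shows that each tuple contributes exactly $\prod_{i=0}^m \frac{o(\sigma_i)\, t^{|\sigma_i|}}{|\sigma_i|!}\, K^\mu_\pi[b(Xb)^n]$: bimodularity correctly handles both the inside sub-partitions (which enter as $\mathcal{B}$-valued arguments of $V$'s cumulant) and the outside ones (which multiply on the flanks), yielding the same result as $K^\mu_\pi$.

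After factoring $t^{|\pi|-1}K^\mu_\pi[b(Xb)^n]$ out of each term (using $\sum_i |\sigma_i| = |\pi|-1$), matching the two expansions reduces to the combinatorial identity
\[
\sum_{V \text{ outer in }\pi} \prod_{i=0}^m \frac{o(\sigma_i^V)}{|\sigma_i^V|!} = \frac{|\pi|\, o(\pi)}{|\pi|!} \qquad \text{for each } \pi \in \NC(n).
\]
For a fixed outer block $V$, I apply Lemma~\ref{Lemma:Order} iteratively: part (b) twice to peel off $\sigma_0$ and $\sigma_m$ (each unrelated to the rest of $\pi$), part (a) to place the singleton $\{V\}$ above $\sigma_1 \sqcup \cdots \sqcup \sigma_{m-1}$ (all of which $V$ covers), and part (b) once more to separate the mutually unrelated $\sigma_1, \ldots, \sigma_{m-1}$. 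The net effect is
\[
\prod_{i=0}^m \frac{o(\sigma_i^V)}{|\sigma_i^V|!} = |\pi'(V)| \cdot \frac{o(\pi)}{|\pi|!},
\]
where $|\pi'(V)| = 1 + \sum_{i=1}^{m-1} |\sigma_i^V|$ counts $V$ together with every block nested strictly below it. A short NC-crossing argument shows that the order $\prec$ on the blocks of $\pi$ is a forest whose roots are precisely the outer blocks: if $U_1, U_2$ both cover some $V$, then comparing $\min U_1, \min U_2$ and $\max U_1, \max U_2$ and invoking non-crossing forces $U_1, U_2$ to be comparable. Consequently each block has a unique outer ancestor and $\sum_{V \text{ outer}} |\pi'(V)| = |\pi|$, which closes the identity.

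The main obstacle is the middle step: verifying that the "cumulant of cumulants" produced by the RHS expansion really reproduces the nested cumulant $K^\mu_\pi$ uniformly over both the inside and the outside sub-partitions, which is where the operator-valued structure (as opposed to scalar) could potentially misbehave. Once this is in hand, the rest is bookkeeping with Lemma~\ref{Lemma:Order} and the forest structure of NC partitions, and Lemma~\ref{Lemma:Continuity} promotes the formal identity to the required analytic statement.
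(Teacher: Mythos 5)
Your proposal is correct and follows essentially the same route as the paper's: match the coefficient of $K^\mu_\pi$ in the two power series, parametrize the right-hand contribution by a choice of outer block $V$, and apply Lemma~\ref{Lemma:Order} to evaluate the resulting sum --- your per-$V$ computation $\prod_i o(\sigma_i^V)/|\sigma_i^V|! = |c(V)|\,o(\pi)/|\pi|!$ together with the forest argument for $\sum_{V\ \mathrm{outer}}|c(V)|=|\pi|$ is just a slight rearrangement of the paper's chain of equalities, and the forest structure you make explicit is left implicit there. One small correction: the promotion from formal series to the analytic identity is handled in the paper not by Lemma~\ref{Lemma:Continuity} (which concerns convergence of moments vs.\ cumulants) but by using the exponential bound on $\mu$ to get absolute convergence of both series on a small ball about $0$, and then appealing to Theorem~2.10 of \cite{Belinschi-Popa-Vinnikov-ID} to extend the resulting equality of difference quotients to all bounded subsets.
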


\begin{proof}
It suffices to prove the result for $d=1$. We begin by proving this equality for each of the coefficients of the series expansions of
$H^{\mu^{\rhd t}}$ and $K^\mu \circ H^{\mu^{\rhd t}}$.

Towards this end, fix $n \in \mathbb{N}$ and  $\pi \in \NC(n)$. Denote by $V_1, \ldots, V_k$ the outer blocks of $\pi$, by $c(V_i)$ the partition consisting of $V_i$ and the inner blocks it covers, and by $c_j(V_i)$, $j = 1, 2, \ldots, \abs{V_i} - 1$ the partition consisting of the inner blocks lying between the $j$th and the $(j+1)$st elements of $V_i$. By Lemma~\ref{Lemma:Order} part (b),
\[
\frac{o(\pi)}{\abs{\pi}!} = \prod_{i=1}^k \frac{o(c(V_i))}{\abs{c(V_i)}!}.
\]
By part (a) of the lemma,
\[
o(c(V_i)) = o \left( \bigcup_{j=1}^{\abs{V_i} - 1} c_j(V_i) \right)
\]
and so by part (b),
\[
\frac{o(c(V_i))}{(\abs{c(V_i)} - 1)!} = \prod_{j=1}^{\abs{V_i} - 1} \frac{o(c_j(V_i))}{\abs{c_j(V_i)}!}.
\]
Since
\[
\begin{split}
\frac{d}{d t} \mu^{\rhd t}[b (X b)^n]
& = \frac{d}{d t} \sum_{\pi \in \NC(n)} t^{\abs{\pi}} \frac{o(\pi)}{\abs{\pi}!} K^\mu_\pi[b X b X \ldots b X b] \\
& = \sum_{\pi \in \NC(n)} t^{\abs{\pi}-1} \frac{o(\pi)}{(\abs{\pi} - 1)!} K^\mu_\pi[b X b X \ldots b X b],
\end{split}
\]
the coefficient of $K^\mu_\pi[b (X b)^n]$ in its expansion is $t^{\abs{\pi}-1} \frac{o(\pi)}{(\abs{\pi} - 1)!}$. On the other hand,
\[
\begin{split}
& K^\mu \left[H^{\mu^{\rhd t}}(b) \left(X H^{\mu^{\rhd t}}(b) \right)^l \right] \\
& = K^\mu \left[H^{\mu^{\rhd t}}(b) X H^{\mu^{\rhd t}}(b) X \ldots H^{\mu^{\rhd t}}(b) X H^{\mu^{\rhd t}}(b) \right] \\
& = \sum_{k_0, \ldots, k_l \geq 0} K^\mu \left[\sum_{\pi_0 \in \NC(k_0)} t^{\abs{\pi_0}} \frac{o(\pi_0)}{\abs{\pi_0}!} K^\mu_{\pi_0} X \sum_{\pi_1 \in \NC(k_1)} t^{\abs{\pi_1}} \frac{o(\pi_1)}{\abs{\pi_1}!} K^\mu_{\pi_1} X \ldots X \sum_{\pi_l \in \NC(k_l)} t^{\abs{\pi_l}} \frac{o(\pi_l)}{\abs{\pi_l}!} K^\mu_{\pi_l} \right] \\
& = \sum_{k_0, \ldots, k_l \geq 0} \sum_{\substack{\pi_0 \in \NC(k_0), \\ \vdots \\ \pi_l \in \NC(k_l)}} \frac{o(\pi_0)}{\abs{\pi_0}!} \frac{o(\pi_1)}{\abs{\pi_1}!} \ldots \frac{o(\pi_l)}{\abs{\pi_l}!}
K^\mu \left[ K^\mu_{\pi_0} X  K^\mu_{\pi_1} X \ldots X  K^\mu_{\pi_l} \right] t^{\abs{\pi_0} + \abs{\pi_1} + \ldots + \abs{\pi_l}},
\end{split}
\]
where $K_\emptyset(b) = b$. Fixing $n = k_0 + \ldots + k_l + l$, each term in this expansion is a multiple of $K^\mu_\pi[b (X b)^n]$, where $\pi$ is constructed from partitions $\pi_0, \pi_1, \ldots, \pi_k$ and an additional outer block of $l$ elements:
\[
V = \set{k_0 + 1, k_0 + k_1 + 2, \ldots, k_0 + \ldots + k_{l-1} + l} \in \pi
\]
and
\[
\pi_i = \text{ restriction of } \pi \text{ to } [k_0 + \ldots + k_{i-1} + i + 1, k_0 + \ldots + k_i + i], \quad i = 0, 1, \ldots, l.
\]
Note that $\abs{\pi_0} + \abs{\pi_1} + \ldots + \abs{\pi_l} = \abs{\pi} - 1$. This identification has an inverse, which requires first choosing one of the $k$ outer blocks of $\pi$. Therefore the coefficient of $K^\mu_\pi[b (X b)^n]$ in the expansion of $K^\mu(H^{\mu^{\rhd t}}(b))$ is $t^{\abs{\pi}-1}$ times
\[
\begin{split}
\sum_{i=1}^k \frac{o \left( \bigcup_{j < i} c(V_j) \right)}{\abs{\bigcup_{j < i} c(V_j)}!} \left( \prod_{j=1}^{\abs{V_i} - 1} \frac{o(c_j(V_i))}{\abs{c_j(V_i)}!} \right) \frac{o \left( \bigcup_{j > i} c(V_j) \right)}{\abs{\bigcup_{j > i} c(V_j)}!}
& = \sum_{i=1}^k \frac{o(c(V_i))}{(\abs{c(V_i)} - 1)!} \prod_{j \neq i} \frac{o(c(V_j))}{\abs{c(V_j)}!} \\
& = \prod_{i=1}^k \frac{o(c(V_i))}{\abs{c(V_i)}!} \sum_{i=1}^k \abs{c(V_i)} \\
& = \abs{\pi} \prod_{i=1}^k \frac{o(c(V_i))}{\abs{c(V_i)}!}.
\end{split}
\]
Since this is the same coefficient as in the first expansion, the result is proved for each of the individual components of the respective series expansions for each $n \in \mathbb{N}$.

Extending this to the series expansions and, therefore, the functions, observe that all of the sets over which the sums occur have cardinality whose growth rate is exponential over $n$.  Thus, for $\| b \|$ small enough, the exponential boundedness of $\mu$ implies that the respective series are absolutely convergent.  We may therefore conclude that the $t$ coefficients of the series expansions agree, provided that $b \in B_{\delta}(0)$ for $\delta > 0$ small enough.  Thus,
\[
\frac{d H^{\mu^{\rhd t}}(b)}{d t}  = K^\mu(H^{\mu^{\rhd t}}(b)).
\]
for $b \in B_{\delta}(0)$.

To extend to arbitrary bounded  sets in $\mathcal{B}^{-}$, consider the net of difference quotients
$$ D^{\mu}_{h}(b,t) =   \frac{H^{\mu^{\rhd t + h}}(b)  - H^{\mu^{\rhd t}}(b) }{h} $$
for $t > 0$.
We have just shown that $$   \lim_{h \rightarrow0}D^{\mu}_{h}(b,t) \rightarrow  K^\mu(H^{\mu^{\rhd t}}(b))$$
uniformly on $B_{\delta}(0)$.  By Theorem $2.10$ in \cite{Belinschi-Popa-Vinnikov-ID}, this implies that the same is true on all bounded sets in $\mathcal{B}^{-}$.
%(they have better references in their paper).
Thus, at the level of functions,
\[
\frac{d H^{\mu^{\rhd t}}(b)}{d t}  = K^\mu(H^{\mu^{\rhd t}}(b)),
\]
proving our result.
\end{proof}

\begin{Cor}
\[
H^{(1_n \otimes \mu)^{\rhd (s + t)}}(b) = H^{(1_n \otimes \mu)^{\rhd s}} \left( H^{(1_n \otimes \mu)^{\rhd t}}(b) \right).
\]
In particular,
\[
F^{\mu^{\rhd (s + t)}}(b) = F^{\mu^{\rhd s}} \left( F^{\mu^{\rhd t}}(b) \right),
\]
so the combinatorial definition of monotone convolution powers coincides with the complex analytic one in Definition~\ref{Defn:Monotone-convolution}.
\end{Cor}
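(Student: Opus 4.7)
The plan is to derive both identities from Proposition~\ref{Prop:Evolution} by appealing to uniqueness of solutions of the first-order ODE $u'(t) = K^{1_n \otimes \mu}(u(t))$. Fix $s \geq 0$ and set
\[
f(t) := H^{(1_n \otimes \mu)^{\rhd(s+t)}}(b), \qquad \psi(t) := H^{(1_n \otimes \mu)^{\rhd t}}\!\left( H^{(1_n \otimes \mu)^{\rhd s}}(b) \right).
\]
Since $\mu^{\rhd 0}$ is the unit of monotone convolution (its cumulant functional vanishes, so $H^{\mu^{\rhd 0}}(c) = c$), we have $f(0) = \psi(0) = H^{(1_n \otimes \mu)^{\rhd s}}(b)$. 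Proposition~\ref{Prop:Evolution} gives $f'(t) = K^{1_n \otimes \mu}(f(t))$ directly, and also $\psi'(t) = K^{1_n \otimes \mu}(\psi(t))$, because only the outer $H^{\rhd t}$ depends on $t$ so no chain rule intervenes.

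Uniqueness of the resulting initial value problem is where the real work sits, but it is mild. At the level of formal series in $b_0 X b_1 X \cdots X b_n$, induction on the degree shows that each coefficient of $u(t)$ is determined by $u(0)$ together with coefficients of lower degree already determined, yielding uniqueness of a formal solution for free. Lifting this to an honest non-commutative function on a small ball $B^{nc}_{\delta}(0)$ relies on the exponential bound on $\mu$ exactly as in the convergence step at the end of the proof of Proposition~\ref{Prop:Evolution}, and the extension to all bounded sets of $\mathcal{B}^{-}$ follows via Theorem~2.10 of \cite{Belinschi-Popa-Vinnikov-ID}. Hence $f \equiv \psi$, giving $H^{\rhd(s+t)}(b) = H^{\rhd t}(H^{\rhd s}(b))$. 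Swapping $s$ and $t$ (legitimate because $s + t = t + s$) yields the stated form $H^{\rhd(s+t)}(b) = H^{\rhd s}(H^{\rhd t}(b))$.

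For the $F$-transform statement, recall the algebraic identities $H^{\mu}(x) = F^{\mu}(x^{-1})^{-1}$ and dually $F^{\mu}(x) = H^{\mu}(x^{-1})^{-1}$, valid wherever the inverses make sense. Replacing $b$ by $b^{-1}$ in the $H$-identity and inverting both sides converts the left side to $F^{\mu^{\rhd(s+t)}}(b)$ and the right side, via $H^{\mu^{\rhd t}}(b^{-1}) = F^{\mu^{\rhd t}}(b)^{-1}$ and then $H^{\mu^{\rhd s}}(y^{-1})^{-1} = F^{\mu^{\rhd s}}(y)$, to $F^{\mu^{\rhd s}}(F^{\mu^{\rhd t}}(b))$. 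The principal obstacle throughout is the uniqueness step, which is ultimately guaranteed by the local Lipschitz character of the vector field $K^{1_n \otimes \mu}$, itself a consequence of exponential boundedness of $\mu$; everything else is routine.
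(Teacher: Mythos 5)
Your proposal is correct and follows essentially the same route as the paper: appeal to Proposition~\ref{Prop:Evolution} to exhibit both sides of the $H$-identity as solutions of the same ODE with the same initial value, conclude by uniqueness, and then translate to $F$-transforms via $H^{\mu}(b^{-1}) = F^{\mu}(b)^{-1}$ (the paper phrases this last step as a short chain of equalities through $G$-transforms, which is the same algebra). The only cosmetic differences are your choice to fix $s$ and differentiate in $t$ rather than the paper's reverse convention, and your more explicit discussion of the uniqueness step, which the paper simply asserts.
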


\begin{proof}
By Proposition~\ref{Prop:Evolution}, $H^{\mu^{\rhd s}} \left( H^{\mu^{\rhd t}}(b) \right)$, as a function of $s$, satisfies
\[
\frac{d}{d s} H^{\mu^{\rhd s}} \left( H^{\mu^{\rhd t}}(b) \right) = K^\mu \left(H^{\mu^{\rhd s}} \left( H^{\mu^{\rhd t}}(b) \right) \right), \qquad \left. H^{\mu^{\rhd s}} \left( H^{\mu^{\rhd t}}(b) \right) \right|_{s=0} = H^{\mu^{\rhd t}}(b).
\]
Since, by the same proposition, $H^{\mu^{\rhd (s + t)}}(b)$ also satisfies this differential equation with this initial condition, they coincide for all positive $s$.

For the second statement, we observe that
\[
G^{\mu^{\rhd s}} \left( F^{\mu^{\rhd t}}(b) \right)
= G^{\mu^{\rhd s}} \left( \left(G^{\mu^{\rhd t}}(b) \right)^{-1} \right)
= H^{\mu^{\rhd s}} \left( H^{\mu^{\rhd t}}(b^{-1}) \right)
= H^{\mu^{\rhd (s + t)}}(b^{-1})
= G^{\mu^{\rhd (s + t)}}(b).
\]
\end{proof}

\begin{Prop}
If $\mu, \nu \in \Sigma_0$ and $\mu \rhd \mu = \nu \rhd \nu$, then $\mu = \nu$. In particular, if the square root with respect to the monotone convolution exists, it is unique.
\end{Prop}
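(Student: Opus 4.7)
The plan is to pass to monotone cumulants, where the square-root operation reduces to division by the scalar $2$.

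First, I would identify the analytically defined $\mu \rhd \mu$ with the combinatorial $\mu^{\rhd 2}$, meaning $\mu^{\rhd \eta}$ with $\eta = 2 \cdot \mathrm{id}_{\mc{B}}$. This identification is supplied by the preceding Corollary (on both sides of the hypothesis), so the condition $\mu \rhd \mu = \nu \rhd \nu$ becomes $\mu^{\rhd 2} = \nu^{\rhd 2}$.

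Next, I would extract cumulants from both sides. The moment-cumulant relation
\[
\mu[b_0 X b_1 \ldots X b_n] = \sum_{\pi \in \NC(n)} \frac{o(\pi)}{\abs{\pi}!} K^\mu_\pi[b_0 X b_1 \ldots X b_n]
\]
is triangular in the partition size: the single-block partition contributes $K^\mu[b_0 X b_1 \ldots X b_n]$ itself, and the remaining terms involve only cumulants of strictly lower degree. Therefore, as in the proof of Lemma~\ref{Lemma:Continuity}, the map $\mu \mapsto K^\mu$ can be inverted by induction on $n$. In particular equality of distributions is equivalent to equality of cumulant functionals, so the hypothesis promotes to $K^{\mu^{\rhd 2}} = K^{\nu^{\rhd 2}}$.

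On the other hand, the defining formula for $\mu^{\rhd \eta}$ with $\eta = 2 \cdot \mathrm{id}_{\mc{B}}$, combined with the $\mc{B}$-bimodularity of the cumulant functional (inherited inductively from that of $\mu$), yields
\[
K^{\mu^{\rhd 2}}[b_0 X b_1 \ldots X b_n] = 2\, b_0 K^\mu[X b_1 \ldots X] b_n = 2\, K^\mu[b_0 X b_1 \ldots X b_n],
\]
so $K^{\mu^{\rhd 2}} = 2 K^\mu$, and likewise $K^{\nu^{\rhd 2}} = 2 K^\nu$. Combining with the previous paragraph gives $2 K^\mu = 2 K^\nu$, hence $K^\mu = K^\nu$, and finally $\mu = \nu$ by the inversion of the moment-cumulant relation.

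I do not anticipate any substantial obstacle. Once the identification $K^{\mu^{\rhd t}} = t K^\mu$ is in place, uniqueness of the monotone square root is simply the statement that multiplication by the scalar $2$ is injective on cumulant functionals. The only items requiring care are the bookkeeping identifications with the analytic convolution (handled by the Corollary) and the injectivity of $\mu \mapsto K^\mu$ (handled by Lemma~\ref{Lemma:Continuity}).
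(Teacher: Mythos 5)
Your proof is correct and follows essentially the same route as the paper: pass to monotone cumulants, observe $K^{\mu\rhd\mu} = 2K^\mu$, and invoke the bijectivity of $\mu \mapsto K^\mu$ (as in Lemma~\ref{Lemma:Continuity}). The paper's own proof is the one-line version of exactly this argument.
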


\begin{proof}
Under the given assumption,
\[
K^\mu = \frac{1}{2} K^{\mu \rhd \mu} = K^\nu,
\]
and therefore $\mu = \nu$.
\end{proof}

\begin{Remark}
Let $\gamma \in \mc{B}$ be self-adjoint, and $\sigma : \mc{B} \langle X \rangle \rightarrow \mc{B}$ be a completely positive\ but \emph{not} necessarily a $\mc{B}$-bimodule map. Define $\nu_{\uplus}^{\gamma, \sigma}$ via its Boolean cumulant functional
\[
B^{\nu_{\uplus}^{\gamma, \sigma}}[b_0 X b_1] = b_0 \gamma b_1, \quad B^{\nu_{\uplus}^{\gamma, \sigma}}[b_0 X b_1 X \ldots b_{n-1} Xb_n] = b_0 \sigma[b_1 X \ldots b_{n-1}] b_n.
\]
It is known \cite{Belinschi-Popa-Vinnikov-ID,Ans-Bel-Fev-Nica} that $\nu_{\uplus}^{\gamma, \sigma}$ is a completely positive\ $\mc{B}$-bimodule map. Similarly, define $\nu_{\rhd}^{\gamma, \sigma}$ via its monotone cumulant functional
\[
K^{\nu_{\rhd}^{\gamma, \sigma}}[b_0 X b_1] = b_0 \gamma b_1, \quad K^{\nu_{\rhd}^{\gamma, \sigma}}[b_0 X b_1 X \ldots b_{n-1} Xb_n] = b_0 \sigma[b_1 X \ldots b_{n-1}] b_n.
\]
We could also define $\nu_{\boxplus}^{\gamma, \sigma}$ via its free cumulant functional
\[
R^{\nu_{\boxplus}^{\gamma, \sigma}}[b_0 X b_1] = b_0 \gamma b_1, \quad R^{\nu_{\boxplus}^{\gamma, \sigma}}[b_0 X b_1 X \ldots b_{n-1} Xb_n] = b_0 \sigma[b_1 X \ldots b_{n-1}] b_n.
\]
\end{Remark}

\begin{Lemma}
Let $k_i \rightarrow \infty$. For linear $\mc{B}$-bimodule maps $\mu_i : \mc{B} \langle X \rangle \rightarrow \mc{B}$ and $\rho : \mc{B}_0 \langle X \rangle \rightarrow \mc{B}$, the following are equivalent.
\begin{enumerate}
\item
$k_i \mu_i |_{\mc{B}_0} \rightarrow \rho$.
\item
$k_i R^{\mu_i} \rightarrow \rho$.
\item
$k_i B^{\mu_i} \rightarrow \rho$.
\item
$k_i K^{\mu_i} \rightarrow \rho$.
\end{enumerate}
\end{Lemma}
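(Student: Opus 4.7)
The plan is to prove all four conditions equivalent simultaneously by induction on the degree $n$ of the monomial $b_0 X b_1 X \ldots X b_n \in \mc{B}_0 \langle X \rangle$. The unifying observation is that for each of the three kinds of cumulants (free $R$, Boolean $B$, monotone $K$), the moment--cumulant relation has the form
\[
\mu[b_0 X b_1 X \ldots X b_n] = C^\mu[b_0 X b_1 X \ldots X b_n] + \sum_{\substack{\pi \in \mc{S}(n) \\ \abs{\pi} \geq 2}} c_\pi C^\mu_\pi[b_0 X b_1 X \ldots X b_n],
\]
where $\mc{S}(n)$ is the appropriate class of partitions ($\mathcal{I}(n)$, $\NC(n)$, or $\NC(n)$), $c_\pi$ are bounded coefficients (in particular $o(\pi)/\abs{\pi}! \leq 1$ in the monotone case), and $C^\mu$ denotes the relevant cumulant. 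The single-block partition contributes $C^\mu$ itself; every other partition contributes a product of at least two cumulants of strictly lower degree.

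For the base case $n=1$, each of the three moment--cumulant relations reduces to $\mu[b_0 X b_1] = C^\mu[b_0 X b_1]$, so $k_i \mu_i$, $k_i R^{\mu_i}$, $k_i B^{\mu_i}$ and $k_i K^{\mu_i}$ literally coincide on degree-one monomials, establishing equivalence of (1)--(4) at that level.

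For the inductive step, assume (1)--(4) have been shown equivalent at all degrees strictly less than $n$, so in particular each of the cumulant functionals $C^{\mu_i}$ converges (when multiplied by $k_i$) to $\rho$ on monomials of degree $< n$. In particular each unscaled cumulant $C^{\mu_i}[b_0 X \ldots X b_m]$ with $m < n$ is $O(1/k_i)$. Multiplying the moment--cumulant identity by $k_i$ and isolating the single-block term gives
\[
k_i \mu_i[b_0 X b_1 X \ldots X b_n] - k_i C^{\mu_i}[b_0 X b_1 X \ldots X b_n] = \sum_{\substack{\pi \in \mc{S}(n) \\ \abs{\pi} \geq 2}} c_\pi \cdot k_i C^{\mu_i}_\pi[b_0 X b_1 X \ldots X b_n].
\]
Each summand factors as $k_i$ times a product of $\abs{\pi} \geq 2$ cumulants of degree $< n$, hence is $O(k_i \cdot k_i^{-\abs{\pi}}) = O(k_i^{1 - \abs{\pi}}) \to 0$. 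Since $\mc{S}(n)$ is finite and the coefficients $c_\pi$ are bounded, the entire right-hand side vanishes in the limit, yielding
\[
k_i \mu_i[b_0 X b_1 X \ldots X b_n] \rightarrow \rho[b_0 X b_1 X \ldots X b_n] \iff k_i C^{\mu_i}[b_0 X b_1 X \ldots X b_n] \rightarrow \rho[b_0 X b_1 X \ldots X b_n].
\]
Applying this with $C = R$, $C = B$, and $C = K$ completes the induction and establishes the equivalence (1)$\Leftrightarrow$(2)$\Leftrightarrow$(3)$\Leftrightarrow$(4).

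No essential obstacle is expected; the proof is a standard Möbius-style bootstrapping argument. The only point requiring mild care is the bookkeeping for the monotone cumulants, since the coefficients $o(\pi)/\abs{\pi}!$ differ from those in the free and Boolean settings; however they are uniformly bounded in $\pi$, so the asymptotic estimate on multi-block contributions goes through verbatim, and the convergence statement for products of cumulants (which is automatic since everything is happening inside a fixed finite-dimensional subspace of $\mc{B}$-valued multilinear maps on a fixed monomial) does not need any topological subtlety.
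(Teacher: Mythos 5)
Your proof is correct and follows essentially the same argument as the paper: multiply the moment--cumulant relation by $k_i$, observe that the multi-block terms carry a factor $k_i^{1-\abs{\pi}} \to 0$, and bootstrap by induction on degree (the paper spells this out only for $K$ and cites \cite{Belinschi-Popa-Vinnikov-ID} for $R$ and $B$). One small slip: you wrote the partition classes as ``$\mathcal{I}(n)$, $\NC(n)$, or $\NC(n)$'' for free, Boolean, monotone respectively, but the free cumulants are indexed by $\NC(n)$ and the Boolean ones by the interval partitions $\mathcal{I}(n)$; the transposition does not affect the argument since the coefficients remain uniformly bounded in every case.
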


\begin{proof}
We will prove the equivalence between (a) and (d); the rest are similar, and were proved in \cite{Belinschi-Popa-Vinnikov-ID}. Indeed, on $\mc{B}_0 \langle X \rangle$,
\[
\begin{split}
k_i \mu_i[b_0 X  b_1 X \ldots b_{n-1} X b_n]
& = k_i K^{\mu_i}[b_0 X  b_1 X \ldots b_{n-1} X b_n] \\
&\quad + \sum_{\substack{\pi \in \NC(n) \\ \abs{\pi} \geq 2}} \frac{1}{k_i^{\abs{\pi}-1}} \frac{o(\pi)}{\abs{\pi}!} \left(k_i K^{\mu_i}\right)_\pi[b_0 X  b_1 X \ldots b_{n-1} X b_n].
\end{split}
\]
It follows immediately that (d) implies (a). The converse implication follows by induction on $n$.
\end{proof}

\begin{Cor}
For linear $\mc{B}$-bimodule maps $\mu_i : \mc{B} \langle X \rangle \rightarrow \mc{B}$, the following are equivalent.
\begin{enumerate}
\item
\[
k_i \mu_i[X] \rightarrow \gamma, \quad k_i \mu_i[X b_1 X \ldots b_{n-1} X] \rightarrow \sigma[b_1 X \ldots b_{n-1}].
\]
\item
\[
\mu_i^{\boxplus k_i} \rightarrow \nu_{\boxplus}^{\gamma, \sigma}.
\]
\item
\[
\mu_i^{\uplus k_i} \rightarrow \nu_{\uplus}^{\gamma, \sigma}.
\]
\item
\[
\mu_i^{\rhd k_i} \rightarrow \nu_{\rhd}^{\gamma, \sigma}.
\]
\end{enumerate}
\end{Cor}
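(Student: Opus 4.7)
The plan is to reduce the equivalences to convergence statements at the level of cumulant functionals, using the preceding Lemma together with the linearity of cumulants under iterated convolution. By the preceding lemma, condition (a) is equivalent to any one of three cumulant-level convergences: $k_i R^{\mu_i} \to \rho$, $k_i B^{\mu_i} \to \rho$, or $k_i K^{\mu_i} \to \rho$, where $\rho : \mc{B}_0\langle X\rangle \to \mc{B}$ is the bimodule map with $\rho[X] = \gamma$ and $\rho[X b_1 X \ldots b_{n-1}X] = \sigma[b_1 X \ldots b_{n-1}]$ for $n \geq 2$. By the construction in the preceding Remark, $\rho$ coincides respectively with $R^{\nu_\boxplus^{\gamma,\sigma}}$, $B^{\nu_\uplus^{\gamma,\sigma}}$, and $K^{\nu_\rhd^{\gamma,\sigma}}$.

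Next I would invoke the linearity of each cumulant under its respective iterated convolution: for every positive integer $k$,
\[
R^{\mu^{\boxplus k}} = k R^\mu, \qquad B^{\mu^{\uplus k}} = k B^\mu, \qquad K^{\mu^{\rhd k}} = k K^\mu.
\]
The first two are standard and appear in \cite{Belinschi-Popa-Vinnikov-ID}. For the monotone identity, applying the definition of $\mu^{\rhd \eta}$ with $\eta = k \cdot \mathrm{id}_{\mc{B}}$ yields $K^{\mu^{\rhd k \cdot \mathrm{id}}} = k K^\mu$, and the Corollary to Proposition~\ref{Prop:Evolution} identifies this combinatorial power with the analytic iterated convolution $\mu \rhd \mu \rhd \ldots \rhd \mu$. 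This lets me rewrite each of (b), (c), (d) at the cumulant level as the convergences
\[
R^{\mu_i^{\boxplus k_i}} = k_i R^{\mu_i} \to R^{\nu_\boxplus^{\gamma,\sigma}}, \quad B^{\mu_i^{\uplus k_i}} = k_i B^{\mu_i} \to B^{\nu_\uplus^{\gamma,\sigma}}, \quad K^{\mu_i^{\rhd k_i}} = k_i K^{\mu_i} \to K^{\nu_\rhd^{\gamma,\sigma}}.
\]

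Finally, I would appeal to Lemma~\ref{MntCnt} and its free and Boolean analogs to convert convergence of cumulant functionals back into convergence of the underlying bimodule maps. Combined with the first step, this yields the chain (a) $\Leftrightarrow$ (b) $\Leftrightarrow$ (c) $\Leftrightarrow$ (d).

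The main obstacle is the monotone identity $K^{\mu^{\rhd k}} = k K^\mu$, which is not immediate since monotone cumulants are not additive under monotone convolution in the direct manner of free or Boolean cumulants. Establishing it cleanly relies on the semigroup structure built in Proposition~\ref{Prop:Evolution} and its Corollary, which together identify the combinatorial power $\mu^{\rhd k}$ with the analytic iterated convolution and pin down the dependence of its monotone cumulants on $k$. Once that identification is in hand, the remainder of the argument is formal.
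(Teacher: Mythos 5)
Your proposal is correct and follows essentially the same route as the paper: use Lemma~\ref{MntCnt} to reduce (d) to convergence of monotone cumulants, use the identity $K^{\mu_i^{\rhd k_i}} = k_i K^{\mu_i}$ together with the definition of $\nu_\rhd^{\gamma,\sigma}$, and then apply the preceding lemma to equate this with (a). You are right to flag that the identity $K^{\mu^{\rhd k}} = k K^\mu$ is where the real content lies; the paper treats it implicitly, but it follows exactly as you say, from the definition of $\mu^{\rhd \eta}$ with $\eta = k\cdot\mathrm{id}$ combined with the Corollary to Proposition~\ref{Prop:Evolution} identifying the combinatorial and analytic convolution powers.
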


\begin{proof}
We will prove the equivalence between (a) and (d); the rest are similar, see Lecture~13 in \cite{Nica-Speicher-book}. Indeed, by Lemma~\ref{Lemma:Continuity}, the statement in part (d) is equivalent to
\[
k_i K^{\mu_i} \rightarrow K^{\nu_{\rhd}^{\gamma, \sigma}},
\]
which by definition of $\nu_{\rhd}^{\gamma, \sigma}$ means
\[
k_i K^{\mu_i}[X] \rightarrow \gamma, \quad k_i K^{\mu_i}[X b_1 X \ldots b_{n-1} X] \rightarrow \sigma[b_1 X \ldots b_{n-1}]
\]
This is equivalent to (a) by the preceding lemma.
\end{proof}

\begin{Cor}
$\nu_{\rhd}^{\gamma, \sigma}$ is a completely positive\ map.
\end{Cor}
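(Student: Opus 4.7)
The plan is to realize $\nu_{\rhd}^{\gamma,\sigma}$ as a pointwise limit of completely positive\ maps, exploiting the fact that complete positivity is preserved under such limits. First I would invoke the Boolean half of the Bercovici-Pata bijection from \cite{Belinschi-Popa-Vinnikov-ID}: the map $\nu_{\uplus}^{\gamma,\sigma}$ is a bona fide element of $\Sigma_{0}$, and inspecting its cumulant definition, the scaled variant $\mu_{k} := \nu_{\uplus}^{\gamma/k,\sigma/k}$ lies in $\Sigma_{0}$ with $k B^{\mu_{k}}$ equal to $B^{\nu_{\uplus}^{\gamma,\sigma}}$ as a bimodule map on $\mc{B}_{0}\langle X\rangle$, independently of $k$.

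Since $k B^{\mu_{k}}[X] = \gamma$ and $k B^{\mu_{k}}[X b_{1} X \ldots b_{n-1} X] = \sigma[b_{1} X \ldots b_{n-1}]$ for $n \geq 2$, the preceding lemma (equivalence of (c) and (a)) gives $k \mu_{k}[X] \to \gamma$ and $k \mu_{k}[X b_{1} X \ldots b_{n-1} X] \to \sigma[b_{1} X \ldots b_{n-1}]$. Thus hypothesis (a) of the preceding corollary is met with $k_{i} := i$ and the $\mu_{i}$ just constructed, and the corollary delivers the pointwise convergence $\mu_{i}^{\rhd i} \to \nu_{\rhd}^{\gamma,\sigma}$ on $\mc{B}\langle X \rangle$.

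For the final step I would observe that each approximant $\mu_{i}^{\rhd i}$ belongs to $\Sigma_{0}$, since Definition~\ref{Defn:Monotone-convolution}, relying on Theorem~\ref{nevanlinna}, guarantees that monotone convolution preserves $\Sigma_{0}$; in particular each $\mu_{i}^{\rhd i}$ is completely positive. Complete positivity then passes to pointwise limits: for any family $P_{1},\ldots,P_{n} \in \mc{B}\langle X\rangle$ the matrices $[(\mu_{i}^{\rhd i}\otimes 1_{n})(P_{j} P_{l}^{\ast})]_{j,l=1}^{n}$ lie in the norm-closed positive cone of $M_{n}(\mc{B})$, so the limit $[(\nu_{\rhd}^{\gamma,\sigma}\otimes 1_{n})(P_{j} P_{l}^{\ast})]_{j,l=1}^{n}$ is again positive, yielding the claim.

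The main point to verify carefully is that $\nu_{\uplus}^{\gamma/k,\sigma/k}$ really lies in $\Sigma_{0}$ for each $k$: complete positivity is exactly the content of the Boolean Bercovici-Pata theorem cited above, while exponential boundedness follows routinely from the interval-partition expansion of $\nu_{\uplus}^{\gamma/k,\sigma/k}[b_{0} X \ldots X b_{n}]$ in terms of $\gamma$ and $\sigma$, together with the exponential bound on $\sigma$. Everything else is bookkeeping around the preceding corollary and the closedness of the CP cone in $M_{n}(\mc{B})$.
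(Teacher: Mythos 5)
Your proof is correct and takes essentially the same approach as the paper: both choose $\mu_i = \nu_{\uplus}^{\gamma/i,\sigma/i}$, invoke the preceding corollary to obtain $\mu_i^{\rhd i} \rightarrow \nu_{\rhd}^{\gamma,\sigma}$, and pass complete positivity to the limit. The only minor difference is in justifying that each $\mu_i^{\rhd i}$ is completely positive: the paper cites Popa's result that monotone convolution of CP maps is CP, whereas you route through Definition~\ref{Defn:Monotone-convolution} and Theorem~\ref{nevanlinna} to conclude $\mu_i^{\rhd i} \in \Sigma_0$; both are valid.
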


\begin{proof}
We can choose completely positive\ $\mu_i$ such that $\mu_i^{\uplus i} \rightarrow \nu_{\uplus}^{\gamma, \sigma}$, for example by taking $\mu_i = \nu_{\uplus}^{\frac{1}{i} \gamma, \frac{1}{i}\sigma}$. Then $\nu_{\rhd}^{\gamma, \sigma}$ is the limit of completely positive\ maps $\mu_i^{\rhd i}$, and as such is completely positive (monotone convolution of two completely positive\ maps is known to be positive, see Proposition~6.2 of \cite{Popa-Monotonic-Cstar} and also \cite{Popa-Conditionally-monotone}).
\end{proof}

\begin{Prop}
Monotone convolution semigroups of completely positive\ $\mc{B}$-bimodule maps are in a one-to-one correspondence with pairs $(\gamma, \sigma)$ as above.
\end{Prop}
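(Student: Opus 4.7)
The plan is to set up the bijection explicitly as $(\gamma, \sigma) \leftrightarrow \{\mu_t\}_{t \geq 0}$ via $\mu_t = \nu_\rhd^{t\gamma, t\sigma}$, with the reverse map reading off
\[
\gamma := K^{\mu_1}[X], \qquad \sigma[b_1 X \cdots b_{n-1}] := K^{\mu_1}[X b_1 X \cdots b_{n-1} X]
\]
from the monotone cumulant functional of $\mu_1$.

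The key preliminary observation is that monotone cumulants scale linearly along a monotone convolution semigroup. The formal expansion in the proof of Proposition~\ref{Prop:Evolution} gives
\[
\mu^{\rhd t}[b_0 X b_1 \cdots X b_n] = \sum_{\pi \in \NC(n)} \frac{o(\pi)}{\abs{\pi}!} (t K^\mu)_\pi[b_0 X b_1 \cdots X b_n],
\]
and matching this with the defining expansion of $K^{\mu^{\rhd t}}$ forces $K^{\mu^{\rhd t}} = t K^\mu$ as $\mc{B}$-bimodule maps. Applying this identity to $\mu = \nu_\rhd^{\gamma,\sigma}$ gives $(\nu_\rhd^{\gamma,\sigma})^{\rhd t} = \nu_\rhd^{t\gamma, t\sigma}$. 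The corollary to Proposition~\ref{Prop:Evolution} then furnishes the semigroup property $\mu_{s+t} = \mu_s \rhd \mu_t$, and complete positivity of each $\mu_t$ is precisely the content of the preceding corollary. This handles the direction $(\gamma,\sigma) \mapsto \{\mu_t\}$.

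Conversely, given a semigroup $\{\mu_t\}$ of CP $\mc{B}$-bimodule maps, $\mc{B}$-bimodularity of $\mu_1$ lifts to $K^{\mu_1}$, so the values $(\gamma, \sigma)$ above determine $K^{\mu_1}$ completely and give $\mu_1 = \nu_\rhd^{\gamma, \sigma}$ by matching cumulant formulas. Self-adjointness of $\gamma = \mu_1[X]$ is immediate from complete positivity of $\mu_1$. The main obstacle is verifying complete positivity of $\sigma$, which is not directly readable from $K^{\mu_1}$; my plan is to route this through the Boolean case via the preceding Bercovici-Pata corollary. Applied with $\mu_i = \mu_{1/i}$ and $k_i = i$, cumulant scaling $K^{\mu_{1/i}} = \tfrac{1}{i} K^{\mu_1}$ together with the moment-cumulant expansion gives $i \mu_{1/i}[X b_1 X \cdots b_{n-1} X] \to K^{\mu_1}[X b_1 X \cdots b_{n-1} X] = \sigma[b_1 X \cdots b_{n-1}]$ and $i \mu_{1/i}[X] \to \gamma$, so condition (a) of the corollary holds with this pair. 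The corollary then yields $\mu_{1/i}^{\uplus i} \to \nu_\uplus^{\gamma,\sigma}$; since each $\mu_{1/i}$ is CP and Boolean convolution preserves complete positivity \cite{Belinschi-Popa-Vinnikov-ID}, the limit $\nu_\uplus^{\gamma,\sigma}$ is CP, and inspection of its Boolean cumulants forces $\sigma$ to be CP.

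Bijectivity is then built into the construction. Starting from $(\gamma, \sigma)$ and producing $\mu_t = \nu_\rhd^{t\gamma, t\sigma}$, the values of $K^{\mu_1}$ on monomials bracketed by $X$'s read back $(\gamma, \sigma)$ by the defining formula for $\nu_\rhd^{\gamma, \sigma}$. Starting from $\{\mu_t\}$, extracting $(\gamma, \sigma)$, and rebuilding $\nu_\rhd^{t\gamma, t\sigma}$, agreement with the original $\mu_t$ follows from uniqueness of solutions of the evolution equation of Proposition~\ref{Prop:Evolution} (equivalently, from $K^{\mu_t} = t K^{\mu_1} = K^{\nu_\rhd^{t\gamma, t\sigma}}$ together with Lemma~\ref{Lemma:Continuity}).
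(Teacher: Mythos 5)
Your construction of the bijection agrees with the paper's, and the forward direction, the self-adjointness of $\gamma$, and the uniqueness argument via cumulant scaling all match the paper's reasoning. Where you diverge is in establishing complete positivity of $\sigma$, which is the only nonroutine point. You route this through the Boolean Bercovici--Pata corollary: use $i\mu_{1/i} \to (\gamma,\sigma)$ to get $\mu_{1/i}^{\uplus i} \to \nu_\uplus^{\gamma,\sigma}$, conclude $\nu_\uplus^{\gamma,\sigma}$ is CP as a limit of CP maps, and then claim that ``inspection of its Boolean cumulants forces $\sigma$ to be CP.'' That last step is the genuine gap: it is exactly the converse direction of the operator-valued Boolean L\'evy--Hin\v{c}in characterization (CP of $\nu_\uplus^{\gamma,\sigma}$ implies CP of $\sigma$), which is a theorem of \cite{Belinschi-Popa-Vinnikov-ID}, not an inspection --- the Boolean cumulants of a CP map need not themselves be positive in any naive sense, and extracting positivity of the ``L\'evy part'' $\sigma$ from positivity of the moments is the whole content of that L\'evy--Hin\v{c}in result. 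As written you neither cite nor prove it, so the argument is incomplete.

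The paper avoids this machinery entirely with a one-line calculation that you describe as ``not directly readable'' but in fact is: for $c_i \in \mc{B}$ and $P_i \in \mc{B}\langle X\rangle$, the element $Q = \sum_i P_i X c_i$ has no constant term, so $\mu_0[Q^\ast Q] = 0$, while $\mu_t[Q^\ast Q] \geq 0$ for all $t$ by complete positivity. Hence
\[
\sum_{i,j} c_i^\ast \sigma[P_i^\ast P_j] c_j = \left.\frac{d}{dt}\right|_{t=0}\mu_t\!\left[\sum_{i,j} c_i^\ast X P_i^\ast P_j X c_j\right] = \lim_{t\downarrow 0}\frac{1}{t}\,\mu_t[Q^\ast Q] \geq 0,
\]
and the same computation with matrix entries gives complete positivity. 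This is strictly more elementary than the Boolean detour and stays entirely within the monotone semigroup. If you want to keep your Boolean route, you must explicitly invoke the full Boolean L\'evy--Hin\v{c}in theorem from \cite{Belinschi-Popa-Vinnikov-ID}; but the direct derivative argument is shorter and is the one the paper uses.
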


\begin{proof}
$\set{ \nu_{\rhd}^{ t \gamma, t \sigma} : t \geq 0}$ form a one-parameter monotone convolution semigroup of completely positive\ $\mc{B}$-bimodule maps. Conversely, if $\set{\mu_t}$ is such a semigroup, define
\[
\gamma = \left.\frac{d}{dt} \right|_{t=0} \mu_t[X] = K^{\mu_1}[X] \in \mc{B}^{sa},
\]
\[
\quad \sigma[b_1 X \ldots b_{n-1}] = \left.\frac{d}{dt} \right|_{t=0} \mu_t[X b_1 X \ldots b_{n-1} X] = K^{\mu_1}[X b_1 X \ldots b_{n-1} X].
\]
Since for $P_i \in \mc{B} \langle X \rangle$ and $c_i \in \mc{B}$,
\[
\sum_{i, j=1}^N c_i^\ast \sigma[P_i^\ast P_j] c_j
= \left.\frac{d}{dt} \right|_{t=0} \mu_t \left[ \sum_{i,j=1}^N c_i^\ast X P_i^\ast P_j X c_j \right]
= \lim_{t \downarrow 0} \frac{1}{t} \mu_t \left[ \sum_{i,j=1}^N c_i^\ast X P_i^\ast P_j X c_j \right] \geq 0,
\]
$\sigma$ is completely positive
\end{proof}

\begin{Remark}
A short calculation shows that
\[
\Phi(b) = \gamma + G_\sigma(b).
\]
This, combined with Theorem~\ref{nevanlinna}, gives an alternative proof of the result in Theorem~\ref{generation} that generators of semigroups arising from $\rhd$-infinitely divisible distributions coincide with the set $\Lambda$. One can also use a standard combinatorial argument to show that $\rhd$-infinitely divisible distributions belong to such one-parameter semigroups. At this point, we do not know how to obtain the more general results in Theorem~\ref{generation} by combinatorial methods.
\end{Remark}

%\section{Unrelated comments on the Nevanlinna representation}.

%If $\mu$ is a completely positive $\mc{B}$-bimodule map, then we know that we have the representation
%\[
%F_\mu(b) = b - \alpha - G_\rho(b).
%\]
%Let $\gamma$ be a completely positive map from $\mc{B}$ to itself. Then an easy computation shows that
%\[
%F_{\gamma \circ \mu}(b) = \left( \gamma \left[ \left( b - \alpha - G_\rho(b) \right)^{-1} \right] \right)^{-1}.
%\]
%Are you sure this can \emph{also} be written in the form $\sigma(b) - \alpha' - G_{\rho'}(b)$? It is of course possible, but seems unlikely.

%\bibliographystyle{amsalpha}
%\bibliography{bibdata}

\appendix

\section{Characterization of general Cauchy transforms}\label{appendix}

In this appendix, we extend the main result in \cite{Williams-Analytic}, namely the classification of the Cauchy transforms associated to distributions $\mu \in \Sigma_{0}$, to the Cauchy transforms associated to more general CP maps.

\begin{theorem}\label{generalization}
The following are equivalent:
\begin{enumerate}[(I)]
\item\label{tfcae1}  The analytic non-commutative function $G = (G^{(n)})_{n \geq 1 } :  H^{+}(\mathcal{B})  \rightarrow  H^{-}(\mathcal{B}) $ has the property that $H =  (H^{(n)})_{n \geq 1 } $ defined through the equalities $H^{(n)}(b) := G^{(n)}(b^{-1})$ for all $n \in \mathbb{N}$ and $b \in M_{n}(\mathcal{B})$ has uniformly analytic extension to a neighborhood of $0$
satisfying $H^{(n)}(0) = 0$.
\item\label{tfcae2} There exists a $\mathbb{C}$-linear map $\sigma : \mathcal{B} \langle X \rangle \rightarrow \mathcal{B}$
satisfying \eqref{bound} and \eqref{CP} such that $G^{(n)}(b) = \sigma((b-X)^{-1})$.
\end{enumerate}
\end{theorem}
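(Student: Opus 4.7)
The plan is to adapt the strategy of Theorem~\ref{nevanlinna} (as carried out in \cite{Williams-Analytic}) to the more general non-bimodular setting.  The direction \eqref{tfcae2} $\Rightarrow$ \eqref{tfcae1} is by direct computation.  Given $\sigma$ satisfying \eqref{bound} and \eqref{CP}, the geometric expansion
\[
H^{(n)}(b) = G^{(n)}(b^{-1}) = \sum_{k \geq 0}(\sigma \otimes 1_n)\bigl(b(X \otimes 1_n \cdot b)^k\bigr)
\]
converges uniformly on $B_r^{nc}(0)$ for $r$ small enough by \eqref{bound}, yielding the uniform analytic extension with $H^{(n)}(0)=0$.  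The identity
\[
\Im G^{(n)}(b) = -(\sigma \otimes 1_n)\bigl((b^*-X)^{-1}\,\Im(b)\,(b-X)^{-1}\bigr),
\]
combined with the factorization $(b^*-X)^{-1}\Im(b)(b-X)^{-1} = P^*P$ for $P = \sqrt{\Im b}\,(b-X)^{-1}$ (approximated by truncated polynomials in $X$) together with \eqref{CP}, shows $\Im G^{(n)}(b) \leq 0$, i.e.\ $G^{(n)}(H^+(\mathcal{B})) \subset H^-(\mathcal{B})$.

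For the more substantive direction \eqref{tfcae1} $\Rightarrow$ \eqref{tfcae2}, I construct $\sigma$ directly from Taylor coefficients of $H$.  For $k \geq 0$ and $b_0,\ldots,b_k \in \mathcal{B}$, let $B \in M_{k+2}(\mathcal{B})$ be the strictly upper triangular matrix with $B_{j,j+1}=b_j$ for $j=0,\ldots,k$ and zeros elsewhere.  A direct matrix computation shows that $B(XB)^\ell$ can have a nontrivial $(0,k+1)$-entry only when $\ell = k$, in which case that entry equals exactly $b_0 X b_1 X \cdots X b_k$.  Consequently, if a $\sigma$ realizing $G$ exists, one must have $\bigl[H^{(k+2)}(tB)\bigr]_{0,k+1} = t^{k+1}\,\sigma(b_0 X b_1 X \cdots X b_k)$ as a monomial in $t$.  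I take this as the \emph{definition} of $\sigma$ on monomials and extend by $\mathbb{C}$-linearity to all of $\mathcal{B}\langle X\rangle$.  The exponential bound \eqref{bound} then follows by applying the vector-valued Cauchy estimates (Theorem~\ref{Cauchy}) to $H^{(k+2)}$ on a ball whose radius is uniform in $k$ by the hypothesis of uniform analyticity.

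The main technical step is verifying complete positivity \eqref{CP}.  Given $P_1,\ldots,P_N \in \mathcal{B}\langle X\rangle$, one must produce, from the $P_i$, a matrix $b \in M_n^+(\mathcal{B})$ whose imaginary-part identity
\[
(\sigma \otimes 1_n)\bigl((b^*-X)^{-1}\Im(b)(b-X)^{-1}\bigr) = -\Im G^{(n)}(b) \geq 0
\]
specializes, after extracting an appropriate coefficient of the power-series expansion, to the matrix $[(\sigma \otimes 1_N)(P_i P_j^*)]_{i,j}$.  This is accomplished by encoding the $P_i$ into the entries of a block-upper-triangular $b$ analogous to the extraction matrix $B$ above; the factorization $(b^*-X)^{-1}\Im(b)(b-X)^{-1} = Q^*Q$ at the level of formal power series then converts this to the desired positivity.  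The main obstacle is that, without the $\mathcal{B}$-bimodularity of \cite{Williams-Analytic}, one cannot pull $\mathcal{B}$-coefficients through $\sigma$, and so a single block matrix $b$ encoding an arbitrary family $\{P_i\}$ must be assembled more carefully at the level of the nc Taylor series; a direct adaptation of the Kantorovich-style argument of \cite{Williams-Analytic} handles this.  Once $\sigma$ is shown to satisfy \eqref{bound} and \eqref{CP}, the identity $G^{(n)}(b) = (\sigma \otimes 1_n)((b-X)^{-1})$ is obtained by comparing power-series coefficients, using that the extraction formula ensures agreement of all Taylor coefficients on both sides.
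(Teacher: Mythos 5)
Your direction \eqref{tfcae2} $\Rightarrow$ \eqref{tfcae1} takes a genuinely different, more elementary route than the paper. The paper invokes Lemma~5.8 of \cite{Popa-Vinnikov-NC-functions} to realize $\sigma$ as $\sigma(P)=\rho_\mu(XPX)$ for a $\boxplus$-infinitely divisible $\mu$, then uses a GNS realization of $\mu$ to read off both the extension of $G$ across $H^+(\mathcal{B})$ and the sign of $\Im G$. Your resolvent factorization
\[
\Im G^{(n)}(b) = -(\sigma\otimes 1_n)\bigl((b-X)^{-1}\,\Im b\,(b^*-X)^{-1}\bigr)
\]
(note you wrote the two resolvent factors in the wrong order, but that is cosmetic) together with complete positivity is indeed a cleaner way to see that $\Im G\leq 0$. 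However, this identity is a priori only an equality of convergent series for $\|b^{-1}\|<1/M$, where the geometric expansion converges; to obtain $G$ as a genuine analytic map on \emph{all} of $H^+(\mathcal{B})$, which the theorem requires, one still needs a dilation argument. A Stinespring/GNS representation $\sigma(\cdot)=V^*\pi(\cdot)V$ with $\|\pi(X)\|\leq M$ furnishes both the extension $G^{(n)}(b)=V^*(b-\pi(X)\otimes 1_n)^{-1}V$ to all of $H^+(\mathcal{B})$ and the desired negativity of the imaginary part; this step is missing from your write-up, and ``approximated by truncated polynomials'' alone does not cross the convergence barrier. With that addition your argument is complete and arguably tidier than the paper's detour through free infinite divisibility.

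For \eqref{tfcae1} $\Rightarrow$ \eqref{tfcae2}, your definition of $\sigma$ via the $(0,k+1)$ entry of $H^{(k+2)}(tB)$ for the strictly upper-triangular $B$ is the same in substance as the paper's definition through $\Delta_{\mathcal R}^{\ell+1}H^{(n)}$ at $0$, and the exponential bound via the vector-valued Cauchy estimates matches the paper's proof. The gap is in the complete positivity step. Your sketch --- encode the $P_i$ into a ``block-upper-triangular $b$'' and read off CP from the formal identity $(b^*-X)^{-1}\Im(b)(b-X)^{-1}=Q^*Q$ --- does not, as stated, yield the matrix $[(\sigma\otimes 1_N)(P_iP_j^*)]_{i,j}$: a strictly upper-triangular matrix cannot lie in $H^+(\mathcal{B})$, and for a genuine $b\in H^+(\mathcal{B})$ the Taylor coefficients of $(b-X)^{-1}\Im b\,(b^*-X)^{-1}$ do not isolate arbitrary $P_iP_j^*$ blocks. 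What the paper actually does is build from a single monomial $P$ (with coefficients $b_i$) a positive scalar matrix $C+\epsilon E_0$, form the scalar Nevanlinna function
\[
G_{f,C,\epsilon}(z)=f_{1,1}\circ\hat g^{(n(\ell+1))}\bigl(z(C+\epsilon E_0)^{-1}\bigr):\mathbb{C}^+\to\mathbb{C}^-,
\]
and use that the \emph{even} moments of the associated positive measure are nonnegative, extracting $f\circ\sigma(PP^*)\geq 0$ from the coefficient of $z^{-(2\ell-1)}$; this requires the auxiliary lemma \eqref{gapbridge} that $\sigma|_{M_n(\mathcal{B})}\geq 0$. None of this appears in your outline, and the deferral to ``a direct adaptation of the Kantorovich-style argument of \cite{Williams-Analytic}'' is a misattribution: the Kantorovich argument there is used to control the exponential bound (as in Proposition~\ref{convergence}), not to establish complete positivity. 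Finally, the remark about losing $\mathcal{B}$-bimodularity is a red herring: neither the appendix's $\sigma$ nor the $\sigma$ in Theorem~4.1 of \cite{Williams-Analytic} is assumed bimodular; the extension in the appendix is really about dropping the normalization $\delta H^{(n)}(0;\cdot)=\mathrm{id}$ (equivalently, allowing $\sigma(1)\neq 1$).
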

\begin{proof}
We begin with \eqref{tfcae2} $\Rightarrow$ \eqref{tfcae1}.  Let $\sigma$ satisfy \eqref{bound} and \eqref{CP}.  By \cite{Popa-Vinnikov-NC-functions}, Lemma $5.8$, we may conclude that there exists a $\boxplus$-infinitely divisible distribution $\mu \in \Sigma_{0}$ such that $\rho_{\mu}(XP(X)X) = \sigma(P(X))$ for all $P(X) \in \mathcal{B}\langle X \rangle$ (here, $\rho_{\mu}$ denotes the free cumulant function associated to $\mu$).
Thus, the Voiculescu transform of $\mu$ satisfies the following equality:
\begin{equation}\label{voiculescu} \varphi_{\mu}^{(n)} (b) = -\sigma ((b-X)^{-1}) \end{equation}
for all $n \in \mathbb{N}$ and where the inverse in the equality is considered as a geometric series, so that the right hand side is convergent for $\| b^{-1} \|$ small enough dependent on \eqref{bound}.  Since $\mu$ is $\boxplus$-infinitely divisible, by Proposition $5.1$ in \cite{Williams-Analytic}, we have that the left hand side of \eqref{voiculescu} extends to $$H^{+}(\mathcal{B}) \cup H^{-}(\mathcal{B}) \bigcup_{n=1}^{\infty} \{ b \in M_{n}(\mathcal{B}) :\| b^{-1} \| < C\} $$ where $C$ is a fixed constant, independent of $n$.

Now, by Proposition $1.2$ in \cite{Popa-Vinnikov-NC-functions}, the fact that $\mu \in \Sigma_{0}$ implies that $\mu$ is realized as the distribution arising from a non-commutative probability space $(\mathcal{A}, E, \mathcal{B})$.  That is, $$\mu(P(X)) = E(P(a))$$
for a fixed self-adjoint element $a \in \mathcal{B}$ and all $P(X) \in \mathcal{B}\langle X \rangle$.
Thus, $\sigma((b-X)^{-1}) = \rho_{\mu}(a(b-a)^{-1}a)$ and, since $b-a \in M_{n}^{+}(\mathcal{B})$ and $ \rho_{\mu}$ is a CP map on $\mathcal{B}\langle X \rangle_{0}$ we may conclude that the $\sigma((b-X)^{-1}) \in M_{n}^{-}(\mathcal{B})$ for all $b \in M_{n}^{+}(\mathcal{B})$.

 Further note that $$ H(b) =  \sigma((b^{-1}-X)^{-1}) = \sum_{k=0}^{\infty} \sigma((bX)^{k}b)$$
is convergent in a neighborhood of zero since $\sigma$ satisfies \eqref{bound}.  It is also immediate that $H(0) = 0$.
This completes one direction of our proof.

We now prove \eqref{tfcae1} $\Rightarrow $ \eqref{tfcae2}.
We will follow the proof of Theorem 4.1 in \cite{Williams-Analytic} and refer to this paper for the appropriate terminology.

We recover our operator $\sigma$ through the differential structure of $H$.  Indeed, we define the map $\sigma$ by letting
$$(\sigma \otimes 1_{n}) (b_{1} (X \otimes 1_{n}) b_{2} \cdots (X \otimes 1_{n}) b_{\ell+1}) := \Delta_{\mathcal{R}}^{\ell + 1}H^{(n)}(\underbrace{0,\ldots,0}_{\ell + 2 \  - \ \text{times}})( b_{1} , b_{2} , \ldots , b_{\ell+1})$$
for elements $b_{1} , b_{2} , \cdots , b_{\ell + 1} \in M_{n}(\mathcal{B})$.  It is a consequence of Proposition $3.1$ in \cite{Williams-Analytic} and \cite{KVV}, Theorem 3.10 that this is a well defined operator.  Moreover, the equality
$$ \Delta_{\mathcal{R}}^{\ell + 1}H^{(n)}(\underbrace{0,\ldots,0}_{\ell + 2 \  - \ \text{times}})( b , b , \ldots , b) = \frac{1}{(\ell + 1)!}
\frac{d^{\ell+1}}{dt^{\ell+1}} H^{(n)}(0 + tb)|_{t=0}$$
 and the fact that the function is analytic in a neighborhood of $0$ implies that
\begin{equation}\label{Hseries}
H^{(n)} (b) = \sum_{k=0}^{\infty} (\sigma \otimes 1_{n}) ((bX)^{k}b)
\end{equation}
once we show that $\sigma$ satisfies \eqref{bound}.
Continuation will allow us to conclude that \begin{equation}\label{Gseries}
G^{(n)}(b) = \sum_{k=0}^{\infty} (\sigma \otimes 1_{n}) ((b^{-1}X)^{k}b^{-1}) = (\sigma \otimes 1_{n}) ((b - X)^{-1}).
\end{equation}

Thus, our theorem will follow when we can show that
$\sigma$ satisfies properties \eqref{bound} and
 \eqref{CP}.

To prove \eqref{bound}, we note that this is equivalent to showing that
 $$\| \sigma(b_{1}Xb_{2} \cdots Xb_{\ell + 1}) \|  \leq CM^{\ell +1}$$
for a fixed $C > 0$, provided that $\| b_{1} \| = \cdots = \| b_{\ell + 1} \| = 1$.   This will follow from uniform analyticity and matches the proof of the same fact in \cite{Williams-Analytic}.  Indeed,
 consider the element of $ M_{\ell + 2}(\mathcal{B})$
$$ B = \left( \begin{array} {cccccc}
0 & b_{1} & 0 & 0 & \cdots & 0 \\
0 & 0 & b_{2} & 0 & \cdots & 0 \\
0 & 0 & 0 & b_{3} & \cdots & 0 \\
\ & \vdots & \ & \ & \vdots & \ \\
0 & 0 & 0 & 0 & \cdots & b_{\ell+1} \\
0 & 0 & 0 & 0 & \cdots &0
\end{array} \right) .$$
Note that $H^{(\ell+1)}$ has a bound of $C$ on a ball of radius $r$ about $0$, independent of $\ell$ since we are assuming that $H$ is uniformly analytic.
Thus, \begin{align*}
      \| \sigma(b_{1}Xb_{2} \cdots X b_{\ell + 1}) \| & = \frac{ \| \delta^{\ell + 1}H^{(\ell + 2)}(0;B) \| }{(\ell + 1)!}\\
& = \| \Delta_{\mathcal{R}}^{\ell + 1}H^{(\ell + 2)} (0 , \ldots , 0)( B , \ldots , B) \| \\
& =  \| r^{-(\ell + 1)} \Delta_{\mathcal{R}}^{\ell + 1} H^{(\ell + 2)} (0 , \ldots , 0 )( r B , \ldots , r B) \| \\
& =   \left( \frac{1}{r} \right)^{\ell + 1} \frac{ \| \delta^{\ell + 1}H^{(\ell + 2)}(0;rB) \| }{(\ell + 1)!} \\
& \leq C \left( \frac{1}{r} \right)^{\ell + 1}
      \end{align*}
 where the last inequality follows from the Cauchy estimates in Theorem~\ref{Cauchy}.

We must prove the technical fact that fact that \begin{equation}\label{gapbridge}
\sigma|_{M_{n}(\mathcal{B})}\geq 0 \end{equation}
Assume that $\sigma(P) < 0$ for some $P \in M_{n}^{+}(\mathcal{B})$ where we can assume that $P > \delta 1$ for some $\delta > 0$.
Note that $G^{(n)}(zP^{-1}) \in M_{n}^{-}(\mathcal{B})$ for all $z\in \mathbb{C}^{+}$  by assumption so that $\lambda G^{(n)}(i\lambda P^{-1}) \in M_{n}^{-}(\mathcal{B})$ for all $\lambda \in \mathbb{R}^{+}$.
Utilizing the series expansion in \eqref{Gseries} as well as the exponential bound that we have just proven, we conclude that the
$$ \lim_{\lambda \uparrow \infty} \lambda G^{(n)}(i\lambda P^{-1}) = \frac{\sigma(P)}{i} = -i\sigma(P) \notin M_{n}^{-}(\mathcal{B}).$$
This contradiction implies \eqref{gapbridge}.

It remains to show \eqref{CP}.  Once again, this will closely follow the proof of the analogous fact in Theorem 4.1 in \cite{Williams-Analytic}.
Indeed, we will first show that \begin{equation}\label{mon_claim} (\sigma\otimes 1_{n}) (P(X \otimes 1_{n} + b_{0})^{\ast} P(X \otimes 1_{n} + b_{0})) \geq 0 \end{equation}
for any monomial $P(X) = b_{1} (X \otimes 1_{n}) b_{2} \cdots X \otimes 1_{n} b_{\ell+ 1} \in M_{n}(\mathcal{B})\langle X \rangle$ and $b_{0} \in  M_{n}(\mathcal{B})$.   We also assume that $|b_{\ell + 1}| > \epsilon 1_{n}$ and the general case follows by letting $\epsilon \downarrow 0$.

Towards this end,
we consider  elements $ C , E_{0}, E_{1} \in M_{n(\ell + 1)}(\mathcal{B})$ defined as follows:
$$ C =  \left( \begin{array}  {ccccccc}
    0 & c_{1} & 0 & 0 & 0 &  \cdots & 0 \\
 c_{1}^{\ast} & 0 & c_{2} & 0 & 0 &\cdots & 0 \\
0 & c_{2}^{\ast} & 0 & c_{3} & 0 & \cdots & 0 \\
\vdots & \ & \ & \vdots & \ & \ & \vdots \\
0 & 0 & \cdots & 0 & c_{\ell - 1}^{\ast} & 0 & c_{\ell} \\
0 & 0 & \cdots & 0 & 0 & c_{\ell}^{\ast} & |c_{\ell + 1}|^{2}
   \end{array} \right)  ; \ \ E_{0} =\underbrace{1_{n} \oplus 1_{n} \oplus \cdots \oplus 1_{n}}_{\ell \ times} \oplus 0_{n}$$
and  $E_{1} = 1_{n(\ell + 1)} - E_{0}$ where $c_{i} = \delta b_{i}$ for $i = 1 , \ldots , \ell$ and $c_{\ell + 1} = b_{\ell + 1}/\delta^{\ell}$ for $\delta > 0$ to be specified.  Note that $b_{1}Xb_{2} \cdots Xb_{\ell + 1} =c_{1}Xc_{2} \cdots Xc_{\ell + 1} $.
We define a function
$$\hat{g}^{n(\ell+1)}(b) := G^{n(\ell+1)}(b - b_{0}) : M_{n(\ell + 1)}^{+}(\mathcal{B}) \rightarrow  M_{n(\ell + 1)}^{-}(\mathcal{B})$$
The following properties are rather trivial and their proof matches those of Theorem 4.1 in \cite{Williams-Analytic}.
\begin{enumerate}[(a)]
\item\label{fact1} $C + \epsilon E_{0} > \gamma 1_{n}$  for some $\gamma > 0$ provided that $\delta > 0$ is small enough.
\item\label{fact2} The $n \times n$ minor in the top left corner of $$[(C + \epsilon E_{0})(X \otimes 1_{n(\ell + 1)} + b_{0} \otimes 1_{\ell + 1})]^{2(\ell-1) } (C + \epsilon E_{0})$$ is equal to $P(X +  b_{0})P^{\ast}(X +  b_{0}) + O(\epsilon)$.
\item\label{fact3} $\hat{g}^{(n(\ell + 1))}(b) = \sum_{p=0}^{\infty} \sigma( [ b^{-1}(X \otimes 1_{n(\ell + 1)} + b_{0} \otimes 1_{\ell + 1}) ]^{p} b^{-1}) $ for $b^{-1}$ in a neighborhood of $0$.
\item\label{fact4} We have that $ z\hat{g}^{(n(\ell + 1))}(z b) \rightarrow \sigma(b^{-1})$ in norm as $|z| \uparrow \infty$ for $b > \gamma1_{n}$.
\item\label{fact5} $\hat{h}^{(n(\ell + 1))}(b):=\hat{g}^{(n(\ell + 1))}(b^{-1})$ has analytic extension to a neighborhood of zero.
\end{enumerate}
The only one of these properties that differs from the proof of Theorem 4.1 in \cite{Williams-Analytic} is \eqref{fact4}.  It follows immediately from the series expansion in \eqref{Hseries}.

We now have the pieces in place to prove \eqref{mon_claim}.
Note that \eqref{fact1} implies that  $C + \epsilon E_{0}$ is  invertible so that  the map $$z \mapsto \hat{g}^{(n(\ell + 1))}(z(C + \epsilon E_{0})^{-1})$$  sends $\mathbb{C}^{+}$ into $M_{n}(\mathcal{B})^{-}$.
Let $B_{i,j} \in M_{n}(\mathcal{B})$ for $i,j = 1 , \ldots , \ell + 1$ and consider the element $B = (B_{i,j})_{i,j = 1}^{\ell + 1} \in M_{n(\ell + 1)}(\mathcal{B})$.
Given a state $f \in M_{n}(\mathcal{B})^{\ast}$ we define a new state $$f_{1,1}(B) := f(B_{1,1}) :   M_{n(\ell + 1)}(B) \rightarrow \mathbb{C}.$$
We may define a map $$G_{f,C,\epsilon}(z) = f_{1,1} \circ g^{(n(\ell + 1))}(z(C + \epsilon E_{0})^{-1}) : \mathbb{C}^{+} \rightarrow \mathbb{C}^{-}.$$
Properties \eqref{fact3} and \eqref{fact4} imply the following for $z \in \mathbb{C}^{+}$:
\begin{align*}
\lim_{|z| \uparrow \infty} z G_{f,C,\epsilon}(z) = & \lim_{|z| \uparrow \infty} f_{1,1} \left[  z g^{(n(\ell+1))}(z(C + \epsilon E_{0})^{-1} )  \right] \\
& = f_{1,1}( \sigma( C + \epsilon E_{0}) ) \geq 0
\end{align*}
where the last inequality will follow from the fact that  $f_{1,1}$ is a state, property \eqref{fact1}
and \eqref{gapbridge}.

Now, observe that the coefficient of $z^{-2\ell + 1}$ in the function $G_{f,C,\epsilon}$ is equal to $\rho(t^{2(\ell-1)}) > 0$.
Furthermore, since \begin{align*}
G_{f,C,\epsilon}(z) & = G_{\rho}(z) = \sum_{\ell = 0}^{\infty} \frac{\rho(t^{\ell})}{z^{\ell + 1}} \\
& = \sum_{\ell = 0}^{\infty} \frac{f_{1,1}(\sigma([(C + \epsilon E_{0})(X \otimes 1_{n(\ell + 1)} + b_{0})]^{\ell} (C + \epsilon E_{0}) ))}{z^{\ell + 1}}
\end{align*}
we may conclude that $$f_{1,1} \circ \sigma([(C+ \epsilon E_{0})(X \otimes 1_{n(\ell + 1)} + b_{0})]^{2(\ell-1)} (C + \epsilon E_{0}) )= \rho(t^{2(\ell - 1)}) \geq 0.$$
Recalling \eqref{fact2}, it follows that $f \circ \sigma ( [P(X + b_{0})P^{\ast}(X +  b_{0}) + O(\epsilon)]) \geq 0$.  Letting $\epsilon \downarrow 0$ and noting that $f$ was an arbitrary state, we have proven that
$$ (\sigma \otimes 1_{n}) ( P(X + b_{0})P^{\ast}(X +  b_{0}) ) \geq 0$$
for any monomial $P(X) \in M_{n}(\mathcal{B})\langle X \rangle$.

The extension from the case of monomials to general elements in $\mathcal{B}\langle X \rangle$ follows the proof in \cite{Williams-Analytic} exactly so we will refrain from repeating it.  This implies \eqref{CP} and, therefore, our theorem.

\end{proof}

%\bibliographystyle{amsalpha}
%\bibliography{bibdata}

\def\cprime{$'$} \def\cprime{$'$}
\providecommand{\bysame}{\leavevmode\hbox to3em{\hrulefill}\thinspace}
\providecommand{\MR}{\relax\ifhmode\unskip\space\fi MR }
% \MRhref is called by the amsart/book/proc definition of \MR.
\providecommand{\MRhref}[2]{%
  \href{http://www.ams.org/mathscinet-getitem?mr=#1}{#2}
}
\providecommand{\href}[2]{#2}

\end{document}